 \newcounter{enunciato}[subsection]
 \newtheorem{ittheorem}{Theorem}
 \newtheorem{itlemma}{Lemma}
 \newtheorem{itproposition}{Proposition}
 \newtheorem{itdefinition}{Definition}
 \newtheorem{itremark}{Remark}
 \newtheorem{itclaim}{Claim}
 \newtheorem{itfact}{Fact}
 \newtheorem{itconjecture}{Conjecture}
 \newenvironment{theorem}{\addtocounter{enunciato}{1}
 \begin{ittheorem}}{\end{ittheorem}}
 \newenvironment{lemma}{\addtocounter{enunciato}{1}
 \begin{itlemma}}{\end{itlemma}}
 \newenvironment{proposition}{\addtocounter{enunciato}{1}
 \begin{itproposition}}{\end{itproposition}}
 \newenvironment{definition}{\addtocounter{enunciato}{1}
 \begin{itdefinition}}{\end{itdefinition}}
 \newenvironment{remark}{\addtocounter{enunciato}{1}
 \begin{itremark}}{\end{itremark}}
 \newenvironment{claim}{\addtocounter{enunciato}{1}
 \begin{itclaim}}{\end{itclaim}}
 \newenvironment{fact}{\addtocounter{enunciato}{1}
 \begin{itfact}}{\end{itfact}}
 \newenvironment{conjecture}{\addtocounter{enunciato}{1}
 \begin{itconjecture}}{\end{itconjecture}}
 \newcommand{\be}[1]{\begin{equation}\label{#1}}
 \newcommand{\ee}{\end{equation}}
 \newcommand{\bl}[1]{\begin{lemma}\label{#1}}
 \newcommand{\el}{\end{lemma}}
 \newcommand{\br}[1]{\begin{remark}\label{#1}}
 \newcommand{\er}{\end{remark}}
 \newcommand{\bt}[1]{\begin{theorem}\label{#1}}
 \newcommand{\et}{\end{theorem}}
 \newcommand{\bd}[1]{\begin{definition}\label{#1}}
 \newcommand{\ed}{\end{definition}}
 \newcommand{\bcl}[1]{\begin{claim}\label{#1}}
 \newcommand{\ecl}{\end{claim}}
 \newcommand{\bfact}[1]{\begin{fact}\label{#1}}
 \newcommand{\efact}{\end{fact}}
 \newcommand{\bp}[1]{\begin{proposition}\label{#1}}
 \newcommand{\ep}{\end{proposition}}
 \newcommand{\bc}[1]{\begin{corollary}\label{#1}}
 \newcommand{\ec}{\end{corollary}}
 \newcommand{\bcj}[1]{\begin{conjecture}\label{#1}}
 \newcommand{\ecj}{\end{conjecture}}
 \newcommand{\bpr}{\begin{proof}}
 \newcommand{\epr}{\end{proof}}
 \newcommand{\bi}{\begin{itemize}}
 \newcommand{\ei}{\end{itemize}}
 \newcommand{\ben}{\begin{enumerate}}
 \newcommand{\een}{\end{enumerate}}
\def \da {\downarrow}
\newcommand{\qed}{\hspace*{\fill} $\blacksquare$\medskip}
\newenvironment{proof}
{\noindent {\em Proof}.\,\,}{\qed}
\def \b {\beta}
\def \h {\eta}
\def \t {\tau}
\def \d {\delta}
\def \Z {\mathbb Z}
\def \N {\mathbb N}
\def \Z {{\mathbb Z}}
\def \P {{\mathbb P}}
\def \E {{\mathbb E}}
\def \cX {\mathcal X}
\def \cR {\mathcal R}
\def \cC {\mathcal C}
\def \cP {\mathcal P}
\def \cI {\mathcal I}
\def \cB {\mathcal B}
\def \cF {\mathcal F}
\def \cE {\mathcal E}
\def \cA {\mathcal A}
\def \cB {\mathcal B}
\def \cS {\mathcal S}
\def \cW {\mathcal W}
\def \cG {\mathcal G}
\def \cL {\mathcal L}
\def \starred {^{\star}}
\def \dstarred {^{\star\star}}
\def \Box {\square}
\def \Boxplus {\boxplus}
\def \CAPA {{\rm CAP}}
\def \CS {{\rm CS}}
\def \supp {\mathrm{supp}}
\def \metaset	{\cX_\mathrm{meta}}
\def \groundset {\cX_\mathrm{stab}}
\newcommand 	{\stablev}[1] {V_{#1}}
\def \comlev 	{\varPhi}
\newcommand 	{\lowset}[1] {\cI_{#1}}
\def \gate {\cC^{\star}}
\def \proto {\cP}
\def \entgate {\gate_\mathrm{bd}}
\def \ta {1}
\def \tb {2}
\def \na {n_{\ta}}
\def \nb {n_{\tb}}
\def \Da {\Delta_1}
\def \Db {\Delta_2}
\begin{document}

\author{
\renewcommand{\thefootnote}{\arabic{footnote}}
F.\ den Hollander \footnotemark[1]\,\,\,\,\footnotemark[2]
\\
\renewcommand{\thefootnote}{\arabic{footnote}}
F. R.\ Nardi \footnotemark[3]\,\,\,\,\footnotemark[2]
\\
\renewcommand{\thefootnote}{\arabic{footnote}}
A.\ Troiani \footnotemark[1]
}

\title{Metastability for Kawasaki dynamics at\\
low temperature with two types of particles}

\footnotetext[1]{
Mathematical Institute, Leiden University, P.O.\ Box 9512,
2300 RA Leiden, The Netherlands
}
\footnotetext[2]{
EURANDOM, P.O.\ Box 513, 5600 MB Eindhoven, The Netherlands
}
\footnotetext[3]{
Technische Universiteit Eindhoven, P.O.\ Box 513, 5600 MB Eindhoven,
The Netherlands
}

\maketitle

\begin{abstract}

This is the first in a series of three papers in which we study a two-dimensional
lattice gas consisting of two types of particles subject to Kawasaki dynamics at
low temperature in a large finite box with an open boundary. Each pair of particles
occupying neighboring sites has a negative binding energy provided their types are
different, while each particle has a positive activation energy that depends on
its type. There is no binding energy between neighboring particles of the same type.
At the boundary of the box particles are created and annihilated in a way that
represents the presence of an infinite gas reservoir. We start the dynamics from 
the empty box and compute the transition time to the full box. This transition is 
triggered by a \emph{critical droplet} appearing somewhere in the box.

We identify the region of parameters for which the system is metastable. For this
region, in the limit as the temperature tends to zero, we show that the first
entrance distribution on the set of critical droplets is uniform, compute the
expected transition time up to a multiplicative factor that tends to one, and 
prove that the transition time divided by its expectation is exponentially distributed. 
These results are derived under \emph{three hypotheses} on the energy landscape, 
which are verified in the second and the third paper for a certain subregion of 
the metastable region. These hypotheses involve three model-dependent quantities --
the energy, the shape and the number of the critical droplets -- which are identified 
in the second and the third paper as well.

\vskip 0.5truecm
\noindent
{\it MSC2010.} 60K35, 82C26.\\
{\it Key words and phrases.} Multi-type particle systems, Kawasaki dynamics,
metastable region, metastable transition time, critical droplet, potential
theory, Dirichlet form, capacity.\\
{\it Acknowledgment.} The authors are grateful to Anton Bovier and Gabriele dalla 
Torre for fruitful discussions.

\end{abstract}

%%%%%%%%%%%%%%%%%%%%%%%%%%%%%%%%%%%%%%%%%%%%%%%%%%%%%%%%%%%%%%%%%%%%%%%%%%%%%%%

\newpage

\section{Introduction and main results}
\label{S1}

The main motivation behind this work is to understand metastability of \emph{multi-type
particle systems} subject to \emph{conservative stochastic dynamics}. In the past ten
years a good understanding was achieved of the metastable behavior of the lattice 
gas subject to Kawasaki dynamics, i.e., random hopping of particles of a single type 
with hardcore repulsion and nearest-neighbor attraction. The analysis was based on a 
combination of techniques coming from large deviation theory, potential theory, geometry 
and combinatorics. In particular, a precise description was obtained of the time to 
nucleation (from the ``gas phase'' to the ``liquid phase''), the critical droplet triggering 
the nucleation, and the typical nucleation path, i.e., the typical growing and shrinking 
of droplets. For an overview we refer the reader to two recent papers presented at the 
12th Brazilian School of Probability: Gaudilli\`ere and Scoppola~\cite{GSnotes} and 
Gaudilli\`ere~\cite{Gnotes}. For an overview on metastability and droplet growth in a 
broader context, we refer the reader to the monograph by Olivieri and Vares~\cite{OV04}, 
and the review papers by Bovier~\cite{B09}, \cite{B11}, den Hollander~\cite{dH09}, 
Olivieri and Scoppola~\cite{OS10}.  

It turns out that for systems with two types of particles, as considered in the present
paper, the \emph{geometry} of the energy landscape is much more complex than for one 
type of particle. Consequently, it is a somewhat delicate matter to capture the proper 
mechanisms behind the growing and shrinking of droplets. Our proofs in the present 
paper use potential theory and rely on ideas developed in Bovier, den Hollander and 
Nardi~\cite{BdHN06} for Kawasaki dynamics with one type of particle. Our target is 
to identify the \emph{minimal} hypotheses that lead to metastable behavior. We will
argue that these hypotheses, stated in the context of our specific model,
 also suffice for Kawasaki dynamics with more than two
types of particles and are robust against variations of the interaction.

The model studied in the present paper falls in the class of variations on Ising spins
subject to Glauber dynamics and lattice gas particles subject to Kawasaki dynamics.
These variations include Blume--Capel, anisotropic interactions, staggered magnetic 
field, next-nearest-neighbor interactions, and probabilistic cellular automata. 
In all these models the geometry of the 
energy landscape is complex and needs to be controlled in order to arrive at a 
complete description of metastability. For an overview, see the monograph by Olivieri 
and Vares~\cite{OV04}, chapter 7.

Section~\ref{S1.1} defines the model, Section~\ref{S1.2} introduces basic notation,
Section~\ref{S1.3} identifies the metastable region, while Section~\ref{S1.4} states
the main theorems. Section~\ref{S1.5} discusses the main theorems, \emph{places them 
in their proper context and provides further motivation}. Section~\ref{S1.6} proves 
three geometric lemmas that are needed in the proof of the main theorems, which is
provided in Section~\ref{S2}.

%%%%%%%%%%%%%

\subsection{Lattice gas subject to Kawasaki dynamics}
\label{S1.1}

Let $\Lambda \subset \Z^2$ be a large finite box. Let
\begin{equation}
\label{boxinout}
\begin{aligned}
\partial\Lambda^- &= \{x\in\Lambda\colon\,\exists\,y\notin\Lambda\colon\,|y-x|=1\},\\
\partial\Lambda^+ &= \{x\notin\Lambda\colon\,\exists\,y\in\Lambda\colon\,|y-x|=1\},
\end{aligned}
\end{equation}
be the internal boundary, respectively, the external boundary of $\Lambda$, and put
$\Lambda^-=\Lambda\backslash\partial\Lambda^-$ and $\Lambda^+=\Lambda\cup\partial\Lambda^+$.
With each site $x\in\Lambda$ we associate a variable $\eta(x) \in \{0,1,2\}$ indicating
the absence of a particle or the presence of a particle of type $\ta$ or type $\tb$,
respectively. A configuration $\eta=\{\eta(x)\colon\,x\in\Lambda\}$ is an element of $\cX
=\{0,1,2\}^\Lambda$. To each configuration $\eta$ we associate an energy given by
the Hamiltonian
\begin{equation}
\label{Ham1}
H(\eta) = -U \sum_{(x,y)\in(\Lambda^-)\starred} 1_{\{\eta(x)\eta(y)=2\}}\\
+ \Da \sum_{x\in\Lambda} 1_{\{\eta(x)=1\}}
+ \Db \sum_{x\in\Lambda} 1_{\{\eta(x)=2\}},
\end{equation}
where $(\Lambda^-)\starred=\{(x,y)\colon\,x,y\in\Lambda^-,|x-y|=1\}$ is the set of
non-oriented bonds inside $\Lambda^-$ (with $|\cdot|$ the Euclidean norm), $-U<0$ is 
the \emph{binding energy} between neighboring particles of \emph{different} types inside 
$\Lambda^-$, and $\Da>0$ a
nd $\Db>0$ are the \emph{activation energies} of particles of 
type $\ta$, respectively, type $\tb$ inside $\Lambda$. Without loss of generality we will 
assume that
\begin{equation}
\label{delineq}
\Da \leq \Db.
\end{equation}
The Gibbs measure associated with $H$ is
\begin{equation}
\label{Gibbs}
\mu_\beta(\eta) = \frac{1}{Z_\beta}\,e^{-\beta H(\eta)}, \qquad \eta\in\cX,
\end{equation}
where $\beta\in (0,\infty)$ is the inverse temperature, and $Z_\beta$ is the normalizing
partition sum.

\emph{Kawasaki dynamics} is the continuous-time Markov process $(\eta_t)_{t \geq 0}$ with
state space $\cX$ whose transition rates are
\begin{equation}
\label{rate}
c_\beta(\eta,\eta') = \left\{\begin{array}{ll}
e^{-\beta [H(\eta')-H(\eta)]_+}, &\eta,\eta'\in\cX,\,\eta \sim \eta',\\
0, &\text{otherwise},
\end{array}
\right.
\end{equation}
(i.e., Metroplis rate w.r.t.\ $\beta H$), where $\eta\sim\eta'$ means that $\eta'$ can be 
obtained from $\eta$ and vice versa by one of the following moves:
\begin{itemize}
\item[$\bullet$]
interchanging the states $0 \leftrightarrow 1$ or $0 \leftrightarrow 2$ at neighboring
sites in $\Lambda$\\
(``hopping of particles inside $\Lambda$''),
\item[$\bullet$]
changing the state $0 \rightarrow 1$, $0 \rightarrow 2$, $1 \rightarrow 0$ or
$2 \rightarrow 0$ at single sites in $\partial^- \Lambda$\\
(``creation and annihilation of particles inside $\partial^- \Lambda$'').
\end{itemize}
This dynamics is ergodic and reversible with respect to the Gibbs measure $\mu_\beta$, i.e.,
\begin{equation}
\mu_\b(\eta)c_\beta(\eta,\eta') = \mu_\beta(\eta')c_\beta(\eta',\eta) \qquad 
\forall\,\eta,\eta'\in\cX.
\end{equation}
Note that particles are preserved in $\Lambda^-$, but can be created and annihilated 
in $\partial^-\Lambda$. Think of the particles entering and exiting $\Lambda$ along 
non-oriented edges between $\partial^-\Lambda$ and $\partial^+\Lambda$ (where we allow 
only one edge for each site in $\partial^-\Lambda$). The pairs $(\eta,\eta')$ with 
$\eta\sim\eta'$ are called \emph{communicating configurations}, the transitions 
between them are called \emph{allowed moves}. Note that particles in $\partial^-\Lambda$ 
do not interact with particles anywhere in $\Lambda$ (see \eqref{Ham1}).

The dynamics defined by (\ref{Ham1}) and (\ref{rate}) models the behavior inside
$\Lambda$ of a lattice gas in $\Z^2$, consisting of two types of particles subject
to random hopping with hard core repulsion and with binding between different
neighboring types. We may think of $\Z^2\backslash\Lambda$ as an \emph{infinite reservoir}
that keeps the particle densities inside $\Lambda$ fixed at $\rho_\ta=e^{-\beta\Da}$ 
and $\rho_\tb=e^{-\beta\Db}$. In our model this reservoir is replaced by an \emph{open
boundary} $\partial^-\Lambda$, where particles are created and annihilated at a rate that
matches these densities. Consequently, our Kawasaki dynamics is a \emph{finite-state}
Markov process.

Note that there is \emph{no} binding energy between neighboring particles of the
\emph{same} type. Consequently, the model does \emph{not} reduce to Kawasaki 
dynamics for one type of particle when $\Da = \Db$. Further note that, whereas 
Kawasaki dynamics for one type of particle can be interpreted as swaps of occupation
numbers along edges, such an interpretation is not possible here. 

%%%%%%%%%%%%%%%%%%%%

\subsection{Notation}
\label{S1.2}

To identify the metastable region in Section~\ref{S1.3} and state our main theorems
in Section~\ref{S1.4}, we need some notation.

\begin{definition}
\label{def1}
(a) $n_i(\eta)$ is the number of particles of type $i=\ta,\tb$ in $\eta$.\\
(b) $B(\eta)$ is the number of bonds in $(\Lambda^-)\starred$ connecting neighboring particles 
of different type in $\eta$, i.e., the number of active bonds in $\eta$.\\
(c) A droplet is a maximal set of particles connected by active bonds.\\
(d) $\Box$ is the configuration where $\Lambda$ is empty, $\Boxplus$ is the configuration 
where $\Lambda$ is filled as a checkerboard (see Remark~{\rm \ref{bdeff}} below).\\
(e) $\omega\colon\,\eta\to\eta'$ is any path of allowed moves from $\eta$ to $\eta'$.\\
(f) $\tau_\cA$ = $\inf\{t\geq 0\colon\,\eta_t \in \cA,\,\exists\,0<s<t\colon \eta_s\notin \cA\}$, 
$\cA\subset\cX$, is the first hitting/return time of $\cA$.\\
(g) $\P_\eta$ is the law of $(\eta_t)_{t \geq 0}$ given $\eta_0=\eta$.
\end{definition}

\begin{definition}
\label{def2}
(a) $\comlev(\eta,\eta')$ is the communication height between $\eta,\eta'\in\cX$
defined by
\begin{equation}
\comlev(\eta,\eta') = \min_{\omega\colon\,\eta\rightarrow\eta'}
\max_{\xi\in\omega} H(\xi),
\end{equation}
and $\comlev(\cA,\cB)$ is its extension to non-empty sets $\cA,\cB\subset\cX$ defined by
\begin{equation}
\comlev(\cA,\cB) = \min_{\eta\in \cA,\eta'\in \cB} \comlev(\eta,\eta').
\end{equation}
(b) $\cS(\eta,\eta')$ is the communication level set between $\eta$ and $\eta'$
defined by
\begin{equation}
\cS(\eta,\eta') = \left\{\zeta\in\cX\colon\,\exists\,\omega\colon\,\eta\to\eta',\,
\omega\ni\zeta\colon\,\max_{\xi\in\omega} H(\xi) = H(\zeta) = \Phi(\eta,\eta')\right\}.
\end{equation}
(c) $\stablev{\eta}$ is the stability level of $\eta\in\cX$ defined by
\begin{equation}
\stablev{\eta} = \comlev({\eta},\lowset{\eta})- H(\eta),
\end{equation}
where $\lowset{\eta}=\{\xi\in\cX\colon\,H(\xi)<H(\eta)\}$ is the set of
configurations with energy lower than $\eta$.\\
(d) $\groundset=\{\eta\in\cX\colon\,H(\eta)=\min_{\xi\in\cX} H(\xi)\}$
is the set of stable configurations, i.e., the set of configurations with minimal
energy.\\
(e) $\metaset=\{\eta\in\cX\colon\,V_\eta = \max_{\xi\in\cX\backslash\groundset}
V_\xi\}$ is the set of metastable configurations, i.e., the set of non-stable
configurations with maximal stability level.\\
(f) $\Gamma=\stablev{\eta}$ for $\eta\in\metaset$ (note that $\eta\mapsto\stablev{\eta}$
is constant on $\metaset$), $\Gamma\starred=\comlev(\Box,\Boxplus)-H(\Box)$ (note that
$H(\Box)=0$).
\end{definition}

\begin{definition}
\label{def3}
(a) $(\eta\to\eta')_\mathrm{opt}$ is the set of paths realizing the minimax in
$\Phi(\eta,\eta')$.\\
(b) A set $\cW\subset\cX$ is called a gate for $\eta\to\eta'$ if $\cW\subset
\cS(\eta,\eta')$ and $\omega\cap\cW\neq\emptyset$ for all $\omega\in
(\eta\to\eta')_\mathrm{opt}$.\\
(c) A set  $\cW\subset\cX$ is called a minimal gate for $\eta\to\eta'$ if
it is a gate for $\eta\to\eta'$ and for any $\cW'\subsetneq\cW$ there exists an
$\omega' \in (\eta\to\eta')_\mathrm{opt}$ such that $\omega'\cap\cW'=\emptyset$.\\
(d) A priori there may be several (not necessarily disjoint) minimal gates.
Their union is denoted by $\cG(\eta,\eta')$ and is called the essential gate
for $(\eta\to\eta')_\mathrm{opt}$. (The configurations in $\cS(\eta,\eta')\backslash
\cG(\eta,\eta')$ are called dead-ends.)
\end{definition}

Definitions~\ref{def2}--\ref{def3} are canonical in metastability theory and 
are formalized in Manzo, Nardi, Olivieri and Scoppola~\cite{MNOS04}.

%%%%%%%%%%%%%%%%%%%%

\subsection{Metastable region}
\label{S1.3}

We want to understand how the system tunnels from $\Box$ to $\Boxplus$ when the former
is a local minimum and the latter is a global minimum of $H$. We begin by identifying the
\emph{metastable region}, i.e., the region in parameter space for which this is the case.

\begin{lemma}
\label{lmetreg}
The condition $\Da+\Db<4U$ is necessary and sufficient for $\Box$ to be
a local minimum but not a global minimum of $H$.
\end{lemma}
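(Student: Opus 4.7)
The plan is to split the iff into the local-minimum part (which holds for free) and the global-vs-not-global part (which is the actual content). From $\Box$ the only allowed moves under the Kawasaki dynamics are single creations in $\partial^-\Lambda$; each such move produces a configuration with one isolated particle, hence energy $\Delta_1>0$ or $\Delta_2>0$, strictly greater than $H(\Box)=0$. So $\Box$ is always a strict local minimum, regardless of the relation between $U,\Delta_1,\Delta_2$, and one never needs the hypothesis for this direction.

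For the global-minimum part, I would introduce the auxiliary counts $n_i'(\eta)$, the number of type-$i$ particles in $\Lambda^-$, so that $n_i(\eta)\geq n_i'(\eta)$ and hence
$$H(\eta)=-UB(\eta)+\Delta_1 n_1(\eta)+\Delta_2 n_2(\eta)\;\geq\;-UB(\eta)+\Delta_1 n_1'(\eta)+\Delta_2 n_2'(\eta).$$
The key combinatorial inequality, which I expect to be the only real step, is
$$B(\eta)\;\leq\;4\min\{n_1'(\eta),n_2'(\eta)\}.$$
It follows because every active bond lies in $(\Lambda^-)^\star$ with exactly one endpoint of each type, and each particle has at most four neighbors; summing over type-1 (resp.\ type-2) particles therefore upper-bounds $B$ by $4n_1'$ (resp.\ $4n_2'$).

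Assuming $\Delta_1+\Delta_2\geq 4U$ and WLOG $n_1'\leq n_2'$, the above gives
$$UB(\eta)\;\leq\;4U n_1'\;\leq\;(\Delta_1+\Delta_2)n_1'\;\leq\;\Delta_1 n_1'+\Delta_2 n_2',$$
so $H(\eta)\geq 0=H(\Box)$ for every $\eta$, i.e.\ $\Box$ is a global minimum. This is the ``only if'' direction.

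For the ``if'' direction, assuming $\Delta_1+\Delta_2<4U$, I would exhibit one configuration with strictly negative energy. Place a checkerboard pattern on an $L\times L$ sub-box of $\Lambda^-$ (possible since $\Lambda$ is assumed large), leaving the rest empty. This yields $L^2$ particles, roughly $L^2/2$ of each type, and $2L(L-1)$ active bonds, so
$$H \;=\; -2UL(L-1) + \tfrac{L^2}{2}(\Delta_1+\Delta_2) + O(L) \;=\; L^2\!\left[\tfrac{\Delta_1+\Delta_2}{2}-2U\right]+O(L).$$
Under the hypothesis the leading coefficient is negative, so for $L$ large enough $H<0=H(\Box)$ and $\Box$ fails to be global. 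The main obstacle is really just the counting bound $B\leq 4\min\{n_1',n_2'\}$; everything else is algebra plus the standing assumption that $\Lambda$ is large enough to host a checkerboard droplet of any needed side length.
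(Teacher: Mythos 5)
Your proposal is correct and follows essentially the same route as the paper: the same counting bound $B(\eta)\leq 4\,(\text{number of particles of one type})$ for the direction $\Da+\Db\geq 4U\Rightarrow H\geq 0$, and the same checkerboard droplet of large side length as the negative-energy witness when $\Da+\Db<4U$. The only (cosmetic) difference is that you use $B\leq 4\min\{n_1',n_2'\}$ directly, whereas the paper first invokes the type-interchange symmetry together with $\Da\leq\Db$ to reduce to $n_1\geq n_2$ and then applies $B\leq 4n_2$; both yield the same estimate.
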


\begin{proof}
Note that $H(\Box)=0$. We know that $\Box$ is a local minimum of $H$, since as soon
as a particle enters $\Lambda$ we obtain a configuration with energy either $\Da>0$
or $\Db>0$. To show that there is a configuration $\hat\eta$ with $H(\hat\eta)<0$, we
write
\begin{equation}
H(\eta) = \na(\eta)\Da + \nb(\eta) \Db - B(\eta) U.
\end{equation}
Since $\Da \leq \Db$, we may assume without loss of generality that $\na(\eta)\geq\nb(\eta)$. 
Indeed, if $\na(\eta)<\nb(\eta)$, then we simply take the configuration $\eta^{\ta
\Leftrightarrow\tb}$ obtained from $\eta$ by interchanging the types $\ta$ and $\tb$, i.e.,
\begin{equation}
\label{12map}
\eta^{\ta \Leftrightarrow \tb}(x) =	
\begin{cases}
\ta & \text{if } \eta(x) = \tb, \\
\tb & \text{if } \eta(x) = \ta, \\
0 & \text{otherwise},
\end{cases}
\end{equation}
which satisfies $H(\eta^{\ta \Leftrightarrow \tb}) \leq H(\eta)$.

Since $B(\eta) \leq 4\nb(\eta)$, we have
\begin{equation}
H(\eta) \geq \na(\eta)\Da + \nb(\eta)\Db - 4\nb(\eta)U \geq \nb(\eta)(\Da+\Db-4U).
\end{equation}
Hence, if $\Da+\Db \geq 4U$, then $H(\eta) \geq 0$ for all $\eta$ and $H(\Box)=0$
is a global minimum. On the other hand, consider a configuration $\hat\eta$ such
that $\na(\hat\eta) = \nb(\hat\eta)$ and $\na(\hat\eta) + \nb(\hat\eta) = \ell^2$
for some $\ell\in 2\N$. Arrange the particles of $\hat\eta$ in a checkerboard square
of side length $\ell$. Then a straightforward computation gives
\begin{equation}
H(\hat{\eta}) =\tfrac12\ell^2\Da+\tfrac12\ell^2\Db-2\ell(\ell-1)U,
\end{equation}
and so
\begin{equation}
H(\hat{\eta})<0 \Longleftrightarrow \ell^2(\Da+\Db)<4\ell(\ell-1)U
\Longleftrightarrow \Da+\Db < (4 - 4\ell^{-1})U.
\end{equation}
Hence, if $\Da+\Db<4U$, then there exists an $\bar\ell\in2\N$ such that $H(\hat\eta)<0$
for all $\ell\in 2\N$ with $\ell\geq\bar\ell$. Here, $\Lambda$ must be taken large
enough, so that a droplet of size $\bar\ell$ fits inside $\Lambda^-$.
\end{proof}

\noindent
Note that $\Gamma\starred=\Gamma\starred(U,\Da,\Db) \in (0,\infty)$ because of 
Lemma~\ref{lmetreg}.

Within the metastable region $\Da+\Db<4U$, we may as well exclude the subregion $\Da,\Db<U$
(see Fig.~\ref{fig-propmetreg}). In this subregion, each time a particle of type $\ta$ 
enters $\Lambda$ and attaches itself to a particle of type $\tb$ in the droplet, or 
vice versa, the energy goes down. Consequently, the ``critical droplet'' for the 
transition from $\Box$ to $\Boxplus$ consists of only two free particles, one of type 
$\ta$ and one of type $\tb$. Therefore this subregion does not exhibit proper metastable 
behavior.

%%%%%%%%%%%%%%%%%%%%%%%%%%%%%%%%%%%%%%%%%%%%%%%%%%%%%%%%%%%%%%%%%%%%%%%%%%%%%%%%%
\begin{figure}[H]
\begin{centering}
{\includegraphics[width=0.35\textwidth]{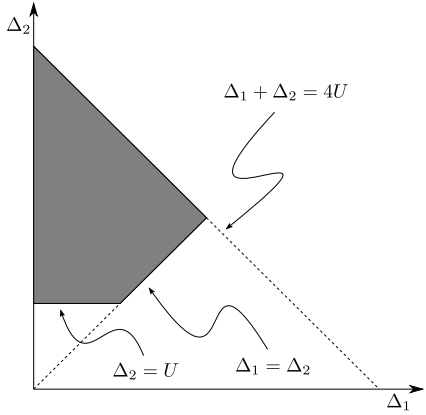}}
\par\end{centering}
\caption{Proper metastable region.}
\label{fig-propmetreg}
\end{figure}
%%%%%%%%%%%%%%%%%%%%%%%%%%%%%%%%%%%%%%%%%%%%%%%%%%%%%%%%%%%%%%%%%%%%%%%%%%%%%%%%%%%%%%%%%

%%%%%%%%%%%%%%%%%%%%%%%%%%%%%%%%%%%%%%%%%%%%%%%%%%%%%%%%%%%%%%%%%%%%%%%%%%%%%%%%%%%%%%%%%%

\subsection{Main theorems}
\label{S1.4}

Theorems~\ref{tgate}--\ref{texp} below will be proved in the \emph{metastable region} 
subject to the following \emph{hypotheses}:
\begin{itemize}
\item[(H1)]
$\groundset=\Boxplus$.
\item[(H2)]
There exists a $V^\star<\Gamma\starred$ such that $V_{\eta}\leq V^\star$ for all $\eta\in
\cX\backslash\{\Box,\Boxplus\}$.
\end{itemize}
The third hypothesis consists of three parts characterizing the entrance set of $\cG(\Box,
\boxplus)$, the set of critical droplets, and the exit set of $\cG(\Box,\boxplus)$. To 
formulate this hypothesis some further definitions are needed.

\begin{definition}
\label{defdroplets-a}
(a) $\entgate$ is the minimal set of configurations in $\cG(\Box,\Boxplus)$ such that all 
paths in $(\Box\to\Boxplus)_\mathrm{opt}$ enter $\cG(\Box,\Boxplus)$ through $\entgate$.\\
(b) $\proto$ is the set of configurations visited by these paths just prior to their first
entrance of $\cG(\Box,\Boxplus)$.
\end{definition}

\begin{itemize}
\item[(H3-a)]
Every $\hat{\eta}\in\proto$ consists of a \emph{single droplet} somewhere in $\Lambda^-$. 
This single droplet fits inside an $L\starred \times L\starred$ square somewhere in 
$\Lambda^-$ for some $L\starred \in \N$ large enough that is independent of $\hat{\eta}$ 
and $\Lambda$. Every $\eta\in\entgate$ consists of a single droplet $\hat{\eta}\in\proto$ 
and a \emph{free particle} of type $\tb$ somewhere in $\partial^-\Lambda$.
\end{itemize}

\begin{definition}
\label{defdroplets-b}
(a) $\gate_\mathrm{att}$ is the set of configurations obtained from $\proto$ by attaching 
a particle of type $\tb$ to the single droplet, and decomposes as $\gate_\mathrm{att} 
= \cup_{\hat{\eta}\in\proto} \gate_\mathrm{att}(\hat{\eta})$.\\
(b) $\gate$ is the set of configurations obtained from $\proto$ by adding a free particle 
of type $\tb$ somewhere in $\Lambda$, and decomposes as $\gate = \cup_{\hat{\eta}\in\proto} 
\gate(\hat{\eta})$. 
\end{definition}

Note that $\Gamma\starred=H(\gate)=H(\proto)+\Db$, and that $\gate$ consists of precisely 
those configurations ``interpolating'' between $\proto$ and $\gate_\mathrm{att}$: a free 
particle of type $\tb$ enters $\partial^-\Lambda$ and moves to the single droplet where 
it attaches itself via an active bond. Think of $\proto$ as the set of configurations 
where the dynamics is ``almost over the hill'', of $\gate$ as the set of configurations 
where the dynamics is ``on top of the hill'', and of the free particle as ``achieving 
the crossover'' before it attaches itself properly to the single droplet (the meaning of 
the word properly will become clear in Section~\ref{S2.3}). 

The set $\proto$ is referred to as the set of \emph{protocritical droplets}. We write 
$N\starred$ to denote the cardinality of $\proto$ modulo shifts of the droplet. The set
$\gate$ is referred to as the set of \emph{critical droplets}.

\begin{itemize}
\item[(H3-b)] 
All transitions from $\gate$ that either add a particle in $\Lambda$ or increase the number 
of droplets (by breaking an active bond) lead to energy $>\Gamma\starred$.
\item[(H3-c)] 
All $\omega \in (\entgate \to \boxplus)_{\mathrm{opt}}$ pass through $\gate_{\mathrm{att}}$.
For every $\hat\eta \in \proto$ there exists a $\zeta \in \gate_{\mathrm{att}}(\hat\eta)$ such 
that $\Phi(\zeta,\boxplus)<\Gamma\starred$.
\end{itemize}

We are now ready to state our main theorems subject to (H1)--(H3).

\begin{theorem}
\label{tgate}
(a) $\lim_{\beta\to\infty} P_\Box(\tau_{\entgate}<\tau_\Boxplus \mid \tau_\Boxplus
<\tau_\Box)=1$.\\
(b) $\lim_{\beta\to\infty} P_\Box(\eta_{\tau_{\entgate}}=\zeta)
= 1/|\entgate|$ for all $\zeta \in \entgate$.
\end{theorem}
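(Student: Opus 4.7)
The plan is to prove part (a) using standard large-deviation estimates combined with the minimality of $\entgate$, and to prove part (b) using reversibility together with a symmetry argument over $\entgate$, in the spirit of \cite{BdHN06}.

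For part (a), I would argue as follows. By Definition~\ref{def3} of $\cG(\Box, \Boxplus)$ and by hypothesis (H3), every optimal path $\omega \in (\Box \to \Boxplus)_\mathrm{opt}$ first enters $\cG(\Box, \Boxplus)$ at a configuration in $\entgate$, and any such path must cross $\cG(\Box, \Boxplus)$ before reaching $\Boxplus$. Standard large-deviation estimates for metastable dynamics in the style of \cite{MNOS04} then imply that, conditionally on $\tau_\Boxplus < \tau_\Box$, the realized trajectory is optimal up to an error of order $e^{-\beta \delta}$ for some $\delta > 0$: any trajectory whose energy reaches $\Gamma\starred + \delta$ somewhere contributes an extra factor $e^{-\beta \delta}$ relative to the dominant rate $e^{-\beta \Gamma\starred}$. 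Consequently, with probability $1 - o(1)$ the trajectory visits $\entgate$ strictly before $\Boxplus$, proving (a).

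For part (b), I would use the standard reversibility identity for entrance distributions in reversible Markov chains: for each $\zeta \in \entgate$,
\begin{equation*}
P_\Box\bigl(\eta_{\tau_\entgate} = \zeta \mid \tau_\entgate < \tau_\Box\bigr) = \frac{\mu_\beta(\zeta)\, q(\zeta)}{\sum_{\zeta' \in \entgate} \mu_\beta(\zeta')\, q(\zeta')},
\end{equation*}
where $q(\zeta)$ is proportional to the effective escape probability from $\zeta$ to $\Box$ while avoiding the rest of $\entgate$. By (H3-a) every $\zeta \in \entgate$ satisfies $H(\zeta) = \Gamma\starred$, so $\mu_\beta(\zeta)$ is the same constant across $\entgate$. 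It therefore suffices to show that $q(\zeta) = q(\zeta')\,(1 + o(1))$ for all $\zeta, \zeta' \in \entgate$. Each $\zeta$ consists of a protocritical droplet $\hat\eta \in \proto$ together with a free particle of type $\tb$ at some site of $\partial^-\Lambda$. Hypothesis (H3-b) ensures that any descent from $\zeta$ to $\Box$ not revisiting $\entgate \setminus \{\zeta\}$ reduces, up to exponentially subdominant events, to the free particle performing a random walk in $\Lambda$ until it is annihilated at $\partial^-\Lambda$: any alternative route through another critical configuration would first have to exceed $\Gamma\starred$ and hence contributes $o(1)$. Translation invariance of this free-particle walk, together with the fact that $\hat\eta$ fits inside an $L\starred \times L\starred$ square which is small compared to $\Lambda$, then yields the required symmetry of $q(\zeta)$.

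The main obstacle will be making this last symmetry step rigorous: one must show that for different $\zeta \in \entgate$, with different droplet shapes, droplet positions, and free-particle locations on $\partial^-\Lambda$, the effective probability of hitting $\Box$ before returning to $\entgate$ is the same up to multiplicative $1 + o(1)$ corrections. This requires careful control of the harmonic measure for the free-particle walk killed at $\partial^-\Lambda$ or upon re-attachment to $\hat\eta$, together with a demonstration that the droplet acts only as a small, localized obstacle whose precise shape and position are asymptotically irrelevant. Once this is in hand, the remainder of the argument follows the standard capacity-and-hitting-time manipulations of potential-theoretic metastability.
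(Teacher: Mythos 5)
Part (a) of your proposal is essentially the paper's argument: the event $\{\tau_{\entgate}<\tau_\Boxplus\}^c\cap\{\tau_\Boxplus<\tau_\Box\}$ forces the path to hit a configuration of energy $\geq\Gamma\starred+\delta$, and its probability is then compared with the crossover probability $\P_\Box(\tau_\Boxplus<\tau_\Box)$, which is bounded below by a constant times $e^{-\Gamma\starred\beta}$. The paper makes this quantitative via reversibility and the a priori capacity bounds of Lemma~\ref{lemmaprbds} rather than by invoking ``standard large-deviation estimates'', but the mechanism you describe is the same; you should just be explicit that the lower bound on the crossover probability is an input (it is exactly where Lemma~\ref{lemmaprbds} is used), since the theorem claims a conditional statement, not merely that non-optimal trajectories are rare in absolute terms.

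Part (b) contains a genuine gap: you have misidentified the mechanism behind the uniformity. The statement is a fixed-$\Lambda$, $\beta\to\infty$ result, and the uniform distribution over $\entgate$ is \emph{exact} in that limit; your program --- comparing harmonic measures of the free-particle walk killed at $\partial^-\Lambda$ or upon re-attachment, and arguing that the droplet is a small obstacle whose position is irrelevant --- is a $\Lambda\to\Z^2$ type of argument (it is the mechanism behind the prefactor asymptotics \eqref{Kasymp}, not behind Theorem~\ref{tgate}(b)), and at best it could give uniformity up to $1+o(1)$ errors in a large-volume limit that the theorem does not take; moreover the escape-versus-reattachment probabilities genuinely depend on the distance between the boundary site and the droplet, so the symmetry you hope for fails at the level at which you formulate it. The correct route, after your (essentially correct) reversibility identity and the observation that $\mu_\beta(\zeta)$ is constant on $\entgate$, is to note that in the reversed picture the \emph{only} first move from $\zeta=(\hat\eta,x)$, $x\in\partial^-\Lambda$, that leads to $\Box$ before returning to $\entgate$ with non-vanishing probability is the immediate annihilation of the free particle, which lands the chain in $\proto\subset\cX_\Box$, from where $\P(\tau_\Box<\tau_{\entgate})=1-O(e^{-\delta\beta})$; every other configuration reachable from $\entgate$ in one allowed move has $h\starred_{\Box,\entgate}=O(e^{-\delta\beta})$, because any path from it to $\Box$ avoiding $\entgate$ must exceed $\Gamma\starred$ (this is \eqref{trpr}, whose proof needs (H3-a), (H3-b) \emph{and} Lemma~\ref{lbacktrack}, i.e.\ (H3-c) --- an ingredient absent from your sketch, which invokes (H3-b) only). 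Hence $\P_\zeta(\tau_\Box<\tau_{\entgate})=[1+O(e^{-\delta\beta})]\,c_\beta(\zeta,\proto)/c_\beta(\zeta,\cX\backslash\zeta)$, so the entrance probability is proportional to $c_\beta(\zeta,\proto)$, which is exactly constant on $\entgate$ (one annihilation edge per site of $\partial^-\Lambda$). No harmonic-measure or translation-invariance analysis is needed, and none would deliver the exact limit $1/|\entgate|$ claimed in the theorem.
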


\begin{theorem}
\label{tnucltime}
There exists a constant $K=K(\Lambda;U,\Da,\Db) \in (0,\infty)$ such that
\begin{equation}
\label{sharpasymp}
\lim_{\beta\to\infty} e^{-\beta\Gamma\starred} E_{\Box}(\tau_\Boxplus) = K.
\end{equation}
Moreover,
\begin{equation}
\label{Kasymp}
K \sim \frac{1}{N\starred}\,\frac{\log|\Lambda|}{4\pi|\Lambda|}
\qquad \mbox{ as } \Lambda \to \Z^2.
\end{equation}
\end{theorem}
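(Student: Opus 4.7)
The natural route is the potential-theoretic identity for reversible Markov chains,
\begin{equation*}
\E_{\Box}(\tau_\Boxplus) \;=\; \frac{1}{\CAPA(\Box,\Boxplus)}\,\sum_{\eta \in \cX} \mu_\beta(\eta)\, h_{\Box,\Boxplus}(\eta),
\end{equation*}
where $h_{\Box,\Boxplus}(\eta)=\P_\eta(\tau_\Box < \tau_\Boxplus)$ is the equilibrium potential and $\CAPA(\Box,\Boxplus)=\mu_\beta(\Box)\,\P_\Box(\tau_\Boxplus<\tau_\Box^+)$ is the Dirichlet-form capacity. The plan is to estimate the numerator and the capacity separately; each is exponentially small in $\beta$, but their ratio yields $K\,e^{\beta\Gamma\starred}$ at leading order.

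I would first dispose of the numerator. Under (H1)--(H2), every $\eta\in\cX\setminus\{\Box,\Boxplus\}$ has $V_\eta\leq V\starred<\Gamma\starred$, so $\cX$ decomposes into two ``valleys'' around $\Box$ and $\Boxplus$; on the first, a standard recurrence argument at height $V\starred$ gives $h_{\Box,\Boxplus}=1+o(1)$, while on the second $h_{\Box,\Boxplus}=o(1)$. Since $H(\Box)=0$ and every other $\eta$ in the valley of $\Box$ has strictly positive energy, the sum collapses to its $\eta=\Box$ contribution,
\begin{equation*}
\sum_{\eta \in \cX} \mu_\beta(\eta)\, h_{\Box,\Boxplus}(\eta) \;=\; \mu_\beta(\Box)\,[1+o(1)] \;=\; \frac{1}{Z_\beta}\,[1+o(1)].
\end{equation*}

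The heart of the proof is a matching two-sided estimate of $\CAPA(\Box,\Boxplus)$ with a sharp prefactor. For the upper bound I would apply the Dirichlet principle with a test function $h\starred:\cX\to[0,1]$ tailored to $\gate$: on each $\zeta\in\gate$ with protocritical droplet $\hat\eta\in\proto$ and free particle at $x\in\Lambda$, set $h\starred(\zeta)=h_{\mathrm{rw}}(x)$, where $h_{\mathrm{rw}}$ solves a discrete Dirichlet problem on $\Lambda$ for simple random walk with boundary values $1$ on $\partial^-\Lambda$ and $0$ on the attachment sites of $\hat\eta$; extend $h\starred\equiv 1$ on the valley of $\Box$ and $\equiv 0$ on the valley of $\Boxplus$. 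For the lower bound I would use the Berman--Konsowa unit-flow principle, routing unit mass from $\Box$ through $\entgate$, across $\gate$ along reversed random-walk trajectories of the free particle, and then down to $\Boxplus$ along the subcritical descent paths guaranteed by (H3-c). In both constructions, (H3-a) supplies the free-particle decomposition of $\gate$, (H3-b) rules out energetically competitive bypasses of $\gate$, and (H2) bounds the Dirichlet sum (resp.\ flow energy) on the complement of $\gate$ by $o(1)\cdot e^{-\beta\Gamma\starred}/Z_\beta$.

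The main obstacle is the upper bound: showing that no subexponential contribution is missed on $\cX\setminus\gate$, which is where the full force of (H3) is needed to eliminate parasitic saddle configurations in $\cS(\Box,\Boxplus)$. Once this is in place, both variational bounds collapse onto the classical asymptotics of the discrete 2D Green's function on $\Lambda$: the capacity of a single site against $\partial^-\Lambda$ in a box of volume $|\Lambda|$ is $\sim \pi/\log|\Lambda|$, and summing over the $|\proto|\sim N\starred\,|\Lambda|$ translates of protocritical droplets (each contributing one free-particle random-walk problem) yields $K\sim \log|\Lambda|/(4\pi N\starred|\Lambda|)$, the factor $4$ arising from the normalization when translating SRW capacity into our lattice-gas Dirichlet form.
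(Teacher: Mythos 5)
Your skeleton --- mean hitting time as a harmonic sum over a capacity, collapse of the numerator to $\mu_\beta(\Box)$, and reduction of the capacity to two-dimensional simple-random-walk capacities attached to the critical-droplet fibers --- is the same as the paper's, and your Berman--Konsowa lower bound is a legitimate substitute for the paper's monotonicity argument in Lemma~\ref{red2}. The genuine gap concerns the first assertion, \eqref{sharpasymp}: the existence of the limit $K$ at \emph{fixed} $\Lambda$. Your plan produces an upper bound (test function) and a lower bound (flow) for $e^{\beta\Gamma\starred}Z_\beta\CAPA_\beta(\Box,\Boxplus)$ that are designed to merge only as $\Lambda\to\Z^2$; at fixed $\Lambda$ they differ by order-one factors, so they control the limsup and liminf but do not prove convergence. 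The step you invoke to close this --- that (H2) makes the Dirichlet contribution of $\cX\backslash\gate$ of size $o(1)\,e^{-\beta\Gamma\starred}/Z_\beta$ --- is false in the fixed-$\Lambda$, $\beta\to\infty$ regime: the level set $\cS(\Box,\Boxplus)$ contains wells $\cX_i$ (for instance configurations of $\gate_\mathrm{att}$ at bad attachment sites, with energy $<\Gamma\starred$ but communication height $\Gamma\starred$ towards both $\Box$ and $\Boxplus$), and the transitions into and out of these wells carry weight exactly of order $e^{-\beta\Gamma\starred}/Z_\beta$ with order-one differences of the equilibrium potential; they become negligible only through factors $O(1/\log|\Lambda|)$ after $\Lambda\to\Z^2$. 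The paper closes this before any comparison with random walk: it shows that $h\starred_{\Box,\Boxplus}$ is exponentially close to $1$ on $\cX_\Box$, to $0$ on $\cX_\Boxplus$, and to a \emph{constant} on each well (Lemmas~\ref{lh*est}--\ref{lh*trivialinXi}), which yields $Z_\beta\CAPA_\beta(\Box,\Boxplus)=[1+o(1)]\,\Theta\,e^{-\Gamma\starred\beta}$ with $\Theta$ the $\beta$-independent variational formula \eqref{redvp} (Lemma~\ref{lreduction}); this already proves \eqref{sharpasymp} with $K=1/\Theta$ using (H1)--(H2) only. Only afterwards is $\Theta$ sandwiched between random-walk capacities (Lemma~\ref{red2}, using (H3) together with the estimate that the equilibrium potential is $O(1/\log|\Lambda|)$ on the bad attachment sites), and the sandwich merges as $\Lambda\to\Z^2$ to give \eqref{Kasymp}. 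You need an analogue of this constancy-on-wells reduction; without it \eqref{sharpasymp} is not established.

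Two further repairs. In the numerator, ``$h_{\Box,\Boxplus}=o(1)$ on the valley of $\Boxplus$'' does not suffice, because $\mu_\beta(\eta)$ there is exponentially \emph{larger} than $\mu_\beta(\Box)$; you need the quantitative bound $h\starred_{\Box,\Boxplus}(\eta)\leq C\,e^{-\beta[\Phi(\eta,\Box)-\Phi(\eta,\Boxplus)]}$ from the renewal and a priori capacity estimates, combined with $\Phi(\eta,\Boxplus)-H(\eta)\leq V\starred<\Gamma\starred$ from (H2) (the paper instead invokes \cite{BEGK02}, Theorem 1.3(i), and controls $\mu_\beta(\cR_\Box)$ via Lemma~\ref{lemptyisbottom}). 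Finally, the constant: with unit conductances the capacity of a single site against $\partial^+\Lambda$ in a box of volume $|\Lambda|$ is $\sim 4\pi/\log|\Lambda|$ (escape probability $\sim\pi/(2\log(2M))$ times exit rate $4$, with $\log M\sim\tfrac12\log|\Lambda|$), not $\pi/\log|\Lambda|$, and there is no additional factor-of-four ``normalization'' to invoke when passing to the lattice-gas Dirichlet form, since after scaling out $e^{-\beta\Gamma\starred}$ the relevant conductances are already unity; your final formula is correct, but the bookkeeping as written does not justify it.
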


\begin{theorem}
\label{texp}
$\lim_{\beta\to\infty} P_\Box(\tau_\Boxplus/E_\Box(\tau_\Boxplus)>t)
= e^{-t}$ for all $t \geq 0$.
\end{theorem}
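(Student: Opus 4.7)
The plan is to apply a standard two-timescale, loss-of-memory argument, in the spirit of \cite{BdHN06}. The required separation of exponential timescales is provided by the hypotheses together with Theorem~\ref{tnucltime}: on the one hand $E_\Box(\tau_\Boxplus) = e^{\beta\Gamma\starred + o(\beta)}$; on the other hand, by (H1)--(H2) every $\eta \in \cX\setminus\{\Box,\Boxplus\}$ has stability level at most $V\starred$ with $V\starred<\Gamma\starred$, so starting from any such $\eta$ the process descends to a strictly lower-energy configuration within time $e^{\beta(V\starred+o(1))}$. Only (H1)--(H2) and the mean estimate enter the proof; hypothesis (H3) is not needed here.

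Fix an intermediate scale $T_\beta := e^{\beta(V\starred+\Gamma\starred)/2}$, so that $e^{\beta V\starred}\ll T_\beta\ll E_\Box(\tau_\Boxplus)$. I would then establish two estimates: (i) a short-time relaxation estimate
\[
\sup_{\eta\neq\Boxplus} P_\eta\bigl(\tau_\Box > T_\beta,\ \tau_\Boxplus > T_\beta\bigr) \longrightarrow 0,
\]
obtained from (H2) by iterating descents along optimal paths down the energy landscape, and (ii) a short-time rarity estimate $\sup_\eta P_\eta(\tau_\Boxplus \leq T_\beta) \to 0$, obtained from Theorem~\ref{tnucltime} by Markov's inequality. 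Combining these via the strong Markov property at time $T_\beta$ yields, for $g_\beta(t) := P_\Box(\tau_\Boxplus > t\,E_\Box(\tau_\Boxplus))$, the approximate multiplicativity
\[
g_\beta(t+s) = g_\beta(t)\,g_\beta(s) + o(1), \qquad \beta\to\infty,
\]
uniformly for $s,t$ in compact subsets of $[0,\infty)$. Standard arguments then show that any subsequential weak limit $g$ of $g_\beta$ is non-increasing, right-continuous and multiplicative, hence of the form $g(t)=e^{-ct}$ for some $c\geq 0$, and the normalization $\int_0^\infty g_\beta(t)\,dt = 1$ forces $c=1$.

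The main obstacle is to secure the uniform integrability of $g_\beta$ needed to identify $c=1$ and to justify the approximate multiplicativity uniformly in $t$. For this I would prove an exponential tail bound $g_\beta(k t_0) \leq (1-p_\beta)^k$, $k\in\N$, by decomposing $\tau_\Boxplus$ into successive cycles of length $t_0\,E_\Box(\tau_\Boxplus)$ and invoking the strong Markov property at the returns to $\Box$; the per-cycle success probability $p_\beta$ is bounded away from $0$ as $\beta\to\infty$ by the Bovier--Eckhoff--Gayrard--Klein capacity formula $E_\Box(\tau_\Boxplus) \sim 1/[\mu_\beta(\Box)\,\mathrm{cap}(\Box,\Boxplus)]$ that underlies Theorem~\ref{tnucltime}. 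A subsidiary difficulty is that the loss-of-memory estimate must be uniform over the starting configuration in the basin of $\Box$, not merely for $\eta=\Box$ itself; this is where (H2) is essential, since it furnishes a single short relaxation scale $e^{\beta V\starred}$ common to the entire metastable valley.
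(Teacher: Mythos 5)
Your overall skeleton (relaxation on scale $e^{\beta V\starred}$, approximate multiplicativity of $g_\beta$, exponential tails via cycles and returns to $\Box$) is the classical renewal route and is sound in outline, but step (ii) as stated is a genuine gap, and it is a load-bearing step. First, the claim $\sup_{\eta}P_\eta(\tau_\Boxplus\leq T_\beta)\to 0$ is false: for $\eta\in\cX_\Boxplus$ (e.g.\ a configuration one move away from $\Boxplus$) the chain hits $\Boxplus$ within time $T_\beta$ with probability tending to $1$, by the very relaxation estimate (i) combined with Lemma~\ref{lemmavalleys} (from $\cX_\Boxplus$ one cannot reach $\Box$ without crossing level $\Gamma\starred$). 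The supremum must at least be restricted to the valley of $\Box$. Second, even for $\eta=\Box$, Markov's inequality applied to $\tau_\Boxplus$ controls only the upper tail $P(\tau_\Boxplus>T)\leq E_\Box(\tau_\Boxplus)/T$; knowledge of the mean alone is compatible with $P_\Box(\tau_\Boxplus\leq T_\beta)$ being of order one, so Theorem~\ref{tnucltime} cannot deliver the rarity estimate. What is actually needed is, e.g., the reversibility/capacity bound $P_\Box(\tau_\Boxplus<\tau_\Box)\leq C e^{-\beta\Gamma\starred}$ (from \eqref{caprep} and Lemma~\ref{lemmaprbds}) combined with a bound on the number of returns to $\Box$ within $T_\beta$, or alternatively a continuity-point argument for subsequential limits. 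Note that this repaired estimate is needed twice in your multiplicativity step: for the upper bound (to show that $\tau_\Boxplus$ rarely falls in the short window of length $T_\beta$ after time $tE_\Box(\tau_\Boxplus)$) and for the lower bound (to show that, conditioned on $\{\tau_\Boxplus>tE_\Box(\tau_\Boxplus)\}$, the chain is with high probability in a state from which it returns to $\Box$ before $\Boxplus$, rather than ``in transit'' beyond the gate).

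For comparison, the paper does not run this renewal argument at all: its proof is a direct appeal to Bovier, Eckhoff, Gayrard and Klein \cite{BEGK02}, Theorem 1.3(iv), whose non-degeneracy hypothesis is exactly what Lemma~\ref{lemcapameta} verifies (using Lemma~\ref{ltometapair} and the a priori capacity bounds \eqref{aprbds}), and it uses (H1)--(H2) only -- your observation that (H3) is not needed is correct. Your parts (i) and the uniform-integrability argument (cycles of length $t_0E_\Box(\tau_\Boxplus)$, Markov's inequality on the upper tail, relaxation to $\{\Box,\Boxplus\}$ to restart from $\Box$) are fine; the proposal becomes a valid self-contained alternative once the rarity step is replaced by a capacity-based (or window/continuity) argument as above.
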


We close this section with a few remarks.

\br{Tatbfree}
{\rm The free particle in (H3-a) is of type $\tb$ only when $\Da<\Db$. If $\Da=\Db$ 
(recall \eqref{delineq}), then the free particle can be of type $\ta$ or $\tb$. Indeed, 
for $\Da=\Db$ there is full symmetry of $\cS(\Box,\Boxplus)$ under the map $\ta 
\Leftrightarrow \tb$ defined in \eqref{12map}.}
\er

\br{Hcons}
{\rm We will see in Section~\ref{S1.6} that (H1--H2) imply that}
\begin{equation}
(\metaset,\groundset) = (\Box,\Boxplus), \qquad \Gamma=\Gamma\starred.
\end{equation}
{\rm The reason that $\Boxplus$ is the configuration with lowest energy comes
from the ``anti-ferromagnetic'' nature of the interaction in \eqref{Ham1}.}
\er

\br{Hyprel}
{\rm Note that (H2) and Lemma~\ref{lmetreg} imply (H1). Indeed, (H2) says that $\Box$ 
and $\Boxplus$ have the highest stability level in the sense of Definition~\ref{def2}(c),
so that $\groundset\subset\{\Box,\Boxplus\}$, while Lemma~\ref{lmetreg} says that $\Box$ 
is not the global minimum of $H$, so that $\Boxplus$ must be the global minumum of $H$, 
and hence $\groundset=\Boxplus$ according to Definition~\ref{def2}(d). One reason why we
state (H1)--(H2) as separate hypotheses is that we will later place them in a more 
general context (see Section~\ref{S1.5}, item 8). Another reason is that they are the 
key ingredients in the proof of Theorems~\ref{tgate}--\ref{texp} in Section~\ref{S2}.} 
\er

\br{bdeff}
{\rm We will see in \cite{dHNTpr1} that, depending on the shape of $\Lambda$ and the 
choice of $U,\Da,\Db$, $\groundset$ may actually consist of more than the single 
configuration $\Boxplus$, namely, it may contain configurations that differ from 
$\Boxplus$ in $\partial^-\Lambda$. Since this boundary effect does not affect our main 
theorems, we will ignore it here. A precise description of $\groundset$ will be given 
in \cite{dHNTpr1}. Moreover, depending on the choice of $U,\Da,\Db$, large droplets 
with minimal energy tend to have a shape that is either \emph{square-shaped} or 
\emph{rhombus-shaped}. Therefore it turns out to be expedient to choose $\Lambda$ 
to have the same shape. Details will be given in \cite{dHNTpr1}.}
\er

\br{H3asymp}
{\rm As we will see in Section~\ref{S2.3}, the value of $K$ is given by a \emph{non-trivial 
variational formula} involving the set of all configurations where the dynamics can enter 
and exit $\gate$. This set includes not only the border of the ``$\Gamma\starred$-valleys'' 
around $\Box$ and $\Boxplus$, but also the border of ``wells inside the energy plateau 
$\cG(\Box,\Boxplus)$'' that have energy $<\Gamma\starred$ but communication height $\Gamma\starred$ towards 
both $\Box$ and $\Boxplus$. This set contains $\proto$, $\gate_\mathrm{att}$ and possibly 
more, as we will see in \cite{dHNTpr2} (for Kawasaki dynamics with one type of particle 
this was shown in Bovier, den Hollander and Nardi~\cite{BdHN06}, Section~2.3.2). As a 
result of this geometric complexity, for finite $\Lambda$ only upper and lower bounds 
are known for $K$. What (\ref{Kasymp}) says is that these bounds merge and simplify in 
the limit as $\Lambda\to\Z^2$ (after the limit $\beta\to\infty$ has already been taken), 
and that for the asymptotics only the simpler quantity $N\starred$ matters rather than 
the full geometry of critical and near critical droplets. We will see in Section~\ref{S2.3} 
that, apart from the uniformity property expressed in Theorem~\ref{tgate}(b), the reason 
behind this simplification is the fact that simple random walk (the motion of the free 
particle) is \emph{recurrent} on $\Z^2$.}
\er

%%%%%%%%%%%%%%%%%%%%%%%%%%%%%%%%%%%%%%%%%%%%%%%%%%%%%%%%%%%%%%%%%%%%%%%%%%%%%%%%%%

\subsection{Discussion}
\label{S1.5}

{\bf 1.} 
Theorem~\ref{tgate}(a) says that $\gate$ is a gate for the nucleation, i.e., on its way 
from $\Box$ to $\boxplus$ the dynamics passes through $\gate$. Theorem~\ref{tgate}(b) says 
that all protocritical droplets and all locations of the free particle in $\partial^-\Lambda$
are equally likely to be seen upon first entrance in $\cG(\Box,\Boxplus)$. Theorem~\ref{tnucltime} 
says that the average nucleation time is asymptotic to $Ke^{\Gamma\beta}$, which is the 
classical Arrhenius law, and it identifies the asymptotics of the prefactor $K$ in the limit 
as $\Lambda$ becomes large. Theorem~\ref{texp}, finally, says that the nucleation time is 
exponentially distributed on the scale of its average.

\medskip\noindent
{\bf 2.}
Theorems~\ref{tgate}--\ref{texp} are \emph{model-independent}, i.e., they are expected 
to hold in the same form for a large class of stochastic dynamics in a finite box at 
low temperature exhibiting metastable behavior. So far this universality has been verified 
for only a handful of examples, including Kawasaki dynamics with one type of particle 
(see also item 4 below). In Section~\ref{S2} we will see that (H1)--(H3) are the 
\emph{minimal hypotheses} needed for metastable behavior, in the sense that any relative 
of Kawasaki dynamics for which Theorems~\ref{tgate}--\ref{texp} hold must satisfy appropriate analogues of
(H1)--(H3) (including multi-type Kawasaki dynamics).      

The \emph{model-dependent} ingredient of Theorems~\ref{tgate}--\ref{texp} is the triple
\begin{equation}
(\Gamma\starred,\gate,N\starred).
\end{equation}
This triple depends on the parameters $U,\Da,\Db$ in a manner that will be identified
in \cite{dHNTpr1} and \cite{dHNTpr2}. The set $\gate$ also depends on $\Lambda$, but 
in such a way that $|\gate|\sim N\starred|\Lambda|$ as $\Lambda\to\Z^2$, with the error 
coming from boundary effects. Clearly, $\Lambda$ must be taken large enough so that 
critical droplets fit inside (i.e., $\Lambda$ must contain an $L\starred \times L\starred$ 
square with $L\starred$ as in (H3-a)).

%%%%%%%%%%%%%%%%%%%%%%%%%%%%%%%%%%%%%%%%%%%%%%%%%%%%%%%%%%%%%%%%%%%%%%%%%%%%%%%%%%%%%
\begin{figure}[H]
\begin{centering}
{\includegraphics[width=0.35\textwidth]{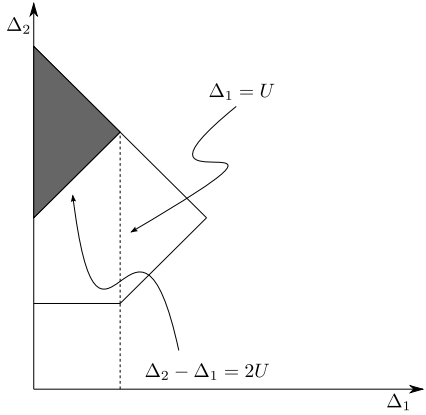}}
\par\end{centering}
\caption{Subregion of the proper metastable region considered in \cite{dHNTpr1} 
and \cite{dHNTpr2}.}
\label{fig-subpropmetreg}
\end{figure}
%%%%%%%%%%%%%%%%%%%%%%%%%%%%%%%%%%%%%%%%%%%%%%%%%%%%%%%%%%%%%%%%%%%%%%%%%%%%%%%%%%%%%%

\medskip\noindent
{\bf 3.}
In \cite{dHNTpr1} and \cite{dHNTpr2}, we will prove (H1)--(H3), identify $(\Gamma\starred,
\gate,N\starred)$ and derive an upper bound on $V^\star$ in the subregion of the proper
metastable region given by (see Fig.~\ref{fig-subpropmetreg})
\begin{equation}
\label{Dres}
0<\Da<U, \quad \Db-\Da>2U.
\end{equation}
More precisely, in \cite{dHNTpr1} we will prove (H1), identify $\Gamma\starred$, show that
$V^\star\leq 10U-\Da$, and conclude that (H2) holds as soon as $\Gamma\starred>10U-\Da$, 
which poses further restrictions on $U,\Da,\Db$ on top of \eqref{Dres}. In \cite{dHNTpr1} 
we will also see that it would be possible to show that $V^\star\leq 4U+\Da$ provided 
certain boundary effects (arising when a droplet sits close to $\partial^-\Lambda$ or 
when two or more droplets are close to each other) could be controlled. Since it will 
turn out that $\Gamma\starred>4U+\Da$ throughout the region \eqref{Dres}, this upper 
bound would settle (H2) without further restrictions on $U,\Da,\Db$. In \cite{dHNTpr2} 
we will prove (H3) and identify $\gate,N\starred$.

The simplifying features of \eqref{Dres} are the following: $\Da<U$ implies that each
time a particle of type $\ta$ enters $\Lambda$ and attaches itself to a particle of
type 2 in the droplet the energy goes down, while $\Db-\Da>2U$ implies that no
particle of type 2 sits on the boundary of a droplet that has minimal energy given 
the number of particles of type 2 in the droplet. We \emph{conjecture} that (H1)--(H3) 
hold throughout the proper metastable region (see Fig.~\ref{fig-propmetreg}). However, 
as we will see in \cite{dHNTpr1} and \cite{dHNTpr2}, $(\Gamma\starred,\gate,N\starred)$ 
is different when $\Da>U$ compared to when $\Da<U$ (because the critical droplets are 
square-shaped, respectively, rhombus-shaped).

\medskip\noindent
{\bf 4.}
Theorems~\ref{tgate}--\ref{texp} generalize what was obtained for Kawasaki dynamics 
with one type of particle in den Hollander, Olivieri and Scoppola~\cite{dHOS00}, and 
Bovier, den Hollander and Nardi~\cite{BdHN06}. In these papers, the analogues of 
(H1)--(H3) were proved, ($\Gamma\starred,\gate,N\starred)$ was identified, and bounds 
on $K$ were derived that become sharp in the limit as $\Lambda\to\Z^2$. What makes the
model with one type of particle more tractable is that the stochastic dynamics
follows a \emph{skeleton} of subcritical droplets that are \emph{squares or
quasi-squares}, as a result of a \emph{standard isoperimetric inequality} for
two-dimensional droplets. For the model with two types of particles this tool
is no longer applicable and the geometry is much harder, as will become clear in
\cite{dHNTpr1} and \cite{dHNTpr2}.

Similar results hold for Ising spins subject to \emph{Glauber dynamics}, as shown
in Neves and Schonmann~\cite{NS91}, and Bovier and Manzo~\cite{BM02}. For
this system, $K$ has a simple explicit form. Theorems~\ref{tgate}--\ref{texp}
are close in spirit to the extension for Glauber dynamics of Ising spins when an alternating
external field is included, as carried out in Nardi and Olivieri~\cite{NO96},
for Kawakasi dynamics of lattice gases with one type of particle when the
interaction between particles is different in the horizontal and the vertical
direction, as carried out in Nardi, Olivieri and Scoppola~\cite{NOS05}, and for Glauber dynamics with 
three--state spins (Blume--Capel model), as carried out in Cirillo and Olivieri~\cite{CO96} 

Our results can in principle be extended from $\Z^2$ to $\Z^3$. For one type
of particle this extension was achieved in den Hollander, Nardi, Olivieri and
Scoppola~\cite{dHNOS03}, and Bovier, den Hollander and Nardi~\cite{BdHN06}. 
For one type of particle the geometry of the critical droplet is more complex 
in $\Z^3$ than in $\Z^2$. This will also be the case for two types of particles, 
and hence it will be hard to identify $\gate$ and $N\starred$. Again, only 
upper and lower bounds can be derived for $K$. Moreover, since simple random walk 
on $\Z^3$ is \emph{transient}, these bounds do \emph{not} merge in the limit as 
$\Lambda\to\Z^3$. For Glauber dynamics the extension from $\Z^2$ to $\Z^3$ was 
achieved in Ben Arous and Cerf~\cite{BAC96}, and Bovier and Manzo~\cite{BM02}, 
and $K$ again has a simple explicit form.

\medskip\noindent
{\bf 5.}
In Gaudilli\`ere, den Hollander, Nardi, Olivieri and Scoppola~\cite{GdHNOS1},
\cite{GdHNOS2}, \cite{GdHNOS3}, and Bovier, den Hollander and Spitoni~\cite{BdHS10},
the result for Kawasaki dynamics (with one type of particle) on a finite box with
an open boundary obtained in den Hollander, Olivieri and Scoppola~\cite{dHOS00} and 
Bovier, den Hollander and Nardi~\cite{BdHN06} have been extended to Kawasaki dynamics 
(with one type of particle) on a \emph{large box} $\Lambda=\Lambda_\beta$ with a 
\emph{closed boundary}. The volume of $\Lambda_\beta$ grows exponentially fast with 
$\beta$, so that $\Lambda_\beta$ itself acts as a gas reservoir for the growing and 
shrinking of subcritical droplets. The focus is on the time of the first appearance 
of a critical droplet \emph{anywhere} in $\Lambda_\beta$. It turns out that the 
nucleation time in $\Lambda_\beta$ roughly equals the nucleation time in a finite 
box $\Lambda$ divided by the volume of $\Lambda_\beta$, i.e., \emph{spatial entropy} 
enters into the game. A challenge is to derive a similar result for Kawasaki dynamics 
with two types of particles.

\medskip\noindent
{\bf 6.}
The model in the present paper can be extended by introducing three binding energies
$U_{11},U_{22},U_{12}<0$ for the three different pairs of types that can occur in
a pair of neighboring particles. Clearly, this will further complicate the analysis,
and consequently both $(\metaset,\groundset)$ and $(\Gamma\starred,\gate,N\starred)$ 
will in general be different. The model is interesting even when $\Da,\Db<0$ and $U<0$, 
since this corresponds to a situation where the infinite gas reservoir is very dense and 
tends to push particles into the box. When $\Da<\Db$, particles of type $\ta$ tend 
to fill $\Lambda$ before particles of type 2 appear, but this is not the configuration 
of lowest energy. Indeed, if $\Db-\Da<4U$, then the binding energy is strong enough to 
still favor configurations with a checkerboard structure (modulo boundary effects). 
Identifying $(\Gamma\starred,\gate,N\starred)$ seems a complicated task.

\medskip\noindent
{\bf 7.} 
We will see in Section~\ref{S2} that (H1)--(H2) alone are enough to prove 
Theorems~\ref{tgate}--\ref{texp}, with the exception of the uniform entrance distribution of $\entgate$ 
and the scaling of $K$ in \eqref{Kasymp}. 
The latter require (H3) and come out of a closer analysis of 
the energy landscape near $\gate$, respectively, a \emph{variational 
formula} for $1/K$ that is derived on the basis of (H1)--(H2) alone. 

In Manzo, Nardi, Olivieri and Scoppola~\cite{MNOS04} an ``axiomatic approach'' to 
metastability similar to the one in the present paper was put forward, but the results 
that were obtained (for a general dynamics) based on hypotheses similar to (H1)--(H2) 
were cruder, e.g.\ the nucleation time was shown to be $\exp[\beta\Gamma\starred
+o(\beta)]$, which fails to capture the fine asymptotics in (\ref{sharpasymp})
and consequently also the scaling in (\ref{Kasymp}). 
Also the uniform entrance distribution was not established. These finer details come out of 
the potential-theoretic approach to metastability developed in Bovier, Eckhoff, Gayrard
and Klein~\cite{BEGK02} explained in Section~\ref{S2}.   
    
\medskip\noindent
{\bf 8.}
Hypotheses (H1)--(H3) are the \emph{minimal hypotheses} in the following sense. If we 
consider Kawasaki dynamics with more than two types of particles and/or change the details 
of the interaction (e.g.\ by adding to \eqref{Ham1} also interactions between particles 
of different type), then all that changes is that $\Box$ and $\Boxplus$ are replaced by
different configurations, while (H1)--(H2) remain the same for their new counterparts 
and (H3) remains the same for the analogues of $\proto$, $\gate$, $\entgate$ and 
$\gate_\mathrm{att}$. The proof in Section~\ref{S2} will show that 
Theorems~\ref{tgate}--\ref{texp} continue to hold under (H1)--(H3) in the new setting. 
For further reading we refer the reader to the monograph in progress by Bovier and den 
Hollander~\cite{BdHmon}.

%%%%%%%%%%%%%%%%%%%%

\subsection{Consequences of (H1)--(H3)}
\label{S1.6}

Lemmas~\ref{lemptyismeta}--\ref{lbacktrack} below are immediate consequences of
(H1)--(H3) and will be needed in the proof of Theorems~\ref{tgate}--\ref{texp} in 
Section~\ref{S2}.

\begin{lemma}
\label{lemptyismeta}
{\rm (H1)--(H2)} imply that $\stablev{\Box} = \Gamma\starred$.
\end{lemma}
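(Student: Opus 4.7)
The plan is to prove the two inequalities $V_\Box \leq \Gamma^\star$ and $V_\Box \geq \Gamma^\star$ separately. For the upper bound, I will simply observe that by (H1) the configuration $\Boxplus$ is the unique element of $\groundset$, hence the unique global minimum of $H$; combined with Lemma~\ref{lmetreg} (which asserts that $\Box$ is a local but not a global minimum), this forces $H(\Boxplus)<H(\Box)=0$. Therefore $\Boxplus\in\lowset{\Box}=\{\xi\in\cX\colon H(\xi)<0\}$, and directly from Definition~\ref{def2}(c),
\[
V_\Box = \comlev(\Box,\lowset{\Box}) - H(\Box) \leq \comlev(\Box,\Boxplus) = \Gamma\starred.
\]

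For the lower bound I will argue by contradiction. Suppose some $\eta\in\lowset{\Box}$ satisfies $\comlev(\Box,\eta)<\Gamma\starred$. The strategy is to use (H2) to push $\eta$ all the way down to $\Boxplus$ through a sequence of configurations $\eta=\eta_0,\eta_1,\eta_2,\dots$ of strictly decreasing energy, while controlling the maximum height of each descent step. Concretely, as long as $\eta_k \notin\{\Box,\Boxplus\}$, hypothesis (H2) guarantees a path from $\eta_k$ to some $\eta_{k+1}$ with $H(\eta_{k+1})<H(\eta_k)$ whose maximum height is at most $H(\eta_k)+V^\star$. Since $H(\eta_0)=H(\eta)<0$ and the sequence of energies is strictly decreasing, we have $H(\eta_k)<0$ for every $k$, so every descent step stays at height strictly below $V^\star<\Gamma\starred$; in particular $\eta_k\neq\Box$ for all $k\geq 1$ (since $H(\Box)=0$), so the iteration can only halt when $\eta_k=\Boxplus$, which happens after finitely many steps because $\cX$ is finite.

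Concatenating an optimal path from $\Box$ to $\eta$ (with maximal height $\comlev(\Box,\eta)<\Gamma\starred$) with the successive descent paths $\eta_k\to\eta_{k+1}$ produces a path from $\Box$ to $\Boxplus$ whose overall maximal height is
\[
\max\Bigl(\comlev(\Box,\eta),\ \max_{k\geq 0} \bigl[H(\eta_k)+V^\star\bigr]\Bigr) < \Gamma\starred,
\]
contradicting $\comlev(\Box,\Boxplus)=\Gamma\starred$. Hence $\comlev(\Box,\eta)\geq\Gamma\starred$ for every $\eta\in\lowset{\Box}$, which yields $V_\Box\geq\Gamma\starred$.

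The only point that requires a little care is the bookkeeping on energies along the descent: I need to confirm that the strict inequality $H(\eta_k)<0$ is preserved at every step so that the bound $H(\eta_k)+V^\star<\Gamma\starred$ actually holds, and that the descent cannot get trapped at $\Box$ (which is a local minimum). Both follow from the fact that the sequence $(H(\eta_k))_k$ is strictly decreasing starting from a negative value, together with $H(\Box)=0$ and (H1).
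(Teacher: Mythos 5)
Your proof is correct and takes essentially the same route as the paper: the upper bound comes from $\Boxplus\in\lowset{\Box}$ (which (H1) already gives, without needing Lemma~\ref{lmetreg}), and the lower bound from a contradiction argument in which (H2) and the finiteness of $\cX$ produce a strictly descending chain from $\eta$ to $\Boxplus$ whose excursions stay below $\Gamma\starred$. The only cosmetic difference is that you concatenate the paths explicitly and spell out why the chain cannot terminate at $\Box$, whereas the paper packages the same estimate via the ultrametric inequality \eqref{ultramet}.
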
 

\begin{proof}
By Definitions~\ref{def2}(c--f) and (H1), $\boxplus\in\lowset{\Box}$, which implies that 
$\stablev{\Box}\leq\Gamma\starred$. We show that (H2) implies $\stablev{\Box}=\Gamma\starred$. 
The proof is by contradiction. Suppose that $\stablev{\Box}<\Gamma\starred$. Then, by 
Definition~\ref{def2}(c) and (H2), there exists an $\eta\in\lowset{\Box}\backslash\boxplus$ 
such that $\comlev(\Box,\eta)-H(\Box)<\Gamma\starred$. But, by (H2) and the finiteness of 
$\cX$, there exist an $m \in \N$ and a sequence $\eta_{0},\ldots,\eta_{m}\in\cX$ with 
$\eta_0=\eta$ and $\eta_{m}=\boxplus$ such that $\eta_{i+1}\in\lowset{\eta_{i}}$ and 
$\comlev(\eta_{i},\eta_{i+1}) \leq H(\eta_{i})+V\starred$ for $i=0,\ldots,m-1$. Therefore
\begin{equation}
\label{eststring}
\comlev(\eta,\boxplus)
\leq \max_{i=0,\ldots,m-1} \comlev(\eta_{i}, \eta_{i+1})
\leq \max_{i=0,\ldots,m-1} [H(\eta_{i}) + V\starred]
= H(\eta) + V\starred 
< H(\Box) + \Gamma\starred,
\end{equation}
where in the first inequality we use that 
\begin{equation}
\label{ultramet}
\comlev(\eta,\sigma) \leq \max\{\comlev(\eta,\xi),\comlev(\xi,\sigma)\} 
\qquad \forall\,\eta,\sigma,\xi\in\cX,
\end{equation} 
and in the last inequality that $\eta\in\lowset{\Box}$ and $V\starred<\Gamma\starred$. It 
follows that
\begin{equation}
\Gamma\starred = \comlev(\Box,\Boxplus)-H(\Box) 
\leq \max\{\comlev(\Box,\eta),\comlev(\eta,\Boxplus)\}-H(\Box) < \Gamma\starred,
\end{equation}
which is a contradiction.
\end{proof}

\begin{lemma}
\label{ltometapair}
{\rm (H2)} implies that $\comlev(\eta,\{\Box,\Boxplus\}) - H(\eta) \leq V\starred$
for all $\eta\in\cX\backslash\{\Box,\Boxplus\}$.
\end{lemma}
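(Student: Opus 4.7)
The plan is to iterate (H2) along a descending chain of configurations, in essentially the same spirit as the proof of Lemma~\ref{lemptyismeta} that was just given. Fix $\eta \in \cX \setminus \{\Box, \Boxplus\}$. Since $\eta \neq \Box, \Boxplus$, hypothesis (H2) gives $V_\eta \leq V^\star$, so by Definition~\ref{def2}(c) there exist $\eta_1 \in \lowset{\eta}$ and a path $\omega_0\colon \eta \to \eta_1$ with $\max_{\xi \in \omega_0} H(\xi) \leq H(\eta) + V^\star$. If $\eta_1 \in \{\Box, \Boxplus\}$ we stop; otherwise we apply (H2) to $\eta_1$ and obtain $\eta_2 \in \lowset{\eta_1}$ together with a path $\omega_1\colon \eta_1 \to \eta_2$ of maximal height at most $H(\eta_1) + V^\star$. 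We continue inductively.

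Next I would argue that this iteration must terminate in $\{\Box, \Boxplus\}$. Since the energies $H(\eta_0) > H(\eta_1) > H(\eta_2) > \cdots$ are strictly decreasing and $\cX$ is finite, the sequence cannot be infinite. The only way the iteration can stop is when we land on a configuration for which (H2) does not provide a further descent, and by assumption (H2) supplies such a descent for every configuration in $\cX \setminus \{\Box,\Boxplus\}$. Hence there exists a finite $k$ such that $\eta_k \in \{\Box, \Boxplus\}$.

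It then remains to combine the estimates. Concatenating the paths $\omega_0, \omega_1, \ldots, \omega_{k-1}$ yields a path $\omega\colon \eta \to \eta_k$ with
\begin{equation}
\max_{\xi \in \omega} H(\xi) \;=\; \max_{0 \leq i < k}\,\max_{\xi \in \omega_i} H(\xi)
\;\leq\; \max_{0 \leq i < k} \bigl[H(\eta_i) + V^\star\bigr]
\;=\; H(\eta_0) + V^\star,
\end{equation}
where the last equality uses the strict decrease of $i \mapsto H(\eta_i)$. Equivalently, this is just an iterated application of the ultrametric-type inequality \eqref{ultramet}. Since $\eta_k \in \{\Box, \Boxplus\}$, we conclude $\comlev(\eta, \{\Box, \Boxplus\}) \leq H(\eta) + V^\star$, which is the claim.

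There is no real obstacle here: the proof is a direct iteration of (H2), and the finiteness of $\cX$ together with strict energy descent takes care of termination. The only small subtlety worth flagging is making sure the iteration cannot terminate at a configuration other than $\Box$ or $\Boxplus$, which is handled by noting that (H2) gives a strictly lower-energy neighbor (through the definition of $V_\eta$) at every step for such configurations.
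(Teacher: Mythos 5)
Your proof is correct and is essentially the paper's own argument: the paper likewise uses (H2) together with the finiteness of $\cX$ to produce a strictly descending sequence $\eta_0=\eta,\ldots,\eta_m\in\{\Box,\Boxplus\}$ with $\comlev(\eta_i,\eta_{i+1})\leq H(\eta_i)+V\starred$, and then concatenates exactly as in \eqref{eststring}. The only cosmetic difference is that you spell out the termination of the iteration and the maximum over the concatenated path, which the paper compresses into a reference to \eqref{eststring}.
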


\begin{proof}
Fix $\eta\in\cX\backslash\{\Box,\Boxplus\}$. By (H2) and the finiteness of $\cX$, there exist 
an $m\in\N$ and a sequence $\eta_{0},\ldots,\eta_{m}\in\cX$ with $\eta_0=\eta$ and $\eta_{m}
\in\{\Box,\Boxplus\}$ such that $\eta_{i+1}\in\lowset{\eta_{i}}$ and $\comlev(\eta_{i},
\eta_{i+1})\leq H(\eta_{i})+V\starred$ for $i=0,\ldots,m-1$. Therefore, as in (\ref{eststring}),
we get $\comlev(\eta,\{\Box,\Boxplus\})\leq H(\eta) + V\starred$.
\end{proof}

\begin{lemma}
\label{lemptyisbottom}
{\rm (H1)--(H2)} imply that $H(\eta)>H(\Box)$ for all $\eta\in\cX\backslash\Box$ such that 
$\Phi(\eta,\Box) \leq \Phi(\eta,\Boxplus)$. 
\end{lemma}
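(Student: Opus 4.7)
The plan is to argue by contrapositive: assume $\eta\in\cX\setminus\Box$ with $H(\eta)\leq H(\Box)=0$, and deduce that $\Phi(\eta,\Box)>\Phi(\eta,\Boxplus)$. The case $\eta=\Boxplus$ can be dispatched directly from Lemma~\ref{lemptyismeta} and (H1): one has $\Phi(\Boxplus,\Box)=\Gamma\starred$, whereas $\Phi(\Boxplus,\Boxplus)=H(\Boxplus)<0<\Gamma\starred$, since $\Boxplus$ is the global energy minimum while $H(\Box)=0$.

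For the main case $\eta\in\cX\setminus\{\Box,\Boxplus\}$, I would reuse the strictly descending chain construction from the proof of Lemma~\ref{ltometapair}: using (H2), set $\eta_{0}=\eta$ and successively pick $\eta_{i+1}\in\lowset{\eta_{i}}$ with $\Phi(\eta_{i},\eta_{i+1})\leq H(\eta_{i})+V\starred$, stopping as soon as $\eta_{m}\in\{\Box,\Boxplus\}$. The key observation---and this is where the hypothesis $H(\eta)\leq H(\Box)$ does its work---is that the strict descent $H(\eta_{m})<H(\eta_{m-1})<\cdots<H(\eta_{0})=H(\eta)\leq 0=H(\Box)$ rules out $\eta_{m}=\Box$, forcing $\eta_{m}=\Boxplus$. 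Iterating the ultrametric inequality (\ref{ultramet}) along the chain, exactly as in (\ref{eststring}), then yields
\[
\Phi(\eta,\Boxplus)\leq\max_{i=0,\ldots,m-1}\Phi(\eta_{i},\eta_{i+1})\leq H(\eta)+V\starred\leq V\starred<\Gamma\starred.
\]

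To close the argument, I would combine the standing assumption $\Phi(\eta,\Box)\leq\Phi(\eta,\Boxplus)$ with one further application of (\ref{ultramet}) to the pair $(\Box,\Boxplus)$ via $\eta$:
\[
\Gamma\starred=\Phi(\Box,\Boxplus)\leq\max\{\Phi(\Box,\eta),\Phi(\eta,\Boxplus)\}=\Phi(\eta,\Boxplus)<\Gamma\starred,
\]
which is the desired contradiction. The only real obstacle is pinning down $\eta_{m}=\Boxplus$ rather than $\eta_{m}=\Box$; this is precisely what the hypothesis $H(\eta)\leq H(\Box)$ buys via the strict descent, and it is the sole substantive ingredient beyond the machinery already used in Lemmas~\ref{lemptyismeta}--\ref{ltometapair}.
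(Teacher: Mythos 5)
Your proposal is correct and follows essentially the same route as the paper's proof: a contradiction argument in which (H2) produces a strictly descending chain that must terminate at $\Boxplus$ (the assumption $H(\eta)\leq H(\Box)$ excluding $\Box$ from the chain, exactly as the paper's observation that $\Box\notin\lowset{\eta}$), giving $\Phi(\eta,\Boxplus)\leq H(\eta)+V\starred<\Gamma\starred$, after which the ultrametric inequality \eqref{ultramet} together with $\Phi(\eta,\Box)\leq\Phi(\eta,\Boxplus)$ forces $\Gamma\starred<\Gamma\starred$. Your separate treatment of $\eta=\Boxplus$ and the explicit justification that the chain cannot hit $\Box$ are only cosmetic elaborations of the paper's argument.
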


\begin{proof}
By (H1), $\Boxplus\in\lowset{\eta}$ for all $\eta\neq\Boxplus$. The proof is by contradiction. 
Fix $\eta\in\cX\backslash\Box$ and suppose that $H(\eta) \leq H(\Box) = 0$. Then 
$\Box\notin\lowset{\eta}$. By (H2) and the finiteness of $\cX$, there exist an $m\in\N$ 
and a sequence $\eta_{0},\ldots,\eta_{m}\in\cX$ with $\eta_0=\eta$ and $\eta_{m}=\Boxplus$ 
such that $\eta_{i+1}\in\lowset{\eta_{i}}$ and $\comlev(\eta_{i},\eta_{i+1})\leq H(\eta_{i})
+V\starred$ for $i=0,\ldots,m-1$. Therefore, as in (\ref{eststring}), we get $\Phi(\eta,\Boxplus)
\leq H(\eta)+V\starred\leq H(\Box)+V\starred<H(\Box)+\Gamma\starred$. Hence
\begin{equation}
\begin{aligned}
\Gamma\starred 	&= \Phi(\Box,\Boxplus)-H(\Box) 
\leq \max\{\Phi(\Box,\eta),\Phi(\eta,\Boxplus)\}-H(\Box)\\
&= \max\{\Phi(\eta,\Box),\Phi(\eta,\Boxplus)\}- H(\Box)
= \Phi(\eta,\Boxplus)-H(\Box) < \Gamma\starred,
\end{aligned}
\end{equation}
which is a contradiction.	
\end{proof}

\begin{lemma}
\label{lbacktrack}
{\rm (H3a), (H3-c)} and Definition~{\rm\ref{defdroplets-b}(a)} imply that for every 
$\eta\in\gate_\mathrm{att}$ all paths in $(\eta\to\Box)_\mathrm{opt}$ pass through 
$\entgate$.
\end{lemma}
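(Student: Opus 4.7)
The plan is to reduce the claim to a property of optimal $\Box \to \Boxplus$ paths via time-reversal and concatenation. Given $\omega \in (\eta \to \Box)_\mathrm{opt}$ with $\eta \in \gate_\mathrm{att}(\hat\eta)$ for some $\hat\eta \in \proto$, write $\bar\omega \colon \Box \to \eta$ for the time-reversed path (also optimal). I will append a carefully chosen auxiliary path $\pi \colon \eta \to \Boxplus$ that avoids $\entgate$, producing a concatenation $\omega^\star = \bar\omega \cdot \pi$ from $\Box$ to $\Boxplus$ whose maximum energy equals $\Gamma\starred$, hence optimal. Since $\entgate$ is, by Definition~\ref{defdroplets-a}(a), the set through which every path in $(\Box \to \Boxplus)_\mathrm{opt}$ enters $\cG(\Box,\Boxplus)$, the first configuration of $\omega^\star$ in $\cG(\Box,\Boxplus)$ must lie in $\entgate$. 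Because $\pi \cap \entgate = \emptyset$, this configuration is visited during $\bar\omega$, and hence also by $\omega$, which establishes the lemma.

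To construct $\pi$, I use (H3-c) to fix $\zeta \in \gate_\mathrm{att}(\hat\eta)$ with $\Phi(\zeta,\Boxplus) < \Gamma\starred$, together with a path $\sigma \colon \zeta \to \Boxplus$ whose maximal energy is strictly less than $\Gamma\starred$. Since $\eta$ and $\zeta$ both belong to $\gate_\mathrm{att}(\hat\eta)$ (Definition~\ref{defdroplets-b}(a)), they differ only in the site at which a single extra $\tb$-particle is attached to $\hat\eta$ via an active bond. I define $\pi$ in three stages: (i) hop the extra particle of $\eta$ to a neighboring empty site off $\hat\eta$, breaking the active bond and landing in $\gate(\hat\eta)$ at energy $\Gamma\starred$; (ii) hop the resulting free $\tb$-particle through $\Lambda^- \setminus (\hat\eta \cup \partial^-\Lambda)$ to the attachment site of $\zeta$ and attach it there, landing at $\zeta$; (iii) follow $\sigma$ to $\Boxplus$. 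By (H3-a), $\hat\eta$ fits inside an $L^\star \times L^\star$ square contained in $\Lambda^-$, so for $\Lambda$ large enough the free particle can indeed be routed between any two sites adjacent to $\hat\eta$ without entering $\partial^-\Lambda$. By (H3-a) together with Definition~\ref{defdroplets-b}(a), $\entgate$ consists of configurations with a single protocritical droplet and a free $\tb$-particle in $\partial^-\Lambda$; therefore $\pi$ avoids $\entgate$, while $\max_{\xi \in \pi} H(\xi) = \Gamma\starred$, attained in stages (i)--(ii).

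Optimality of $\omega^\star$ follows from $\Phi(\Box,\eta) \leq \Gamma\starred$, which is witnessed by the reference path that grows a protocritical droplet from $\Box$, creates a free $\tb$-particle in $\partial^-\Lambda$, then moves and attaches it to produce $\eta$. Since $\omega$ is optimal, $\max_{\xi \in \bar\omega} H(\xi) = \Phi(\eta,\Box) \leq \Gamma\starred$, and combining with $\max_{\xi \in \pi} H(\xi) = \Gamma\starred$ yields $\max_{\xi \in \omega^\star} H(\xi) = \Gamma\starred = \Phi(\Box,\Boxplus)$, so $\omega^\star \in (\Box \to \Boxplus)_\mathrm{opt}$. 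Applying Definition~\ref{defdroplets-a}(a) to $\omega^\star$ as above then concludes the argument.

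The main obstacle is arranging stage (ii) of $\pi$ so that the free $\tb$-particle is genuinely never in $\partial^-\Lambda$. This is where (H3-a) is essential: the confinement of $\hat\eta$ to a small square strictly inside $\Lambda^-$ guarantees that $\Lambda^- \setminus (\hat\eta \cup \partial^-\Lambda)$ is connected and contains sites adjacent to every boundary position of $\hat\eta$, so the free particle can move between any two attachment sites without touching $\partial^-\Lambda$. Were a protocritical droplet allowed to sit against $\partial^-\Lambda$, such a detour might be forced to enter $\entgate$, and the forcing argument would collapse.
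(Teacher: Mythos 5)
Your argument has the same architecture as the paper's proof: reverse an optimal path $\eta\to\Box$, extend it over the hill by detaching the attached particle, steering the resulting free particle at level $\Gamma\starred$ to the good attachment site supplied by (H3-c), and descending from $\zeta$ to $\Boxplus$ strictly below $\Gamma\starred$; the concatenation is then optimal for $\Box\to\Boxplus$, hence must meet $\entgate$, and since the extension piece does not, the original path must.

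There is, however, one genuine flaw in your justification of the routing step. You assert that (H3-a) confines the protocritical droplet ``strictly inside $\Lambda^-$'' so that the detour avoiding $\partial^-\Lambda$ is automatic, and you even concede that if a protocritical droplet were allowed to sit against $\partial^-\Lambda$ ``the forcing argument would collapse.'' But (H3-a) only states that the droplet fits inside an $L\starred\times L\starred$ square contained in $\Lambda^-$; it does not forbid the droplet from abutting $\partial^-\Lambda$, so your proof as written does not cover those $\hat\eta\in\proto$. The paper closes precisely this case by noting that the relevant attachment sites can never lie in $\partial^-\Lambda$ (active bonds exist only in $(\Lambda^-)\starred$, and particles in $\partial^-\Lambda$ do not interact with particles in $\Lambda^-$), so even when the droplet touches $\partial^-\Lambda$ the free particle can be routed around the small droplet within $\Lambda^-$ without ever entering $\partial^-\Lambda$. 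The same care is needed in your stage (i): the site onto which you detach must be chosen in $\Lambda^-$, since otherwise the intermediate configuration is a protocritical droplet with a free particle in $\partial^-\Lambda$, which may belong to $\entgate$ and spoil the avoidance claim. With these repairs your argument coincides with the paper's. (A cosmetic point: the cleanest reason stage (iii) avoids $\entgate$ is that $\sigma$ stays strictly below $\Gamma\starred$ while every configuration of $\entgate$ has energy exactly $\Gamma\starred$.)
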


\begin{proof}
Let $\eta$ be any configuration in $\gate_\mathrm{att}$. Then, by (H3-a) and 
Definition~\ref{defdroplets-b}(a), there is a configuration $\xi$, consisting 
of a single \emph{protocritical} droplet, say, $D$ and a free particle (of type 
$\tb$) next to the border of $D$, such that $\eta$ is obtained from $\xi$ in a 
single move: the free particle attaches itself \emph{somewhere} to $D$. Now, consider any path 
starting at $\eta$, ending at $\Box$, and not exceeding energy level $\Gamma\starred$. The reverse 
of this path, starting at $\Box$ and ending at $\eta$, can be extended by the single move from 
$\eta$ to $\xi$ to obtain a path from $\Box$ to $\xi$ that is also not exceeding energy level 
$\Gamma\starred$. Moreover, this path can be further extended from $\xi$ to $\Boxplus$ without 
exceeding energy level $\Gamma\starred$ as well. To see the latter, note that, by (H3-c), there 
is \emph{some} location $x$ on the border of $D$ such that the configuration $\zeta \in 
\gate_\mathrm{att}$ consisting of $D$ with the free particle attached at $x$ is such that there 
is a path from $\zeta$ to $\Boxplus$ that stays \emph{below} energy level $\Gamma\starred$. 
Furthermore, we can move from $\xi$ (with $H(\xi)=\Gamma\starred$) to $\zeta$ (with $H(\zeta)
<\Gamma\starred$) at constant energy level $\Gamma\starred$, dropping below $\Gamma\starred$ only 
at $\zeta$, simply by moving the free particle to $x$ without letting it hit $\partial^-\Lambda$. 
(By (H3-a), there is room for the free particle to do so because $D$ fits inside an $L\starred 
\times L\starred$ square somewhere in $\Lambda^-$. Even when $D$ touches $\partial^-\Lambda$ the 
free particle can still avoid $\partial^-\Lambda$, because $x$ can never be in $\partial^-\Lambda$: 
particles in $\partial^-\Lambda$ do not interact with particles in $\Lambda^-$.) The resulting 
path from $\Box$ to $\Boxplus$ (via $\eta$, $\xi$ and $\zeta$) is a path in $(\Box\to
\Boxplus)_\mathrm{opt}$. However, by Definition~\ref{defdroplets-a}(a), \emph{any} path 
in $(\Box\to\Boxplus)_\mathrm{opt}$ must hit $\gate_\mathrm{bd}$. Hence, the piece of the 
path from $\eta$ to $\Box$ must hit $\gate_\mathrm{bd}$, because the piece of the path from 
$\eta$ to $\Boxplus$ (via $\xi$ and $\zeta$) does not.
\end{proof}

Note that Lemma~\ref{lemptyismeta} implies that $\metaset=\Box$ and $\Gamma=\Gamma\starred$
(recall Definition~\ref{def2}(e--f). 

%%%%%%%%%%%%%%%%%%%%%%%%% SECTION 2 %%%%%%%%%%%%%%%%%%%%%%%%%%%%%%%%%%%%%%%%%%%%%%

\section{Proof of main theorems}
\label{S2}

In this section we prove Theorems~\ref{tgate}--\ref{texp} subject to hypotheses
(H1)--(H3). Sections~\ref{S2.1}--\ref{Smcnt} introduce the basic ingredients, while
Sections~\ref{S2.3}--\ref{S2.5} provide the proofs.

We will follow the \emph{potential-theoretic} argument that was used in Bovier, den 
Hollander and Nardi~\cite{BdHN06} for Kawasaki dynamics with one type of particle. 
In fact, we will see that (H1)--(H3) are the \emph{minimal} assumptions needed to 
prove Theorems~\ref{tgate}--\ref{texp}.

%%%%%%%%%%%%%%%%%%%%%%%%%%%%%%%%%%%%

\subsection{Dirichlet form and capacity}
\label{S2.1}

The key ingredient of the potential-theoretic approach to metastability is the
\emph{Dirichlet form}
\begin{equation}
\label{Diri}
\cE_\beta(h) = \tfrac12 \sum_{\eta,\eta'\in\cX} \mu_\beta(\eta)c_\beta(\eta,\eta')
[h(\eta)-h(\eta')]^2, \qquad h\colon\,\cX \to [0,1],
\end{equation}
where $\mu_\beta$ is the Gibbs measure defined in (\ref{Gibbs}) and $c_\beta$ is
the kernel of transition rates defined in (\ref{rate}). Given a pair of non-empty
disjoint sets $\cA,\cB \subset \cX$, the \emph{capacity} of the pair $\cA,\cB$
is defined by
\begin{equation}
\label{capa}
\CAPA_\beta(\cA,\cB) = \min_{ {h\colon\,\cX \to [0,1]} \atop
{h|_\cA\equiv 1,h|_\cB\equiv 0} } \cE_\beta(h),
\end{equation}
where $h|_\cA\equiv 1$ means that $h(\eta)=1$ for all $\eta\in\cA$ and $h|_\cB\equiv 0$
means that $h(\eta)=0$ for all $\eta\in\cB$. The unique minimizer $h\starred_{\cA,\cB}$ of
\eqref{capa}, called the \emph{equilibrium potential} of the pair $\cA,\cB$, is given 
by
\begin{equation}
\label{h*def}
h\starred_{\cA,\cB}(\eta) = P_\eta(\tau_\cA<\tau_\cB), \qquad \eta \in \cX\backslash(\cA\cup\cB),
\end{equation}
and is the solution of the equation
\begin{equation}
\label{soleq}
\begin{aligned}
(c_{\beta} h)(\eta) &= 0, \qquad  \eta\in \cX\backslash (\cA\cup\cB),\\ 
h(\eta)&=1, \qquad  \eta\in \cA,\\
h(\eta)&=0,\qquad  \eta\in \cB,\\ 
\end{aligned}
\end{equation}
with $(c_{\beta} h)(\eta) = \sum_{\eta'\in\cX} c_\beta(\eta,\eta')h(\eta')$. Moreover,
\begin{equation}
\label{caprep}
\CAPA_\beta(\cA,\cB) = \sum_{\eta\in\cA} \mu_\beta(\eta)\,c_{\beta}(\eta,\cX\backslash\eta)
\,\P_\eta(\tau_\cB<\tau_\cA)
\end{equation}
with $c_{\beta}(\eta,\cX\backslash\eta)=\sum_{\eta'\in\cX\backslash\eta} c_{\beta}(\eta,\eta')$ 
the rate of moving out of $\eta$. This rate enters because $\tau_\cA$ is the first hitting 
time of $\cA$ after the initial configuration is left (recall Definition~\ref{def1}(f)). 
Note that the reversibility of the dynamics and (\ref{Diri}--\ref{capa}) imply 
\begin{equation}
\label{capsym}
\CAPA_\beta(\cA,\cB)=\CAPA_\beta(\cB,\cA).
\end{equation}

The following lemma establishes bounds on the capacity of two disjoint sets. These bounds 
are referred to as \emph{a priori estimates} and will serve as the starting point for more
refined estimates later on.

\begin{lemma}
\label{lemmaprbds}
For every pair of non-empty disjoint sets $\cA,\cB\subset\cX$ there exist constants 
$0<C_1 \leq C_2<\infty$ (depending on $\Lambda$ and $\cA,\cB$) such that
\begin{equation}
\label{aprbds}
C_1 \leq e^{\beta\Phi(\cA,\cB)} Z_\beta\CAPA_\beta(\cA,\cB) \leq C_2
\qquad \forall\,\beta \in (0,\infty).
\end{equation}
\end{lemma}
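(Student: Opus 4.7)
The plan is to prove the two bounds separately using the variational principle \eqref{capa} for the upper bound and a single-path electrical-network argument for the lower bound. Both bounds are purely combinatorial and do not rely on (H1)--(H3), which is what makes them \emph{a priori}.

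For the upper bound, I would exhibit an explicit test function. A natural choice is $h=1_{\cA^\star}$, where
\begin{equation}
\cA^\star = \{\eta\in\cX\colon \Phi(\eta,\cA)<\Phi(\cA,\cB)\}.
\end{equation}
By construction $\cA\subseteq \cA^\star$ and $\cB\cap \cA^\star=\emptyset$, so $h$ is admissible in \eqref{capa}. The key observation is that whenever $\eta\in\cA^\star$, $\eta'\notin\cA^\star$ and $\eta\sim\eta'$, one must have $H(\eta')\geq \Phi(\cA,\cB)$: otherwise one could prepend the optimal sub-$\Phi(\cA,\cB)$ path from $\cA$ to $\eta$ to the edge $(\eta,\eta')$ and contradict $\eta'\notin\cA^\star$. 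For the Metropolis rates \eqref{rate} one has the reversibility identity $\mu_\beta(\eta)c_\beta(\eta,\eta')=Z_\beta^{-1}e^{-\beta\max\{H(\eta),H(\eta')\}}$, so each nonzero summand in $\cE_\beta(h)$ is at most $Z_\beta^{-1}e^{-\beta\Phi(\cA,\cB)}$. Summing over the (finitely many) contributing pairs yields $Z_\beta\CAPA_\beta(\cA,\cB)\leq C_2\,e^{-\beta\Phi(\cA,\cB)}$ with $C_2$ depending only on $|\cX|$.

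For the lower bound, I would pick any path $\omega^\star=(\eta_0,\eta_1,\dots,\eta_n)\in(\cA\to\cB)_{\mathrm{opt}}$ with $\eta_0\in\cA$, $\eta_n\in\cB$, and use the electrical-network interpretation of capacity. By the Berman--Konsowa / Thomson principle (equivalently, by Rayleigh monotonicity after removing all edges outside $\omega^\star$), the effective resistance is bounded above by the series resistance along any single path, giving
\begin{equation}
\CAPA_\beta(\cA,\cB) \geq \Bigl(\sum_{i=0}^{n-1}\bigl[\mu_\beta(\eta_i)c_\beta(\eta_i,\eta_{i+1})\bigr]^{-1}\Bigr)^{-1}.
\end{equation}
Since $\omega^\star$ realizes the minimax, each bracket equals $Z_\beta\,e^{\beta\max\{H(\eta_i),H(\eta_{i+1})\}}\leq Z_\beta\,e^{\beta\Phi(\cA,\cB)}$, so $Z_\beta\CAPA_\beta(\cA,\cB)\geq n^{-1}e^{-\beta\Phi(\cA,\cB)}$, and one can take $C_1=1/|\cX|$.

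The only real point that needs care is the justification of the single-path lower bound, since this is where the flow interpretation enters; in a self-contained version I would simply cite Thomson's principle, or equivalently restrict the Dirichlet form \eqref{Diri} to the subgraph defined by $\omega^\star$ and solve the resulting one-dimensional birth-and-death problem explicitly. The upper bound is essentially a calculation once the set $\cA^\star$ has been identified and its boundary behaviour understood.
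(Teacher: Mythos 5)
Your overall strategy coincides with the paper's: the upper bound via the Dirichlet form of an indicator test function of a ``basin'' defined through communication heights, and the lower bound by restricting to a single optimal path and solving (or invoking monotonicity for) the resulting one-dimensional problem; the lower-bound half is essentially identical to the paper's computation and is fine, with $C_1=1/|\cX|$ legitimate once you take $\omega^\star$ simple.

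There is, however, a concrete slip in the upper bound: your claim that $\cA\subseteq\cA^\star$ ``by construction'' is false in general. For $\eta\in\cA$ one has $\Phi(\eta,\cA)=H(\eta)$, and nothing prevents $\cA$ from containing configurations with $H(\eta)\geq\Phi(\cA,\cB)$ (only the \emph{minimizing} element of $\cA$ is guaranteed to satisfy $H\leq\Phi(\cA,\cB)$, and even there equality can occur); for such $\eta$ the function $h=1_{\cA^\star}$ violates the constraint $h|_\cA\equiv 1$, so it is not admissible in \eqref{capa}. The repair is one line: take the test set to be $\cA\cup\{\eta\in\cX\colon\Phi(\eta,\cA)<\Phi(\cA,\cB)\}$ (or the paper's choice $K(\cA,\cB)=\{\eta\colon\Phi(\eta,\cA)\leq\Phi(\eta,\cB)\}$, which always contains $\cA$), and phrase the boundary estimate in terms of $\max\{H(\eta),H(\eta')\}\geq\Phi(\cA,\cB)$ rather than $H(\eta')\geq\Phi(\cA,\cB)$: your prepending argument still handles edges leaving the sublevel part of the set, while edges leaving the added high-energy configurations of $\cA$ are trivially controlled because there $H(\eta)\geq\Phi(\cA,\cB)$ already. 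With that modification your argument gives exactly the paper's bound $Z_\beta\CAPA_\beta(\cA,\cB)\leq C_2\,e^{-\beta\Phi(\cA,\cB)}$, and your boundary analysis for the sublevel set is in fact slightly more direct than the paper's two-step verification of \eqref{outcond}.
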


\begin{proof}
The proof is given in \cite{BdHN06}, Lemma 3.1.1. We repeat it here, because it uses 
basic properties of communication heights that provide useful insight.
 
\medskip\noindent
\underline{Upper bound}:
The upper bound is obtained from (\ref{capa}) by picking $h=1_{K(\cA,\cB)}$ with
\begin{equation}
\label{KABdef}
K(\cA,\cB) = \{\eta\in\cX \colon\,\Phi(\eta,\cA) \leq \Phi(\eta,\cB)\}.
\end{equation}
The key observation is that if $\eta \sim \eta'$ with $\eta \in K(\cA,\cB)$
and $\eta'\in \cX \backslash K(\cA,\cB)$, then
\begin{equation}
\label{outcond}
\begin{array}{lll}
&{\rm (1)} &H(\eta')<H(\eta),\\
&{\rm (2)} &H(\eta) \geq \Phi(\cA,\cB).
\end{array}
\end{equation}

To see (1), suppose that $H(\eta') \geq H(\eta)$. Clearly,
\begin{equation}
\label{imp1}
H(\eta') \geq H(\eta) \qquad \Longleftrightarrow\qquad 
\Phi(\eta',\cF)=\Phi(\eta,\cF) \vee H(\eta')
\,\,\,\forall\, \cF\subset\cX.
\end{equation}
But $\eta \in K(\cA,\cB)$ tells us that $\Phi(\eta,\cA) \leq \Phi(\eta,\cB)$, hence 
$\Phi(\eta',\cA) \leq \Phi(\eta',\cB)$ by (\ref{imp1}), and hence $\eta'\in K(\cA,\cB)$, 
which is a contradiction. 

To see (2), note that (1) implies the reverse of (\ref{imp1}):
\begin{equation}
\label{imp2}
H(\eta) \geq H(\eta') \qquad\Longleftrightarrow\qquad
\Phi(\eta,\cF)=\Phi(\eta',\cF) \vee H(\eta)
\,\,\,\forall\, \cF\subset\cX.
\end{equation}
Trivially, $\Phi(\eta,\cB)\geq H(\eta)$. We claim that equality holds. Indeed,
suppose that equality fails. Then we get
\begin{equation}
\label{imp3}
H(\eta) < \Phi(\eta,\cB) = \Phi(\eta',\cB) < \Phi(\eta',\cA) = \Phi(\eta,\cA),
\end{equation}
where the equalities come from (\ref{imp2}), while the second inequality uses 
that $\eta'\in \cX \backslash K(\cA,\cB)$. Thus, $\Phi(\eta,\cA)>\Phi(\eta,\cB)$,
which contradicts $\eta\in K(\cA,\cB)$. From $\Phi(\eta,\cB)=H(\eta)$ we obtain
$\Phi(\cA,\cB)\leq\Phi(\cA,\eta)\vee\Phi(\eta,\cB)=\Phi(\eta,\cB)=H(\eta)$, which 
proves (2).
  
Combining (\ref{outcond}) with (\ref{Gibbs}--\ref{rate}) and using reversibility, 
we find that
\begin{equation}
\label{muPhi}
\mu_\beta(\eta)c_\beta(\eta,\eta') 
\leq \frac{1}{Z_\beta}\, e^{-\beta\Phi(\cA,\cB)} \qquad \forall\,
\eta\in K(\cA,\cB),\,\eta'\in \cX\backslash K(\cA,\cB),\,\eta\sim\eta'.
\end{equation}
Hence
\begin{equation}
\label{hchoice1}
\CAPA_\beta(\cA,\cB) \leq \cE_\beta(1_{K(\cA,\cB)})
\leq C_2 \frac{1}{Z_\beta}\, e^{-\beta\Phi(\cA,\cB)}
\end{equation}
with $C_2 = |\{(\eta,\eta')\in\cX^2\colon\,\eta\in K(\cA,\cB),\eta'\in\cX\backslash
K(\cA,\cB),\eta\sim\eta'\}|$.

\medskip\noindent
\underline{Lower bound}:
The lower bound is obtained by picking any path $\omega=(\omega_0,\omega_1,\ldots,\omega_L)$ 
that realizes the minimax in $\Phi(\cA,\cB)$ and ignoring all the transitions that are not 
in this path, i.e.,
\begin{equation}
\label{path}
\CAPA_\beta(\cA,\cB) \geq \min_{{h\colon\omega\to [0,1]} \atop {h(\omega_0)=1,h(\omega_L)=0}} 
\cE^\omega_\beta(h),
\end{equation}
where the Dirichlet form $\cE^\omega_\beta$ is defined as $\cE_\beta$ in (\ref{Diri}) but 
with $\cX$ replaced by $\omega$. Due to the one-dimensional nature of the set $\omega$, 
the variational problem in the right-hand side can be solved explicitly by 
elementary computations. One finds that the minimum equals
\begin{equation}
\label{Mdef}
M = \left[\sum_{l=0}^{L-1} \frac{1}{\mu_\beta(\omega_l)c_\beta(\omega_l,\omega_{l+1})}\right]^{-1},
\end{equation}
and is uniquely attained at $h$ given by
\begin{equation}
\label{hminpath}
h(\omega_l) = M \sum_{k=0}^{l-1} \frac{1}{\mu_\beta(\omega_k)c_\beta(\omega_k,\omega_{k+1})},
\qquad l=0,1,\ldots,L.
\end{equation}
We thus have
\begin{equation}
\label{Mest}
\begin{aligned}
\CAPA_\beta(\cA,\cB) &\geq M\\ 
&\geq \frac{1}{L}\,\min_{l=0,1,\ldots,L-1} \mu_\beta(\omega_l)c_\beta(\omega_l,\omega_{l+1})\\
&= \frac{1}{K}\,\frac{1}{Z_\beta}\,\min_{l=0,1,\ldots,L-1}
e^{-\beta[H(\omega_l) \vee H(\omega_{l+1})]}\\
&= C_1\,\frac{1}{Z_\beta}\, e^{-\beta\Phi(\cA,\cB)}
\end{aligned}
\end{equation}
with $C_1=1/L$.
\end{proof}

%%%%%%%%%%%%%%%%%%%%%%%%%%%%%%

\subsection{Graph structure of the energy landscape}
\label{S2.2}

View $\cX$ as a graph whose \emph{vertices} are the configurations and whose \emph{edges}
connect communicating configurations, i.e., $(\eta,\eta')$ is an edge if and only if
$\eta\sim\eta'$. Define
\begin{itemize}
\item[--]
$\cX\starred$ is the subgraph of $\cX$ obtained by removing all vertices $\eta$ with
$H(\eta)>\Gamma\starred$ and all edges incident to these vertices;
\item[--]
$\cX\dstarred$ is the subgraph of $\cX\starred$ obtained by removing all vertices $\eta$ with
$H(\eta)=\Gamma\starred$ and all edges incident to these vertices;
\item[--]
$\cX_\Box$ and $\cX_\Boxplus$ are the connected components of $\cX\dstarred$ containing
$\Box$ and $\Boxplus$, respectively.
\end{itemize}

\begin{lemma}
\label{lemmavalleys}
The sets $\cX_\Box$ and $\cX_\Boxplus$ are disjoint (and hence are disconnected in 
$\cX\dstarred$), and
\begin{equation}
\label{valleys}
\begin{aligned}
\cX_\Box &= \{\eta\in\cX\colon\,\Phi(\eta,\Box) < \Phi(\eta,\Boxplus) = \Gamma\starred\},\\
\cX_\Boxplus &= \{\eta\in\cX\colon\,\Phi(\eta,\Boxplus) < \Phi(\eta,\Box) = \Gamma\starred\}.
\end{aligned}
\end{equation}
Moreover, $\proto \subset \cX_{\Box}$, and $\gate_\mathrm{att}(\hat{\eta})\cap \cX_\Boxplus 
\neq \emptyset$ for all $\hat{\eta}\in\proto$.
\end{lemma}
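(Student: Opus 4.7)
The plan is to handle the four assertions in sequence, basing each on the ultrametric inequality~(\ref{ultramet}) together with the identification $\Phi(\Box,\Boxplus)=\Gamma\starred$ (implicit in Lemma~\ref{lemptyismeta}). For disjointness, every path in $\cX$ from $\Box$ to $\Boxplus$ must pass through a configuration of energy $\geq\Gamma\starred$, and such configurations are absent from $\cX\dstarred$, so $\Box$ and $\Boxplus$ lie in distinct connected components. Next, I would establish the characterization~(\ref{valleys}) for $\cX_\Box$; the analogous statement for $\cX_\Boxplus$ follows by interchanging roles. If $\eta\in\cX_\Box$ then the connecting path in $\cX\dstarred$ witnesses $\Phi(\eta,\Box)<\Gamma\starred$; applying~(\ref{ultramet}) in both directions to the triple $\Box,\eta,\Boxplus$ then pins $\Phi(\eta,\Boxplus)=\Gamma\starred$. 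Conversely, if $\Phi(\eta,\Box)<\Gamma\starred$, the minimax path from $\eta$ to $\Box$ has maximum energy strictly below $\Gamma\starred$, placing $\eta$ in the connected component of $\Box$ inside $\cX\dstarred$.

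For $\proto\subset\cX_\Box$ I would invoke (H3-a) together with the identity $H(\gate)=H(\proto)+\Db=\Gamma\starred$ to get $H(\hat\eta)=\Gamma\starred-\Db<\Gamma\starred$ for every $\hat\eta\in\proto$; by the previous step it then suffices to show $\Phi(\hat\eta,\Box)<\Gamma\starred$. Using Definition~\ref{defdroplets-a}(b), $\hat\eta=\omega_{j-1}$ is the penultimate vertex on some optimal path $\omega\in(\Box\to\Boxplus)_\mathrm{opt}$ whose first meeting with $\cG(\Box,\Boxplus)$ occurs at $\omega_j$. I would argue that the prefix $\omega_0,\ldots,\omega_{j-1}$ can be chosen---or rerouted around dead-ends via the minimality in Definition~\ref{def3}(c)---so that every configuration on it has energy strictly below $\Gamma\starred$, producing the desired bound. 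Finally, $\gate_\mathrm{att}(\hat\eta)\cap\cX_\Boxplus\neq\emptyset$ is immediate from (H3-c): for every $\hat\eta\in\proto$ there exists $\zeta\in\gate_\mathrm{att}(\hat\eta)$ with $\Phi(\zeta,\Boxplus)<\Gamma\starred$, and by the symmetric form of~(\ref{valleys}) this $\zeta$ lies in $\cX_\Boxplus$.

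The main obstacle is the detouring argument needed to rule out the prefix $\omega_0,\ldots,\omega_{j-1}$ ever reaching the saddle height $\Gamma\starred$. A priori an optimal path may traverse dead-end saddles (elements of $\cS(\Box,\Boxplus)\setminus\cG(\Box,\Boxplus)$) before first entering $\cG(\Box,\Boxplus)$, which would only give $\Phi(\hat\eta,\Box)\leq\Gamma\starred$. I plan to handle this by combining the minimality clause of Definition~\ref{def3}(c)---each dead-end is avoided by some optimal path---with the structural information in (H3-a) that $\hat\eta$ is a single droplet fitting inside an $L\starred\times L\starred$ square in $\Lambda^-$. Together these should permit a local modification of the prefix whose maximum energy drops strictly below $\Gamma\starred$ while still terminating at $\hat\eta$, thereby promoting $\Phi(\hat\eta,\Box)\leq\Gamma\starred$ to strict inequality.
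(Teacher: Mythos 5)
Your treatment of the disjointness, of the characterization \eqref{valleys} (via the ultrametric inequality \eqref{ultramet} applied in both directions), and of the final claim $\gate_\mathrm{att}(\hat\eta)\cap\cX_\Boxplus\neq\emptyset$ (immediate from (H3-c) plus the second line of \eqref{valleys}) is correct and essentially identical to the paper's. The problem is the third assertion, $\proto\subset\cX_\Box$, i.e.\ $\Phi(\hat\eta,\Box)<\Gamma\starred$, which is the real content of the lemma: you explicitly leave it as an ``obstacle'' and only sketch a rerouting plan. As it stands this is a genuine gap, and the plan itself does not work. The minimality clause in Definition~\ref{def3}(c) (equivalently, the fact that every dead-end in $\cS(\Box,\Boxplus)\backslash\cG(\Box,\Boxplus)$ is avoided by \emph{some} optimal path) only lets you avoid one prescribed saddle configuration at a time, and the avoiding path need not end at your given $\hat\eta$ nor enter $\cG(\Box,\Boxplus)$ at the corresponding $\eta\in\entgate$; avoiding one dead-end may force the path through another, so these per-configuration statements do not combine into a single prefix from $\Box$ to $\hat\eta$ staying strictly below $\Gamma\starred$. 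Likewise, the geometric information in (H3-a) (a single droplet fitting in an $L\starred\times L\starred$ square) says nothing about the energy landscape between $\Box$ and $\hat\eta$; an explicit construction of a sub-$\Gamma\starred$ path would be exactly the kind of model-dependent analysis the hypotheses (H1)--(H3) are designed to replace, and it is not available at this level of generality.

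The paper closes this step with a purely combinatorial contradiction argument based on the gate structure, which you should adopt (or replace by a complete argument of your own). Fix $\hat\eta\in\proto$ and an $\eta\in\entgate$ obtained from $\hat\eta$ by adding a free particle of type $\tb$ in $\partial^-\Lambda$, and let $\Omega(\eta)$ be the (non-empty) set of optimal paths whose first entrance into $\cG(\Box,\Boxplus)$ is at $\eta$; each such path visits $\hat\eta$ just before $\eta$. Suppose, for contradiction, that every $\omega_i\in\Omega(\eta)$ reaches energy $\Gamma\starred$ strictly before $\eta$, and let $\sigma\starred_i(\eta)$ be the \emph{last} configuration at level $\Gamma\starred$ on $\omega_i$ before $\eta$. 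Then $(\entgate\backslash\eta)\cup\{\sigma\starred_1(\eta),\ldots\}$ is again a (minimal) gate, so these configurations belong to $\cG(\Box,\Boxplus)$; but then $\omega_i$ hits $\cG(\Box,\Boxplus)$ before $\eta$, contradicting the definition of $\Omega(\eta)$ and of $\entgate$ as the entrance set. Hence some path in $\Omega(\eta)$ stays strictly below $\Gamma\starred$ up to and including $\hat\eta$, which gives $\Phi(\Box,\hat\eta)<\Gamma\starred$ and thus $\hat\eta\in\cX_\Box$ by \eqref{valleys}. Note also that no rerouting or droplet geometry is needed anywhere in this argument.
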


\begin{proof}
By Definition~\ref{def2}(f), all paths connecting $\Box$ and $\boxplus$ reach energy level 
$\geq \Gamma\starred$. Therefore $\cX_\Box$ and $\cX_\boxplus$ are disconnected in $\cX\dstarred$ 
(because $\cX\dstarred$ does not contain vertices with energy $\geq\Gamma\starred$). 

First note that, by (H2) and (\ref{ultramet}), $\Gamma\starred=\Phi(\Box,\Boxplus)\leq
\max\{\comlev(\eta,\Box),\comlev(\eta,\Boxplus)\} \leq \Gamma\starred$, and hence either
$\comlev(\eta,\Box)=\Gamma\starred$ or $\comlev(\eta,\Boxplus)=\Gamma\starred$ or both.
To check the first line of (\ref{valleys}) we argue as follows. For any $\eta\in\cX_{\Box}$, 
we have $H(\eta)<\Gamma\starred$ (because $\cX_\Box\subset\cX\dstarred$) and $\Phi(\eta,\Box)
<\Gamma\starred$ (because $\cX$ is connected). Conversely, let $\eta$ be such that 
$\Phi(\eta,\Box)<\Gamma\starred$. Then $H(\eta)<\Gamma\starred$, hence $\eta\in\cX\dstarred$, 
and there is a path connecting $\eta$ and $\Box$ that stays below energy level $\Gamma\starred$. 
Therefore $\eta$ belongs to the connected component of $\cX\dstarred$ containing $\Box$, i.e., 
$\eta\in\cX_{\Box}$. The second line of (\ref{valleys}) is checked in an analogous manner.

To prove that $\proto\subset\cX_{\Box}$, we must show that $\Phi(\Box,\hat\eta)<\Gamma\starred$
for all $\hat\eta \in \proto$. Pick any $\hat\eta\in\proto$, and let $\eta\in\entgate$ be any 
configuration obtained from $\hat\eta$ by adding a particle of type $\tb$ somewhere in 
$\partial^-\Lambda$. Denote by $\Omega(\eta)$ the set of optimal paths from $\Box$ to $\boxplus$
that enter $\cG(\Box,\boxplus)$ via $\eta$ (note that this set is non-empty because $\entgate$ 
is a minimal gate by Definition~\ref{defdroplets-a}(a)). By Definition~\ref{defdroplets-a}(b), 
$\omega_i \in \Omega(\eta)$ visits $\hat\eta$ before $\eta$ for all $i \in 1,\ldots,|\Omega(\eta)|$.
The proof proceeds via contradiction. Suppose that $\max_{\sigma\in\omega_i\backslash
S_i(\eta)} H(\sigma) = \Gamma\starred$ for all $i \in 1,\ldots,|\Omega(\eta)|$, where 
$S_i(\eta)$ consists of $\eta$ and all its successors in $\omega_i$. Let $\sigma\starred_i(\eta)$ 
be the last configuration $\sigma\in\omega_i\backslash S_i(\eta)$ such that 
$H(\sigma) = \Gamma\starred$, and put $\cL(\eta)=\{\sigma\starred_1(\eta),\ldots,
\sigma\starred_{|\Omega(\eta)|}(\eta)\}$. Then the set $(\entgate\backslash\eta)\cup
\cL(\eta)$ is a minimal gate. But $\omega_i$ hits $\sigma\starred_i(\eta)$ before 
$\eta$, and so this contradicts the fact that $\entgate$ is the entrance set of 
$\cG(\Box,\boxplus)$.

The claim that $\gate_\mathrm{att}(\hat{\eta})\cap \cX_\Boxplus \neq \emptyset$ for all 
$\hat{\eta}\in\proto$ is immediate from (H3-c). 
\end{proof}

We now have all the geometric ingredients that are necessary for the proof of 
Theorems~\ref{tgate}--\ref{texp} along the lines of \cite{BdHN06}, Section 3.
Our hypotheses (H1)--(H3) replace the somewhat delicate and model-dependent 
geometric analysis for Kawasaki dynamics with one type of particle that was carried 
out in \cite{BdHN06}, Section 2. They are the \emph{minimal hypotheses} that are 
necessary to carry out the proof below. Their verification for our specific model will be given in
\cite{dHNTpr1} and \cite{dHNTpr2}.

%%%%%%%%%%%%%%%%%%%%%%%%%%%

\subsection{Metastable set, link between average nucleation time and capacity}
\label{Smcnt}

Bovier, Eckhoff, Gayrard and Klein~\cite{BEGK02} define metastable sets in terms of
capacities:

\begin{definition}
\label{defmetapta}
$\cA\subset\cX$ with $\cA \neq \emptyset$ is called a metastable set if
\begin{equation}
\label{metpair}
\lim_{\beta\to\infty} \frac{
\max_{\eta\notin\cA} \mu_\beta(\eta)/\CAPA_\beta(\eta,\cA)
}{
\min_{\eta\in\cA} \mu_\beta(\eta)/\CAPA_\beta(\eta,\cA\backslash\eta)
} 
= 0.
\end{equation}
\end{definition}

\noindent
The following key lemma, relying on hypotheses (H1)--(H2) and Definition~\ref{def2}(d)--(e), 
allows us to apply the theory in \cite{BEGK02}.

\begin{lemma}
\label{lemcapameta}
$\{\Box,\Boxplus\}$ is a metastable set in the sense of Definition~{\rm \ref{defmetapta}}.
\end{lemma}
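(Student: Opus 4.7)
The plan is to verify the ratio in Definition~\ref{defmetapta} tends to zero by bounding the numerator and denominator separately using the a priori capacity estimates of Lemma~\ref{lemmaprbds} together with Lemma~\ref{ltometapair}. The key point is that hypothesis (H2), via Lemma~\ref{ltometapair}, provides a uniform communication-height bound $V^\star < \Gamma\starred$ away from $\{\Box,\Boxplus\}$, which creates the necessary exponential gap.

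For the denominator, I would first compute $\min_{\eta \in \{\Box,\Boxplus\}} \mu_\beta(\eta)/\CAPA_\beta(\eta, \{\Box,\Boxplus\}\setminus\eta)$. Using $H(\Box)=0$, (H1) (which gives $H(\Boxplus) < 0$), and the upper a priori bound $\CAPA_\beta(\Box,\Boxplus) \leq (C_2/Z_\beta)\,e^{-\beta \Phi(\Box,\Boxplus)}$ with $\Phi(\Box,\Boxplus) = \Gamma\starred$, I obtain
\[
\frac{\mu_\beta(\Box)}{\CAPA_\beta(\Box,\Boxplus)} \geq \frac{1}{C_2}\,e^{\beta\Gamma\starred},
\]
and analogously $\mu_\beta(\Boxplus)/\CAPA_\beta(\Boxplus,\Box) \geq (1/C_2)\,e^{\beta(\Gamma\starred - H(\Boxplus))}$, which is even larger. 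Hence the denominator is bounded below by $(1/C_2)\,e^{\beta\Gamma\starred}$.

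For the numerator, fix any $\eta \notin \{\Box,\Boxplus\}$. Applying the lower a priori bound to the pair $(\eta,\{\Box,\Boxplus\})$ gives
\[
\CAPA_\beta(\eta,\{\Box,\Boxplus\}) \geq \frac{C_1}{Z_\beta}\,e^{-\beta \Phi(\eta,\{\Box,\Boxplus\})},
\]
so that
\[
\frac{\mu_\beta(\eta)}{\CAPA_\beta(\eta,\{\Box,\Boxplus\})} \leq \frac{1}{C_1}\,e^{\beta[\Phi(\eta,\{\Box,\Boxplus\}) - H(\eta)]} \leq \frac{1}{C_1}\,e^{\beta V\starred},
\]
where the last inequality invokes Lemma~\ref{ltometapair}. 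Since this bound is uniform in $\eta$, the numerator is at most $(1/C_1)\,e^{\beta V\starred}$.

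Dividing, the ratio in \eqref{metpair} is bounded above by $(C_2/C_1)\,e^{-\beta(\Gamma\starred - V\starred)}$, which tends to $0$ as $\beta \to \infty$ because (H2) guarantees $V\starred < \Gamma\starred$. The only subtlety I anticipate is being careful that the a priori constants $C_1, C_2$ may depend on the pair under consideration, but they are finite and $\beta$-independent, which is all that is needed for the exponential decay in $\beta$ to dominate. No further obstacle arises; this argument is essentially a direct consequence of the two a priori bounds combined with the energy-landscape gap supplied by (H1)--(H2).
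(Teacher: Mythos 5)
Your proof is correct and follows essentially the same route as the paper: bound the numerator by $e^{\beta V\starred}/C_1$ via Lemma~\ref{ltometapair} and the lower a priori bound, bound the denominator from below by $e^{\beta\Gamma\starred}/C_2$ (minimum attained at $\Box$, since $H(\Box)=0$ and $H(\Boxplus)<0$) via the upper a priori bound, and conclude from $V\starred<\Gamma\starred$ in (H2). Your remark about taking the finitely many pair-dependent constants uniformly is the right (and only) bookkeeping point.
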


\begin{proof}
By \eqref{Gibbs}, Lemma~\ref{ltometapair} and the lower bound in \eqref{aprbds}, the
numerator is bounded from above by $e^{V^{\star}\beta}/C_1 = e^{(\Gamma\starred-\delta)
\beta}/C_1$ for some $\delta>0$. By \eqref{Gibbs}, the definition of $\Gamma\starred$ and
the upper bound in \eqref{aprbds}, the denominator is bounded from below by $e^{\Gamma
\starred\beta}/C_2$ (with the minimum being attained at $\Box$).
\end{proof}

Lemma~\ref{lemcapameta} has an important consequence:

\begin{lemma}
\label{lexpectation}
$\E_\Box(\tau_\Boxplus) = [Z_\beta\CAPA_\beta(\Box,\boxplus)]^{-1}\,[1+o(1)]$ as 
$\beta\to\infty$.
\end{lemma}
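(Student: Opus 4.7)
The plan is to invoke the classical potential-theoretic identity
\begin{equation*}
\E_\Box(\tau_\boxplus)\,\CAPA_\beta(\Box,\boxplus) \,=\, \sum_{\eta\in\cX}\mu_\beta(\eta)\,h\starred_{\Box,\boxplus}(\eta),
\end{equation*}
which follows from reversibility combined with the Dirichlet problem \eqref{soleq} solved by $h\starred_{\Box,\boxplus}(\eta)=\P_\eta(\tau_\Box<\tau_\boxplus)$. Since $H(\Box)=0$ yields $\mu_\beta(\Box)=1/Z_\beta$ and $h\starred_{\Box,\boxplus}(\Box)=1$, the $\eta=\Box$ term in the sum contributes exactly $1/Z_\beta$, while $\eta=\boxplus$ contributes $0$. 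The task therefore reduces to establishing
\begin{equation*}
\sum_{\eta\in\cX\setminus\{\Box,\boxplus\}}\mu_\beta(\eta)\,h\starred_{\Box,\boxplus}(\eta) \,=\, o(1/Z_\beta) \qquad \text{as } \beta\to\infty.
\end{equation*}

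I would split this residual sum along the valley decomposition supplied by Lemma~\ref{lemmavalleys}. For $\eta\in\cX\setminus(\cX_\Box\cup\cX_\boxplus)$, every such configuration satisfies $H(\eta)\geq\Gamma\starred$, so the bound $h\starred\leq 1$ combined with the Gibbs weights gives $O(|\cX|e^{-\beta\Gamma\starred}/Z_\beta)=o(1/Z_\beta)$. For $\eta\in\cX_\Box\setminus\{\Box\}$, Lemma~\ref{lemptyisbottom} forces $H(\eta)>0$; finiteness of $\cX$ then provides a uniform gap $H(\eta)\geq\delta>0$, so again the crude bound $h\starred\leq 1$ is enough to kill the contribution.

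The hard part is the sum over $\cX_\boxplus\setminus\{\boxplus\}$, because configurations in that valley can have $H(\eta)<0$, so that $\mu_\beta(\eta)$ exceeds $1/Z_\beta$ and one must genuinely exploit the smallness of $h\starred_{\Box,\boxplus}(\eta)$ itself. The plan is to combine the geometric characterization $\Phi(\eta,\boxplus)<\Gamma\starred=\Phi(\eta,\Box)$ from Lemma~\ref{lemmavalleys} with the height bound $\Phi(\eta,\{\Box,\boxplus\})\leq H(\eta)+V\starred$ from Lemma~\ref{ltometapair} and the a priori capacity estimates of Lemma~\ref{lemmaprbds}. These yield $Z_\beta\,\CAPA_\beta(\eta,\{\Box,\boxplus\})\geq C_1\,e^{-\beta(H(\eta)+V\starred)}$ while $Z_\beta\,\CAPA_\beta(\eta,\Box)\leq C_2\,e^{-\beta\Gamma\starred}$. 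A standard capacity comparison for equilibrium potentials, essentially of the form
\begin{equation*}
h\starred_{\Box,\boxplus}(\eta)\,\leq\,\frac{\CAPA_\beta(\eta,\Box)}{\CAPA_\beta(\eta,\{\Box,\boxplus\})},
\end{equation*}
derivable by contrasting $h\starred_{\Box,\boxplus}$ with the harmonic measure on $\{\Box,\boxplus\}$, then produces $h\starred_{\Box,\boxplus}(\eta)\leq C\,e^{-\beta(\Gamma\starred-V\starred-H(\eta))}$, so that $\mu_\beta(\eta)h\starred_{\Box,\boxplus}(\eta)=O(e^{-\beta(\Gamma\starred-V\starred)}/Z_\beta)=o(1/Z_\beta)$ because $V\starred<\Gamma\starred$ by (H2). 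Summing the three estimates and dividing by $\CAPA_\beta(\Box,\boxplus)$ closes the proof. The only technical subtlety is the capacity comparison used to exponentially bound $h\starred_{\Box,\boxplus}$ on $\cX_\boxplus$; this is the essential ingredient that the metastability of $\{\Box,\boxplus\}$ established in Lemma~\ref{lemcapameta} makes available.
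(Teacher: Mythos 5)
Your route is genuinely different from the paper's and is essentially sound. The paper simply cites \cite{BEGK02}, Theorem 1.3(i), which gives $\E_\Box(\tau_\Boxplus)=[1+o(1)]\,\mu_\beta(\cR_\Box)/\CAPA_\beta(\Box,\Boxplus)$ with $\cR_\Box=\{\eta\colon h\starred_{\Box,\Boxplus}(\eta)\geq\tfrac12\}$ (this is what Lemma~\ref{lemcapameta} is for), and then shows $\mu_\beta(\cR_\Box)=[1+o(1)]\mu_\beta(\Box)$ via the inclusions $\cX_\Box\subset\cR_\Box\subset\cX\backslash\cX_\Boxplus$ (from Lemma~\ref{lh*est}) together with Lemmas~\ref{lemmavalleys} and \ref{lemptyisbottom}. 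You instead start from the exact mean-hitting-time identity and bound $\sum_{\eta\neq\Box,\Boxplus}\mu_\beta(\eta)h\starred_{\Box,\Boxplus}(\eta)$ by hand, i.e.\ you reprove the relevant part of the \cite{BEGK02} machinery in this concrete setting. Your handling of the delicate valley $\cX_\Boxplus$ is correct and is the key point: the renewal bound $h\starred_{\Box,\Boxplus}(\eta)\leq\CAPA_\beta(\eta,\Box)/\CAPA_\beta(\eta,\{\Box,\Boxplus\})$ (same argument as \eqref{renew}--\eqref{est2a}), combined with Lemma~\ref{lemmaprbds}, Lemma~\ref{ltometapair} and $\Phi(\eta,\Box)=\Gamma\starred$ from Lemma~\ref{lemmavalleys}, gives $h\starred_{\Box,\Boxplus}(\eta)\leq C\,e^{-\beta(\Gamma\starred-V^\star-H(\eta))}$, which beats the Gibbs weight uniformly since $V^\star<\Gamma\starred$ by (H2); the crude $e^{-\delta\beta}$ of Lemma~\ref{lh*est} would indeed not suffice there because $H(\eta)$ can be very negative. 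What each approach buys: yours is self-contained and makes the mechanism explicit; the paper's is shorter and recycles Lemma~\ref{lh*est}, which is needed anyway in the proof of Theorem~\ref{tnucltime}. (Minor attribution point: the exponential bound you use comes from Lemmas~\ref{lemmaprbds}, \ref{ltometapair}, \ref{lemmavalleys} and (H2); Lemma~\ref{lemcapameta} is never actually invoked in your argument.)

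One slip needs fixing: it is not true that every $\eta\in\cX\backslash(\cX_\Box\cup\cX_\Boxplus)$ has $H(\eta)\geq\Gamma\starred$. The wells $\cX_i$ of \eqref{setsb} lie outside both valleys yet have energy $<\Gamma\starred$. The repair is immediate with your own tools: such $\eta$ satisfy $\Phi(\eta,\Box)=\Phi(\eta,\Boxplus)=\Gamma\starred$ (see \eqref{setsa}--\eqref{setsb}), so in particular $\Phi(\eta,\Box)\leq\Phi(\eta,\Boxplus)$ and Lemma~\ref{lemptyisbottom} gives $H(\eta)>0$; finiteness of $\cX$ then yields a uniform gap, and the crude bound $h\starred_{\Box,\Boxplus}\leq1$ kills this contribution exactly as in your treatment of $\cX_\Box\backslash\Box$. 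With that amendment the proof closes.
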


\begin{proof}
According to \cite{BEGK02}, Theorem 1.3(i), we have
\begin{equation}
\E_\Box(\tau_\Boxplus) 
= \frac{\mu_{\beta}(\cR_{\Box})}{\CAPA_{\beta}(\Box,\Boxplus)}\,[1+o(1)]
\qquad \text{as } \beta \to \infty,
\end{equation}
where
\begin{equation}
\label{RBoxdef}
\cR_{\Box} = \big\{\eta\in\cX\colon\, 
\P_{\eta} (\tau_{\Box}<\tau_{\Boxplus}) \geq \P_{\eta}(\tau_{\Boxplus}<\tau_{\Box})\big\}.
\end{equation}
Recalling (\ref{h*def}), we can rewrite (\ref{RBoxdef}) as $\cR_{\Box}=\{\eta\in\cX\colon\,
h\starred_{\Box,\Boxplus}(\eta)\geq\tfrac12\}$. It follows from Lemma~\ref{lh*est} below that
\begin{equation}
\lim_{\b\to\infty} \min_{\eta\in\cX_\Box} h\starred_{\Box,\Boxplus}(\eta) = 1,
\qquad \lim_{\b\to\infty} \max_{\eta\in\cX_\Boxplus} h\starred_{\Box,\Boxplus}(\eta) = 0.
\end{equation}
Hence, for $\beta$ large enough,
\begin{equation}
\cX_{\Box}\subset\cR_{\Box}\subset\cX\backslash\cX_{\Boxplus}.
\end{equation}
By Lemma~\ref{lemmavalleys}, the second inclusion implies that $\Phi(\eta,\Box)\leq
\Phi(\eta,\Boxplus)$ for all $\eta\in\cR_{\Box}$. Therefore Lemma~\ref{lemptyisbottom} 
yields
\begin{equation}
\min_{\eta\in\cR_{\Box}\backslash\Box} H(\eta)>H(\Box)=0,
\end{equation}
which implies that $\mu_{\beta}(\cR_{\Box})/\mu_{\beta}(\Box) = 1+o(1)$. Since $\mu_{\beta}(\Box) 
= 1/Z_{\beta}$, the claim follows.
\end{proof}

Lemma~\ref{lexpectation} shows that the proof of Theorem~\ref{tnucltime} revolves around 
getting sharp bounds on $Z_\beta\CAPA_\beta(\Box,\Boxplus)$. The a priori estimates in 
Lemma \ref{lemmaprbds} serve as a jump board for the derivation of these bounds.

%%%%%%%%%%%%%%%%%%%%%%%%%%%%%%%%%%%%%%%

\subsection{Proof of Theorem~\ref{tnucltime}}
\label{S2.3}

Our starting point is Lemma~\ref{lexpectation}. Recalling (\ref{Diri}--\ref{h*def}),
our task is to show that
\begin{equation}
\label{Diricut}
\begin{aligned}
Z_\beta\CAPA_\beta(\Box,\Boxplus)
&= \tfrac12 \sum_{\eta,\eta'\in\cX} Z_\beta\mu_\beta(\eta)c_\beta(\eta,\eta')\,
[h\starred_{\Box,\Boxplus}(\eta)-h\starred_{\Box,\Boxplus}(\eta')]^2\\
&= [1+o(1)]\,\Theta\,e^{-\Gamma\starred\beta} \qquad \mbox{ as } \beta\to\infty,
\end{aligned}
\end{equation}
and to identify the constant $\Theta$, since \eqref{Diricut} will imply
\eqref{sharpasymp} with $\Theta=1/K$. This is done in four steps, organized
in Sections~\ref{Step1}--\ref{Step4}.

%%%%%%%%%%%

\subsubsection{Step 1: Triviality of $h\starred_{\Box,\Boxplus}$ on $\cX_\Box$,
$\cX_\Boxplus$ and $\cX\dstarred\backslash(\cX_\Box\cup\cX_\Boxplus)$}
\label{Step1}

For all $\eta\in\cX\backslash\cX\starred$ we have $H(\eta)>\Gamma\starred$, and so there exists
a $\delta>0$ such that $Z_\beta\mu_\beta(\eta) \leq e^{-(\Gamma\starred+\delta)\beta}$.
Therefore, we can replace $\cX$ by $\cX\starred$ in the sum in \eqref{Diricut} at the
cost of a prefactor $1+O(e^{-\delta\beta})$. Moreover, we have the following analogue 
of \cite{BdHN06}, Lemma 3.3.1.

\begin{lemma}
\label{lh*est}
There exist $C<\infty$ and $\delta>0$ such that
\begin{equation}
\label{h*triv}
\min_{\eta\in\cX_\Box} h\starred_{\Box,\Boxplus}(\eta) \geq 1-Ce^{-\delta\beta},
\qquad
\max_{\eta\in\cX_\Boxplus} h\starred_{\Box,\Boxplus}(\eta) \leq Ce^{-\delta\beta},
\qquad \forall\,\beta\in (0,\infty).
\end{equation}
\end{lemma}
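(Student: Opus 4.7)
The plan is to establish the two bounds by one and the same capacity-ratio argument, applied to $\eta \in \cX_\Box$ and $\eta \in \cX_\Boxplus$ respectively. Since $h\starred_{\Box,\Boxplus} = 1 - h\starred_{\Boxplus,\Box}$ and the roles of $\Box$ and $\Boxplus$ in Lemma~\ref{lemmavalleys} are symmetric, it suffices to prove the first bound, namely that $\P_\eta(\tau_\Boxplus < \tau_\Box) \leq C e^{-\delta\beta}$ uniformly in $\eta \in \cX_\Box$.

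Fix $\eta \in \cX_\Box$ and consider the stopping time $T := \tau_\Box \wedge \tau_\Boxplus \wedge \tau_\eta$. Each excursion from $\eta$ either returns to $\eta$ or enters $\{\Box,\Boxplus\}$, so iterating the strong Markov property at $T$ and summing the resulting geometric series yields
\begin{equation*}
\P_\eta(\tau_\Boxplus < \tau_\Box) = \frac{\P_\eta(\tau_\Boxplus < \tau_\Box \wedge \tau_\eta)}{\P_\eta(\tau_\Box \wedge \tau_\Boxplus < \tau_\eta)} \leq \frac{\P_\eta(\tau_\Boxplus < \tau_\eta)}{\P_\eta(\tau_\Box < \tau_\eta) - \P_\eta(\tau_\Boxplus < \tau_\eta)}.
\end{equation*}
Specializing \eqref{caprep} to the singleton $\{\eta\}$ gives $\CAPA_\beta(\eta,\cF) = \mu_\beta(\eta) c_\beta(\eta,\cX\setminus\eta) \P_\eta(\tau_\cF < \tau_\eta)$; the factor $\mu_\beta(\eta) c_\beta(\eta,\cX\setminus\eta)$ therefore cancels between numerator and denominator, and the bound rewrites as
\begin{equation*}
\P_\eta(\tau_\Boxplus < \tau_\Box) \leq \frac{\CAPA_\beta(\eta,\Boxplus)}{\CAPA_\beta(\eta,\Box) - \CAPA_\beta(\eta,\Boxplus)}.
\end{equation*}

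Now I feed in the a priori estimates of Lemma~\ref{lemmaprbds}. By Lemma~\ref{lemmavalleys}, $\Phi(\eta,\Boxplus) = \Gamma\starred$ for $\eta \in \cX_\Box$, hence $\CAPA_\beta(\eta,\Boxplus) \leq C_2 Z_\beta^{-1} e^{-\beta\Gamma\starred}$, while $\CAPA_\beta(\eta,\Box) \geq C_1 Z_\beta^{-1} e^{-\beta\Phi(\eta,\Box)}$. Setting $\delta := \Gamma\starred - \max_{\eta \in \cX_\Box} \Phi(\eta,\Box)$, finiteness of $\cX$ together with Lemma~\ref{lemmavalleys} yields $\delta > 0$; taking worst-case values of $C_1,C_2$ over the finitely many relevant pairs, for $\beta$ large enough the subtracted term in the denominator is dominated by half the first, producing $\P_\eta(\tau_\Boxplus < \tau_\Box) \leq (2C_2/C_1) e^{-\delta\beta}$. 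Enlarging $C$ to absorb the trivial behaviour on a bounded initial interval of $\beta$ extends the estimate to all $\beta \in (0,\infty)$; the symmetric argument, with $\Box$ and $\Boxplus$ swapped and the second line of Lemma~\ref{lemmavalleys} replacing the first, delivers the second bound. The renewal identity and the a priori estimates are stock ingredients; the real content is the uniform strict positivity of $\delta$, supplied by Lemma~\ref{lemmavalleys} via $\Phi(\eta,\Box) < \Gamma\starred$ on $\cX_\Box$, and this is the only step where any genuine use is made of the hypotheses.
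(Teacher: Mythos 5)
Your proposal is correct and takes essentially the same route as the paper's proof: the same renewal identity, the conversion of the resulting ratio into a capacity ratio via \eqref{caprep}, the a priori bounds of Lemma~\ref{lemmaprbds}, and the strict inequality $\Phi(\eta,\Box)<\Phi(\eta,\Boxplus)=\Gamma\starred$ on $\cX_\Box$ from Lemma~\ref{lemmavalleys} as the source of $\delta>0$ (the paper simply lower-bounds the denominator by $\P_\eta(\t_\Box<\t_\eta)$ rather than subtracting and absorbing the extra term). The only point you leave untreated is $\eta=\Box$ (and $\eta=\Boxplus$), where $\CAPA_\beta(\eta,\Box)$ is undefined; this is harmless since $h\starred_{\Box,\Boxplus}(\Box)=1$ by the boundary condition, and the paper disposes of it with a one-line convex-combination remark.
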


\begin{proof}
A standard renewal argument gives the relations, valid for $\eta\notin\{\Box,\boxplus\}$,
\be{renew}
\P_\eta(\t_\boxplus<\t_\Box) = \frac{\P_\eta(\t_\boxplus<\t_{\Box\cup\eta})}
{1-\P_\eta(\t_{\Box\cup\boxplus}>\t_\eta)}, \qquad
\P_\eta(\t_\Box<\t_\boxplus) = \frac{\P_\eta(\t_\Box<\t_{\boxplus\cup\eta})}
{1-\P_\eta(\t_{\Box\cup\boxplus}>\t_\eta)}.
\ee
	
For $\eta\in\cX_\Box\backslash\Box$, we estimate
\be{est1}
h_{\Box,\boxplus}\starred(\eta)
= 1-\P_\eta(\t_\boxplus<\t_\Box)
= 1-\frac{\P_\eta(\t_\boxplus<\t_{\Box\cup\eta})}
{\P_\eta(\t_{\Box\cup\boxplus}<\t_\eta)}
\geq 1-\frac{\P_\eta(\t_\boxplus<\t_\eta)}
{\P_\eta(\t_\Box<\t_\eta)}
\ee
and, with the help of (\ref{caprep}) and Lemma \ref{lemmaprbds},
\be{est1a}
\frac{\P_\eta(\t_\boxplus<\t_\eta)}{\P_\eta(\t_\Box<\t_\eta)}
= \frac{Z_\b\,\CAPA_\b(\eta,\boxplus)}{Z_\b\,\CAPA_\b(\eta,\Box)}
\leq C(\eta)\,e^{-[\Phi(\eta,\boxplus)-\Phi(\eta,\Box)]\b}
\leq C(\eta)\,e^{-\d\b},
\ee
which proves the first claim with $C=\max_{\eta\in\cX_\Box\backslash\Box} C(\eta)$. Note that 
$h\starred_{\Box,\boxplus}(\Box)$ is a convex combination of $h_{\Box,\boxplus}\starred(\eta)$ with 
$\eta\in\cX_\Box\backslash\Box$, and so the claim includes $\eta=\Box$.
	
For $\eta\in\cX_\boxplus\backslash\boxplus$, we estimate
\be{est2}
h_{\Box,\boxplus}\starred(\eta)
= \P_\eta(\t_\Box<\t_\boxplus)
= \frac{\P_\eta(\t_\Box<\t_{\boxplus\cup\eta})}
{\P_\eta(\t_{\Box\cup\boxplus}<\t_\eta)}
\leq \frac{\P_\eta(\t_\Box<\t_\eta)}
{\P_\eta(\t_\boxplus<\t_\eta)}
\ee
and, with the help of (\ref{caprep}) and Lemma \ref{lemmaprbds},
\be{est2a}
\frac{\P_\eta(\t_\Box<\t_\eta)}{\P_\eta(\t_\boxplus<\t_\eta)}
= \frac{Z_\b\,\CAPA_\b(\eta,\Box)}{Z_\b\,\CAPA_\b(\eta,\boxplus)}
\leq C(\eta)\,e^{-[\Phi(\eta,\Box)-\Phi(\eta,\boxplus)]\b}
\leq C(\eta)\,e^{-\d\b},
\ee
which proves the second claim with $C=\max_{\eta\in\cX_\boxplus\backslash\boxplus}C(\eta)$.
\end{proof}

In view of Lemma~\ref{lh*est}, $h\starred_{\Box,\Boxplus}$ is trivial on the set 
$\cX_\Box\cup\cX_\Boxplus$, and its contribution to the sum in \eqref{Diricut}, 
which is $O(e^{-\d\b})$, can be accounted for by the prefactor $1+o(1)$. Consequently, 
all that is needed is to understand what $h\starred_{\Box,\Boxplus}$ looks like on the set
\begin{equation}
\label{setsa}
\cX\starred\backslash(\cX_\Box\cup\cX_\Boxplus) = \{\eta\in\cX\starred\colon\,
\Phi(\eta,\Box) = \Phi(\eta,\Boxplus) = \Gamma\starred\}.
\end{equation}
However, $h\starred_{\Box,\Boxplus}$ is also trivial on the set
\begin{equation}
\label{setsb}
\cX\dstarred\backslash(\cX_\Box\cup\cX_\Boxplus) = \bigcup_{i=1}^I \cX_i,
\end{equation}
which is a union of wells $\cX_i$, $i=1,\dots,I$, in $\cS(\Box,\Boxplus)$ for some
$I\in\N$. (Each $\cX_i$ is a maximal set of communicating configurations with energy 
$<\Gamma\starred$ and with communication height $\Gamma\starred$ towards both $\Box$ and
$\Boxplus$.) Namely, we have the following analogue of \cite{BdHN06}, Lemma 3.3.2.

\begin{lemma}
\label{lh*trivialinXi}
There exist $C<\infty$ and $\d>0$ such that
\begin{equation}
\label{h*trivalt}
\max_{\eta,\eta'\in\cX_{i}} |h\starred_{\Box,\Boxplus}(\eta)-h\starred_{\Box,\Boxplus}(\eta')| 
\leq Ce^{-\delta\beta} \qquad \forall\,i = 1,\ldots,I,\,\beta\in (0,\infty).
\end{equation}
\end{lemma}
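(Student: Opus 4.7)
The plan is to mimic the proof of Lemma 3.3.2 in \cite{BdHN06}, using the a~priori capacity estimates of Lemma~\ref{lemmaprbds} together with the fact that inside a single well $\cX_i$ communication heights stay strictly below $\Gamma\starred$.

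First I observe that, because $\cX_i$ is a connected component of $\cX\dstarred$, any two configurations $\eta,\eta'\in\cX_i$ are joined by a path inside $\cX_i$ along which the energy stays strictly below $\Gamma\starred$; since $\cX$ is finite, there exists $\delta_0>0$, depending only on the model, such that
\begin{equation}
\Phi(\eta,\eta')\leq\Gamma\starred-\delta_0
\qquad\forall\,\eta,\eta'\in\cX_i,\; i=1,\ldots,I,
\end{equation}
while by definition of $\cX_i$ one has $\Phi(\eta,\{\Box,\Boxplus\})=\Gamma\starred$ for all $\eta\in\cX_i$.

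Next I run the usual renewal decomposition. For $\eta,\eta'\in\cX_i\subset\cX\setminus\{\Box,\Boxplus\}$ the Markov property at the first visit to $\{\Box,\Boxplus,\eta'\}$ gives
\begin{equation}
h\starred_{\Box,\Boxplus}(\eta)
=\P_\eta(\tau_{\eta'}<\tau_{\Box\cup\Boxplus})\,h\starred_{\Box,\Boxplus}(\eta')
+\P_\eta(\tau_\Box<\tau_{\eta'\cup\Boxplus}),
\end{equation}
so that, using $0\leq h\starred_{\Box,\Boxplus}\leq 1$,
\begin{equation}
|h\starred_{\Box,\Boxplus}(\eta)-h\starred_{\Box,\Boxplus}(\eta')|
\leq 2\,\P_\eta(\tau_{\Box\cup\Boxplus}<\tau_{\eta'}).
\end{equation}
The cycle (excursion) argument starting and ending at $\eta$ yields
\begin{equation}
\P_\eta(\tau_{\Box\cup\Boxplus}<\tau_{\eta'})
=\frac{\P_\eta(\tau_{\Box\cup\Boxplus}<\tau_{\eta\cup\eta'})}
{\P_\eta(\tau_{\Box\cup\Boxplus\cup\eta'}<\tau_\eta)}
\leq\frac{\P_\eta(\tau_{\Box\cup\Boxplus}<\tau_\eta)}
{\P_\eta(\tau_{\eta'}<\tau_\eta)}
=\frac{\CAPA_\beta(\eta,\Box\cup\Boxplus)}{\CAPA_\beta(\eta,\eta')},
\end{equation}
where the last equality uses the identity $\P_\eta(\tau_\cA<\tau_\eta)=\CAPA_\beta(\eta,\cA)/[\mu_\beta(\eta)\,c_\beta(\eta,\cX\setminus\eta)]$ (obtained by comparing \eqref{caprep} with the same formula applied to the singleton $\eta$) in both numerator and denominator.

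Finally, I insert the a~priori bounds of Lemma~\ref{lemmaprbds}: the numerator is at most $C_2\,e^{-\beta\Gamma\starred}/Z_\beta$ because $\Phi(\eta,\{\Box,\Boxplus\})=\Gamma\starred$, while the denominator is at least $C_1\,e^{-\beta(\Gamma\starred-\delta_0)}/Z_\beta$. Taking ratios, the $Z_\beta$ cancels and we obtain
\begin{equation}
|h\starred_{\Box,\Boxplus}(\eta)-h\starred_{\Box,\Boxplus}(\eta')|
\leq 2\,\frac{C_2}{C_1}\,e^{-\delta_0\beta},
\end{equation}
which is the desired estimate with $\delta=\delta_0$ and $C=2C_2/C_1$ (the constants depending on $\cX_i$ being absorbed by taking the maximum over the finitely many wells). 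The only delicate point is verifying the renewal identity/bound in the displayed cycle-argument step: one must ensure that $\eta,\eta'$ lie outside $\{\Box,\Boxplus\}$ (true since $\cX_i\cap\{\Box,\Boxplus\}=\emptyset$ by Lemma~\ref{lemmavalleys}) so that the excursions from $\eta$ before absorption at $\{\Box,\Boxplus,\eta'\}$ are well defined; otherwise the argument is entirely mechanical.
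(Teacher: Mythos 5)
Your proposal is correct and follows essentially the same route as the paper's proof: a renewal (cycle) decomposition reducing $|h\starred_{\Box,\Boxplus}(\eta)-h\starred_{\Box,\Boxplus}(\eta')|$ to a ratio of capacities, which is then controlled by the a priori estimates of Lemma~\ref{lemmaprbds} together with $\Phi(\eta,\eta')<\Gamma\starred=\Phi(\eta,\{\Box,\Boxplus\})$ inside a well. The only cosmetic difference is that you decompose symmetrically at the first visit to $\{\Box,\Boxplus,\eta'\}$ to get the two-sided bound directly, whereas the paper proves the one-sided bound $h\starred_{\Box,\Boxplus}(\eta)\leq \P_\eta(\tau_\Box<\tau_{\eta'})+h\starred_{\Box,\Boxplus}(\eta')$ (with $\eta'$ the energy minimum of $\cX_i$, a choice that is not actually needed) and then interchanges $\eta$ and $\eta'$.
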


\begin{proof}
Fix $i$. Let $\h' \in \cX_{i}$ be such that $\min_{\sigma\in\cX_{i}} H(\sigma)= H(\eta_{i})$ 
and pick $\h \in \cX_{i}$. Estimate
\be{Bd1}
h\starred_{\Box,\boxplus}(\h) = \P_\h(\t_\Box<\t_\boxplus)
\leq \P_\h(\t_\Box<\t_{\h'}) + \P_\h(\t_{\h'}<\t_\Box<\t_\boxplus).
\ee
First, as in the proof of Lemma \ref{lh*est}, we have 
\be{Bd2}
\begin{aligned}
\P_\h(\t_\Box<\t_{\h'}) &= \frac{\P_\h(\t_\Box<\t_{\h\cup\h'})}
{1-\P_\h(\t_{\Box\cup\h'}>\t_\h)}
\leq \frac{\P_\h(\t_\Box<\t_\h)}
{\P_\h(\t_{\h'}<\t_\h)}\\
&=\frac{Z_\b\CAPA_\b(\h,\Box)}{Z_\b\CAPA_\b(\h,\h')}
\leq C(\h,\h')\,e^{-[\Phi(\h,\Box)-\Phi(\h,\h')]\b}
\leq C(\h,\h')\,e^{-\d\b},
\end{aligned}
\ee
where we use that $\Phi(\h,\Box) = \Gamma\starred$ and $\Phi(\h,\h')<\Gamma\starred$. Second,
\be{Bd3}
\P_\h(\t_{\h'}<\t_\Box<\t_\boxplus) 
= \P_\h(\t_{\h'}<\t_{\Box\cup\boxplus})
\P_{\h'}(\t_\Box<\t_\boxplus)
\leq \P_{\h'}(\t_\Box<\t_\boxplus)
= h\starred_{\Box,\boxplus}(\h').
\ee
Combining (\ref{Bd1}--\ref{Bd3}), we get
\be{Bd4}
h\starred_{\Box,\boxplus}(\h) \leq C(\h,\h')\,e^{-\d\b} 
+ h\starred_{\Box,\boxplus}(\h').
\ee
Interchanging $\eta$ and $\eta'$,we get the claim with $C=\max_i\max_{\h,\h'\in\cX_i} C(\h,\h')$.
\end{proof}

In view of Lemma~\ref{lh*trivialinXi}, the contribution to the sum in \eqref{Diricut} of the 
transitions inside a well can also be put into the prefactor $1+o(1)$. Thus, only the transitions 
\emph{in and out of wells} contribute.

%%%%%%%%%%%%%%

\subsubsection{Step 2: Variational formula for $K$}
\label{Step2} 

By Step 1, the estimation of $Z_\beta\CAPA_\beta(\Box,\Boxplus)$ reduces to the study 
of a simpler variational problem. The following is the analogue of \cite{BdHN06}, Proposition 3.3.3. 

\begin{lemma}
\label{lreduction}
$Z_\beta\CAPA_\beta(\Box,\Boxplus) = [1+o(1)]\,\Theta\,e^{-\Gamma\starred\beta}$ as $\beta\to\infty$
with
\begin{equation}
\label{redvp}
\Theta = 
\min_{C_1\ldots,C_I} \min_{ {h\colon\,\cX\starred \to [0,1]} 
\atop {h|_{\cX_\Box} \equiv 1,\,h|_{\cX_\Boxplus} \equiv 0,
\,h|_{\cX_i} \equiv C_i\,\forall\,i=1,\ldots,I} }
\qquad \tfrac12 \sum_{\eta,\eta'\in\cX\starred}
1_{\{\eta\sim\eta'\}}\, [h(\eta)-h(\eta')]^2.
\end{equation}
\end{lemma}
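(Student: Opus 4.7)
The plan is to prove \eqref{redvp} by establishing matching upper and lower bounds on $Z_\beta\CAPA_\beta(\Box,\Boxplus)$ via the Dirichlet principle \eqref{capa}, using that, up to a multiplicative $1+o(1)$, only edges of $\cX$ incident to a vertex at energy exactly $\Gamma\starred$ carry nontrivial weight. As a preliminary step, any edge $(\eta,\eta')$ with an endpoint outside $\cX\starred$ satisfies $H(\eta)\vee H(\eta')\geq\Gamma\starred+\delta$ for some $\delta>0$ depending only on $\cX$ (which is finite), and so by reversibility $\mu_\beta(\eta)c_\beta(\eta,\eta')=\frac{1}{Z_\beta}e^{-\beta[H(\eta)\vee H(\eta')]}=O(e^{-\beta(\Gamma\starred+\delta)}/Z_\beta)$. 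Since every test function is bounded by $1$, all such edges contribute at most $O(e^{-\beta(\Gamma\starred+\delta)}/Z_\beta)$ to any Dirichlet form and may be absorbed into the $[1+o(1)]$ prefactor.

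For the upper bound, I take a minimizer $h$ of the right-hand side of \eqref{redvp} and extend it to $\cX$ in any way with values in $[0,1]$ (e.g.\ $h\equiv\tfrac12$ off $\cX\starred$). By Lemma~\ref{lemmavalleys} and \eqref{setsb}, every edge $(\eta,\eta')$ of $\cX\starred$ with both endpoints in $\cX\dstarred$ lies in a single connected component ($\cX_\Box$, $\cX_\Boxplus$, or some $\cX_i$), and therefore contributes zero since $h$ is constant on that component. The remaining edges in $\cX\starred$ have at least one endpoint at energy exactly $\Gamma\starred$, so $\mu_\beta(\eta)c_\beta(\eta,\eta')=e^{-\beta\Gamma\starred}/Z_\beta$. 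Plugging $h$ into \eqref{capa} and using the preliminary reduction yields
\[
Z_\beta\CAPA_\beta(\Box,\Boxplus)\leq[1+o(1)]\,\Theta\,e^{-\beta\Gamma\starred}.
\]

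For the lower bound, I evaluate the Dirichlet form at $h\starred_{\Box,\Boxplus}$ and approximate it by a piecewise-constant function. Define $\tilde h$ on $\cX\starred$ by $\tilde h\equiv 1$ on $\cX_\Box$, $\tilde h\equiv 0$ on $\cX_\Boxplus$, $\tilde h\equiv C_i:=h\starred_{\Box,\Boxplus}(\sigma_i)$ on each well $\cX_i$ (for a fixed $\sigma_i\in\cX_i$), and $\tilde h:=h\starred_{\Box,\Boxplus}$ on every $\eta\in\cX\starred$ with $H(\eta)=\Gamma\starred$. Lemmas~\ref{lh*est} and \ref{lh*trivialinXi} give $\|\tilde h-h\starred_{\Box,\Boxplus}\|_\infty=O(e^{-\delta\beta})$ on $\cX\starred$, and intra-well edges of weight at most $e^{-\beta\Gamma\starred}/Z_\beta$ contribute only $O(e^{-\beta(\Gamma\starred+2\delta)}/Z_\beta)$ to $\cE_\beta(h\starred_{\Box,\Boxplus})$. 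Since $\tilde h\in[0,1]$ is admissible in \eqref{redvp}, its restricted Dirichlet sum is bounded below by $\Theta$, yielding
\[
Z_\beta\CAPA_\beta(\Box,\Boxplus)\geq[1-o(1)]\,\Theta\,e^{-\beta\Gamma\starred}.
\]

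The main obstacle is the edge accounting, namely the dichotomy that every edge of $\cX\starred$ is either intra-component (killed by the constancy of $\tilde h$, with a squared error $O(e^{-2\delta\beta})$ on an edge of weight at most $e^{-\beta\Gamma\starred}/Z_\beta$) or incident to a vertex at energy exactly $\Gamma\starred$ (producing the clean prefactor $e^{-\beta\Gamma\starred}/Z_\beta$). This is precisely what Lemma~\ref{lemmavalleys} together with \eqref{setsb} supply, by identifying the connected components of $\cX\dstarred$ as $\cX_\Box$, $\cX_\Boxplus$, and the wells $\cX_1,\ldots,\cX_I$. Note that hypothesis (H3) plays no role in this reduction; it enters only afterwards, in the identification of $\Theta$ with $1/K$ and the geometry of critical droplets underlying \eqref{Kasymp}.
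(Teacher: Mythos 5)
Your proof is correct and follows essentially the same route as the paper: restriction to $\cX\starred$ at cost $O(e^{-(\Gamma\starred+\delta)\beta})$, the identification of the components of $\cX\dstarred$ as $\cX_\Box,\cX_\Boxplus,\cX_1,\dots,\cX_I$, the triviality Lemmas~\ref{lh*est}--\ref{lh*trivialinXi} to force approximately constant values there, and the observation that every remaining edge of $\cX\starred$ carries weight exactly $e^{-\Gamma\starred\beta}/Z_\beta$. Packaging this as explicit matching upper and lower bounds (test function from the reduced problem, versus a piecewise-constant approximation of $h\starred_{\Box,\Boxplus}$) rather than the paper's replacement of approximate by sharp boundary conditions is only a cosmetic difference, and you correctly note that only (H1)--(H2) are used.
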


\begin{proof}
First, recalling (\ref{Gibbs}--\ref{rate}) and (\ref{Diri}--\ref{capa}), we have
\be{red1a}
\begin{aligned}
Z_\b\,\CAPA_\b(\Box,\boxplus) &= Z_\b\,\min_{{h\colon\,\cX\to[0,1]} 
\atop {h(\Box)=1,\,h(\boxplus)=0}} 
\tfrac12 \sum_{\h,\h'\in\cX}
\mu_\b(\h)c_\b(\h,\h')[h(\h)-h(\h')]^2\\
&=O\left(e^{-(\Gamma\starred+\d)\b}\right)  
+ Z_\b\,\min_{{h\colon\,\cX\starred\to[0,1]} 
\atop {h(\Box)=1,\,h(\boxplus)=0}} 
\tfrac12 \sum_{\h,\h'\in\cX\starred}
\mu_\b(\h)c_\b(\h,\h')[h(\h)-h(\h')]^2.
\end{aligned}
\ee
Next, with the help of Lemmas \ref{lh*est}--\ref{lh*trivialinXi}, we get 
\be{red1b}
\begin{aligned}
&\min_{{h\colon\,\cX\starred\to[0,1]} \atop {h(\Box)=1,\,h(\boxplus)=0}} 
\tfrac12 \sum_{\h,\h'\in\cX\starred}
\mu_\b(\h)c_\b(\h,\h')[h(\h)-h(\h')]^2\\
&= \min_{{h\colon\,\cX\starred\to[0,1]} \atop {h=h\starred_{\Box,\boxplus}
\mbox{ on } \cX_\Box\cup\cX_\boxplus\cup(\cX_1,\dots,\cX_I)}} 
\tfrac12 \sum_{\h,\h'\in\cX\starred}
\mu_\b(\h)c_\b(\h,\h')[h(\h)-h(\h')]^2\\
&= [1+O(e^{-\d\b})]\,\min_{C_1,\dots,C_I}
\min_{{h\colon\,\cX\starred\to[0,1]} \atop {h|_{\cX_\Box}\equiv 1,\,
h|_{\cX_\boxplus}\equiv 0,\,h|_{\cX_i}\equiv C_i\,\forall\,i=1,\dots,I}}
\tfrac12 \sum_{\h,\h'\in\cX\starred}
\mu_\b(\h)c_\b(\h,\h')[h(\h)-h(\h')]^2,
\end{aligned}
\ee
where the error term $O(e^{-\d\b})$ arises after we replace the \emph{approximate} 
boundary conditions
\be{bcapprox1}
h=\left\{\begin{array}{ll}
1-O(e^{-\d\b})    &\mbox{on } \cX_\Box,\\
O(e^{-\d\b})      &\mbox{on } \cX_\boxplus,\\
C_i+O(e^{-\d\b})  &\mbox{on } \cX_i,\,i=1,\dots,I,
\end{array}
\right.
\ee
by the \emph{sharp} boundary conditions
\be{bcapprox2}
h=\left\{\begin{array}{ll}
1     &\mbox{on } \cX_\Box,\\
0     &\mbox{on } \cX_\boxplus,\\
C_i   &\mbox{on } \cX_i,\,i=1,\dots,I.
\end{array}
\right.
\ee
Finally, by (\ref{Gibbs}--\ref{rate}) and reversibility, we have
\be{red1c}
\begin{array}{ll}
&Z_\b\mu_\b(\h)c_\b(\h,\h') = 1_{\{\h\sim\h'\}}\, e^{-\Gamma\starred\b}
\mbox{ for all } \h,\h'\in\cX\starred \mbox{ that are not either}\\
&\mbox{both in } \cX_\Box \mbox{ or both in } \cX_\boxplus 
\mbox{ or both in } \cX_i \mbox{ for some } i=1,\dots,I.
\end{array}
\ee
To check the latter, note that there are no allowed moves between these sets, so that
either $H(\eta)=\Gamma\starred>H(\eta')$ or $H(\eta)<\Gamma\starred=H(\eta')$ for allowed 
moves in and out of these sets.
\end{proof}

Combining Lemmas~\ref{lexpectation} and \ref{lreduction}, we see that we have completed 
the proof of \eqref{sharpasymp} with $K=1/\Theta$. The variational formula for $\Theta
=\Theta(\Lambda;U,\Da,\Db)$ is \emph{non-trivial} because it depends on the geometry of 
the wells $\cX_i$, $i=1,\ldots,I$.

%%%%%%%%%%%%%%

\subsubsection{Step 3: Bounds on $K$ in terms of capacities of simple random walk}
\label{Step3}

So far we have \emph{only} used (H1)--(H2). In the remainder of the proof we use (H3) to
prove (\ref{Kasymp}). The intuition behind (\ref{Kasymp}) is the following. When the 
free particle attaches itself to the protocritial droplet, the dynamics enters the 
set $\gate_\mathrm{att}$. The entrance configurations of $\gate_\mathrm{att}$ are 
either in $\cX_\Boxplus$ or in one of the $\cX_i$'s. In the former case the path can 
reach $\Boxplus$ while staying below $\Gamma\starred$ in energy, in the latter case 
it cannot. By Lemma~\ref{lbacktrack}, if the path exits an $\cX_i$, then for it to 
return to $\cX_\Box$ it must pass through $\entgate$, i.e., it must go through a 
series of configurations consisting of a single protocritical droplet and a free 
particle moving away from that protocritical droplet towards $\partial^-\Lambda$. 
Now, this backward motion has a small probability because simple random walk in 
$\Z^2$ is \emph{recurrent}, namely, the probability is $[1+o(1)]\,4\pi/\log|\Lambda|$ 
as $\Lambda \to\Z^2$ (see \cite{BdHN06}, Equation (3.4.5)). Therefore, the free 
particle is likely to re-attach itself to the protocritical droplet before it manages 
to reach $\partial^-\Lambda$. Consequently, with a probability tending to 1 as 
$\Lambda\to\Z^2$, before the free particle manages to reach $\partial^-\Lambda$ it 
will re-attach itself to the protocritical droplet in all possible ways, which must 
include a way such that the dynamics enters $\cX_\Boxplus$. In other words, after 
entering $\gate_\mathrm{att}$ the path is likely to reach $\cX_\Boxplus$ before it 
returns to $\cX_\Box$, i.e., it ``goes over the hill''. Thus, in the limit as 
$\Lambda\to\Z^2$, the $\cX_i$'s become \emph{irrelevant}, and the dominant role is 
played by the transitions in and out of $\cX_\square$ and by the simple random walk 
performed by the free particle.

\br{remXi}
{\rm The protocritical droplet may change each time the path enters and exits an 
$\cX_i$. There are $\cX_i$'s from which the path can reach $\Boxplus$ without going 
back to $\gate$ and without exceeding $\Gamma\starred$ in energy (see the proof of
\cite{BdHN06}, Theorem 1.4.3, where this is shown for Kawasaki dynamics with one 
type of particle).}
\er

In order to make the above intuition precise, we need some further notation.

\begin{definition}
\label{extnot}
(a) For $F\subset\Z^2$, $\partial^+F$ and $\partial^-F$ are the external, respectively, 
internal boundary of $F$.\\
(b) For $\eta\in\cX$, $\supp(\eta)$ is the set of occupied sites of $\eta$.\\
(c) For $\eta\in\gate\cup\gate_\mathrm{att}$, write $\eta = (\hat{\eta},x)$ with 
$\hat{\eta}\in\proto$ the protocritical droplet and $x\in\Lambda$ the location 
of the free/attached particle of type $\tb$.\\
(d) For $\hat{\eta}\in\proto$, $A(\hat{\eta}) = \{x\in\partial^+\supp(\hat{\eta})
\colon\, H(\hat{\eta},x)<\Gamma\starred\}$ is the set of sites where the free particle 
of type $\tb$ can attach itself to a particle of type $\ta$ in $\partial^-\supp(\eta)$
to form an active bond. Note that $x \in A(\hat{\eta})$ if and only if $\eta = (\hat{\eta},x) 
\in \gate_\mathrm{att}$, and that for every $\eta\in \gate_\mathrm{att}$ either 
$\eta\in\cX_{\boxplus}$ or $\eta\in\cX_i$ for some $i = 1,\ldots,I$.\\
(e) For $\hat{\eta}\in\proto$, let
\begin{equation}
\label{gbsites}
\begin{aligned}
G(\hat{\eta}) &= \{x\in A(\hat{\eta})\colon\,(\hat{\eta},x)\in\cX_{\boxplus}\},\\
B(\hat{\eta}) &= \{x\in A(\hat{\eta})\colon\,\exists\, i=1,\ldots,I\colon\,(\hat{\eta},x)
\in\cX_i\},
\end{aligned}
\end{equation} 
be called the set of good sites, respectively, bad sites. Note that $(\hat{\eta},x)$ may be 
in the same $\cX_i$ for different $x\in B(\hat{\eta})$.\\
(f) For $\hat{\eta}\in\proto$, let 
\begin{equation}
\label{cardbsites}
I(\hat{\eta}) = \{i\in 1,\ldots,I\colon\,\exists\,
x\in B(\hat{\eta})\colon\,(\hat{\eta},x)\in\cX_i\}.
\end{equation} 
Note that $B(\hat{\eta})$ can be partitioned into disjoint sets $B_{1}(\hat{\eta}),\dots,
B_{|I(\hat{\eta})|}(\hat{\eta})$ according to which $\cX_i$ the configuration $(\hat{\eta},x)$ 
belongs to.\\
(g) Write $\CS(\hat{\eta}) = \supp(\hat{\eta}) \cup G(\hat{\eta})$, $\CS^+(\hat{\eta})
=\partial^+\CS(\hat{\eta})$ and $\CS^{++}(\hat{\eta})=\partial^+\CS^+(\hat{\eta})$.
\end{definition}

\noindent
Note that Definitions~\ref{extnot}(c--d) rely on (H3-a), and that $G(\hat{\eta})\neq\emptyset$
for all $\hat{\eta}\in\proto$ by (H3-c) and Lemma~\ref{lemmavalleys}. For the argument below
it is of no relevance whether $B(\hat{\eta})\neq\emptyset$ for some or all $\hat{\eta}\in\proto$.

The following lemma is the analogue of \cite{BdHN06}, Proposition 3.3.4.

\begin{lemma}
\label{red2}
$\Theta \in [\Theta_1,\Theta_2]$ with
\be{Theta12d=2}
\begin{aligned}
\Theta_1 &= [1+o(1)] \sum_{\hat{\eta}\in\proto} \CAPA^{\,\Lambda^+} 
\left(\partial^+\Lambda,\CS(\hat{\eta})\right),\\ 
\Theta_2 &= \sum_{\hat\eta\in\proto} \CAPA^{\,\Lambda^+}
\left(\partial^+\Lambda,\CS^{++}(\hat{\eta})\right),
\end{aligned}
\ee
where
\be {red-3d.2d=2}
\CAPA^{\,\Lambda^+} \left(\partial^+\Lambda,F\right)
= \min_{{g\colon\,\Lambda^+\to [0,1]}\atop
{g|_{\partial^+\Lambda}\equiv 1,\,g|_F\equiv 0}}
\tfrac 12 \sum_{(x,x')\in(\Lambda^+)\starred} [g(x)-g(x')]^2,
\qquad F\subset\Lambda,
\ee
with $(\Lambda^+)\starred=\{(x,y)\colon\,x,y\in\Lambda^+,|x-y|=1\}$, 
and $o(1)$ an error term that tends to zero as $\Lambda\to\Z^2$.
\end{lemma}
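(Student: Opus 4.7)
The plan is to follow the strategy of \cite{BdHN06}, Proposition~3.3.4, adapted to the two-type setting via hypothesis (H3). By (H3-a) and Definition~\ref{extnot}, every configuration in $\gate \cup \gate_\mathrm{att}$ with a given protocritical droplet $\hat\eta \in \proto$ has the form $(\hat\eta, x)$, where $x \in \Lambda \setminus \supp(\hat\eta)$ is the position of the type-$\tb$ particle; the allowed moves of this particle correspond exactly to simple random walk (SRW) steps on $\Lambda^+ \setminus \supp(\hat\eta)$, with annihilation at $\partial^-\Lambda$ interpreted as a step into the phantom boundary $\partial^+\Lambda$. Under this parametrization, the contribution to the Dirichlet form in \eqref{redvp} from edges with both endpoints of the form $(\hat\eta,\cdot)$ equals the SRW Dirichlet form of $g_{\hat\eta}(x) := h(\hat\eta, x)$, with boundary values $g_{\hat\eta} \equiv 1$ on $\partial^+\Lambda$ (from $\cX_\Box$), $g_{\hat\eta} \equiv 0$ on $G(\hat\eta)$ (from $\cX_\Boxplus$), and $g_{\hat\eta}$ piecewise constant on $B(\hat\eta)$ with values in $\{C_i\}$ according to Definition~\ref{extnot}(f).

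For the upper bound $\Theta \leq \Theta_2$, I would exhibit an explicit admissible test function. For each $\hat\eta \in \proto$, let $g_{\hat\eta}$ be the minimizer of \eqref{red-3d.2d=2} with $F = \CS^{++}(\hat\eta)$, and define $h$ on $\cX\starred$ by $h \equiv 1$ on $\cX_\Box$, $h \equiv 0$ on $\cX_\Boxplus$ and on every $\cX_i$ (taking all $C_i = 0$), $h(\hat\eta, x) = g_{\hat\eta}(x)$ for $x \notin \CS^+(\hat\eta)$, and $h \equiv 0$ on the remaining configurations inside the buffer $\CS^{++}(\hat\eta) \setminus \CS(\hat\eta)$. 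The two-layer buffer ensures that every edge of $\cX\starred$ crossing between the bulk SRW region and the droplet/attachment region has both endpoints sent to $0$ by $h$, so that \eqref{red1c} collapses the Dirichlet form of $h$ to $\sum_{\hat\eta \in \proto} \CAPA^{\Lambda^+}(\partial^+\Lambda, \CS^{++}(\hat\eta))$, as desired.

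For the lower bound $\Theta \geq \Theta_1$, I would use a restriction-and-extension argument. Given admissible $(C_i, h)$ for \eqref{redvp}, for each $\hat\eta \in \proto$ extend $g_{\hat\eta}(x) = h(\hat\eta, x)$ to all of $\Lambda^+$ by setting $g_{\hat\eta} \equiv 1$ on $\partial^+\Lambda$ and $g_{\hat\eta} \equiv 0$ on $\CS(\hat\eta) = \supp(\hat\eta) \cup G(\hat\eta)$. Then $g_{\hat\eta}$ is admissible for $\CAPA^{\Lambda^+}(\partial^+\Lambda, \CS(\hat\eta))$, and its SRW Dirichlet form equals the configuration-space contribution from edges inside $\{(\hat\eta,\cdot)\}$ plus a controlled ``fake'' contribution from edges touching $\supp(\hat\eta)$ in the SRW graph. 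Since each edge of $\cX\starred$ is charged to at most one $\hat\eta$ (the protocritical droplet is determined by the configuration), summing over $\hat\eta$ yields $\Theta \geq \sum_{\hat\eta \in \proto} \CAPA^{\Lambda^+}(\partial^+\Lambda, \CS(\hat\eta))$ modulo the fake and overlap corrections, which are absorbed into the prefactor $[1+o(1)]$ as $\Lambda \to \Z^2$.

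The main obstacle is the ``fake edge'' control for the lower bound: the naive fake contribution from sites $x' \in B(\hat\eta)$ is of order $O(|B(\hat\eta)|) = O((L\starred)^2)$ per droplet, while the SRW capacity per droplet scales as $O(1/\log|\Lambda|)$, so a direct comparison is not sharp. Instead one must extend $g_{\hat\eta}$ on $\supp(\hat\eta)$ harmonically rather than by $0$, following the scheme of \cite{BdHN06}, Section~3.3, so that the fake contribution is reduced to a genuine $o(\CAPA)$ correction uniformly in $C_i \in [0,1]$. Once this refinement is in place, summation over $\hat\eta \in \proto$ together with the standard $o(|\Lambda|)$ boundary-effect estimate (for droplets within distance $L\starred$ of $\partial^-\Lambda$ or of each other) completes the proof.
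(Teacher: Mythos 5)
Your upper bound follows the paper's route almost verbatim: you take $C_i=0$, set $h\equiv 1$ on $\cX_\Box$, $h=g_{\hat\eta}(x)$ on configurations $(\hat\eta,x)$ whose free particle is away from the droplet, and $h\equiv 0$ elsewhere, so that the Dirichlet form collapses to $\sum_{\hat\eta\in\proto}\CAPA^{\,\Lambda^+}(\partial^+\Lambda,\CS^{++}(\hat\eta))$. One caveat: your claim that the only charged transitions are simple-random-walk steps of the free particle is exactly where (H3-b) is needed (moves that detach a particle from the protocritical droplet or add a particle to $\Lambda$ must be excluded because they exceed $\Gamma\starred$ and hence are absent from $\cX\starred$); you cite (H3-a) but never (H3-b), so this verification is missing, though easily repaired.

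The genuine gap is in the lower bound. You correctly identify that extending $g_{\hat\eta}$ by $0$ over $\supp(\hat\eta)$ creates fake-edge contributions of order one at the bad sites (where $h=C_i$ can be arbitrary in $[0,1]$), which swamps the capacity scale $1/\log|\Lambda|$; but your proposed repair --- extending $g_{\hat\eta}$ harmonically on $\supp(\hat\eta)$ --- does not close the argument. A harmonic extension is no longer admissible for $\CAPA^{\,\Lambda^+}(\partial^+\Lambda,\CS(\hat\eta))$, since it does not vanish on $\supp(\hat\eta)$, so the chain of inequalities never reaches the quantity appearing in \eqref{Theta12d=2}, and no bound ``uniform in $C_i$'' is obtained for free. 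The paper's proof eliminates the unknown constants altogether: dropping the boundary conditions on the bad sets $B_i(\hat\eta)$ is a legitimate relaxation for a lower bound and yields $\Theta\geq\sum_{\hat\eta\in\proto}\CAPA^{\,\Lambda^+\backslash\supp(\hat\eta)}(\partial^+\Lambda,G(\hat\eta))$ as in \eqref{Theta1bdalt}. The real work is then the capacity comparison \eqref{capawawoo}: by a renewal argument as in \eqref{renewcapa}, together with the a priori bounds of Lemma~\ref{lemmaprbds} and the recurrence of two-dimensional simple random walk, the equilibrium potential of the obstacle problem is at most $C/\log|\Lambda|$ on $B(\hat\eta)$, so reinstating the edges between $\supp(\hat\eta)$ and $B(\hat\eta)$ changes the capacity only by $O([1/\log|\Lambda|]^2)$, which is $o(1)$ relative to the capacity itself (of order $1/\log|\Lambda|$, as established in Step 4). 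This quantitative equilibrium-potential estimate, not a harmonic extension, is the missing ingredient; without it (or an equivalent) the prefactor $[1+o(1)]$ in $\Theta_1$ is not established.
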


\bpr
The variational problem in (\ref{redvp}) decomposes into disjoint
variational problems for the maximally connected components of $\cX\starred$. 
Only those components that contain $\cX_\Box$ or $\cX_\boxplus$ 
contribute, since for the other components the minimum is achieved by 
picking $h$ constant.
	
\medskip\noindent
\underline{$\Theta \geq \Theta_1$}:
A lower bound is obtained from (\ref{redvp}) by removing all 
transitions that do not involve a fixed protocritical droplet 
and a move of the free/attached particle of type $\tb$. This 
removal gives
\be{Theta1bd}
\begin{aligned}
&\Theta \geq \sum_{\hat{\eta}\in\proto}\,\,
\min_{C_i(\hat{\eta}),\,i \in I(\hat{\eta})}\,\,
\min_{{g\colon\Lambda^+ \to [0,1]} \atop {g|_{G(\hat{\eta})}\equiv 0,\,
g|_{B_i(\hat{\eta})} \equiv C_i(\hat{\eta}),\,i \in I(\hat{\eta}),\,
g|_{\partial^+\Lambda} \equiv 1}}\\ 
&\qquad\qquad\qquad 
\tfrac12 \sum_{(x,x') \in [\Lambda^+\backslash\supp(\hat{\eta})]\starred} [g(x)-g(x')]^2.
\end{aligned}
\ee
To see how this bound arises from (\ref{redvp}), pick $h$ in \eqref{redvp} and 
$g$ in \eqref{Theta1bd} such that
\be{hchoicered}
h(\h) = h(\hat{\eta},x)=g(x), \qquad 
\hat{\eta}\in\proto,\,x\in\Lambda^+\backslash\supp(\hat{\eta}),
\ee 
and use that, by Definitions~\ref{extnot}(c--f), for every $\hat{\eta}\in\proto$ 
(recall Lemma~\ref{lemmavalleys})	
\begin{equation}
\label{lbprelim}
\begin{array}{lll}
&(\hat{\eta},x) \in \cX_{\boxplus},  &x \in G(\hat{\eta}),\\
&(\hat{\eta},x) \in \cX_i &x \in B_i(\hat{\eta}),\,i \in I(\hat{\eta}),\\ 
&(\hat{\eta},x) \in \proto \subset \cX_{\Box}, &x\in\partial^+\Lambda.
\end{array}
\end{equation} 
A further lower bound is obtained by removing from the right-hand side of 
\eqref{lbprelim} the boundary condition on the sets $B_i(\hat{\eta})$, $i \in I(\hat{\eta})$. 
This gives 
\be{Theta1bdalt}
\begin{aligned}
\Theta &\geq \sum_{\hat{\eta}\in\proto}\,\,
\min_{{g\colon\Lambda^+ \to [0,1]} \atop {g|_{G(\hat{\eta})}\equiv 0,\,
g|_{\partial^+\Lambda}\equiv 1}}
\tfrac12 \sum_{(x,x') \in [\Lambda^+\backslash\supp(\hat{\eta})]\starred} [g(x)-g(x')]^2\\
&= \sum_{\hat{\eta}\in\proto} \CAPA^{\,\Lambda^+\backslash\supp(\hat{\eta})}
\left(\partial^+\Lambda,G(\hat{\eta})\right),
\end{aligned}
\ee
where the upper index $\Lambda^+\backslash\supp(\hat{\eta})$ refers to the fact that no moves 
in and out of $\supp(\hat{\eta})$ are allowed (i.e., this set acts as an obstacle for the 
free particle). To complete the proof we show that, in the limit as $\Lambda\to\Z^2$,
\begin{equation}
\label{capawawoo}
\begin{aligned}
&\CAPA^{\,\Lambda^+}\left(\partial^+\Lambda,\supp(\hat{\eta}) \cup G(\hat{\eta})\right)
\geq \CAPA^{\,\Lambda^+\backslash\supp(\hat{\eta})}
\left(\partial^+\Lambda,G(\hat{\eta})\right)\\
&\geq \CAPA^{\,\Lambda^+}\left(\partial^+\Lambda,\supp(\hat{\eta}) \cup G(\hat{\eta})\right)
- O([1/\log|\Lambda|]^2).
\end{aligned}
\end{equation}
Since $\CS(\hat{\eta})=\supp(\hat{\eta}) \cup G(\hat{\eta})$ and, as we will show in Step 4 
below, $\CAPA^{\,\Lambda^+}(\partial^+\Lambda,\CS(\hat{\eta}))$ decays like $1/\log|\Lambda|$, 
the lower bound follows. 

Before we prove \eqref{capawawoo}, note that the capacity in the right-hand side of 
\eqref{capawawoo} includes more transitions than the capacity in the left-hand side, namely, 
all transitions from $\supp(\hat{\eta})$ to $B(\hat{\eta})$. Let 
\begin{equation}
g^{\Lambda^+\backslash\supp(\hat{\eta})}_{\partial^+\Lambda,G(\hat{\eta})}(x) 
= \mbox{ equilibrium potential for }
\CAPA^{\,\Lambda^+\backslash\supp(\hat{\eta})}\left(\partial^+\Lambda,G(\hat{\eta})\right)
\mbox{ at } x.
\end{equation}
Below we will show that $g^{\Lambda^+\backslash\supp(\hat{\eta})}_{\partial^+\Lambda,
G(\hat{\eta})}(x) \leq C/\log|\Lambda|$ for all $x\in B(\hat{\eta})$ and some $C<\infty$. 
Since in the Dirichlet form in (\ref{red-3d.2d=2}) the equilibrium potential appears 
squared, the error made by adding to the capacity in the left-hand side of (\ref{capawawoo}) 
the transitions from $\supp(\hat{\eta})$ to $B(\hat{\eta})$ therefore is of order 
$[1/\log|\Lambda|]^{2}$ times $|B(\hat{\eta})|$, which explains how (\ref{capawawoo}) 
arises.

Formally, let $\P^{\hat{\eta}}_{x}$ be the law of the simple random walk that starts at 
$x \in B(\hat{\eta})$ and is forbidden to visit the sites in $\supp(\hat{\eta})$. Let 
$y \in G(\hat{\eta})$. Using a renewal argument similar to the one used in the proof 
of Lemma~\ref{lh*est}, and recalling the probabilistic interpretation of the equilibrium 
potential in (\ref{h*def}) and of the capacity in (\ref{caprep}), we get 
\begin{equation}
\label{renewcapa}
\begin{aligned}
g^{\Lambda^+\backslash\supp(\hat{\eta})}_{\partial^+\Lambda,G(\hat{\eta})}(x) 
& = \P^{\hat{\eta}}_{x} (\tau_{\partial^{+}\Lambda} < \tau_{G(\hat{\eta})}) 
= \frac	{\P^{\hat{\eta}}_{x}(\tau_{\partial^{+}\Lambda} < \tau_{G(\hat{\eta}) \cup x })}
{\P^{\hat{\eta}}_{x}(\tau_{G(\hat{\eta}) \cup \partial^{+}\Lambda} < \tau_{x})} \\	
& \leq \frac{\P^{\hat{\eta}}_{x}(\tau_{\partial^{+}\Lambda} < \tau_{x})}
{\P^{\hat{\eta}}_{x}(\tau_{y} < \tau_{x})}
= \frac{\CAPA^{\,\Lambda^+\backslash\supp(\hat{\eta})}\left(x,\partial^+\Lambda\right)}
{\CAPA^{\,\Lambda^+\backslash\supp(\hat{\eta})}\left(x,y\right)}.
\end{aligned}
\end{equation}
The denominator of (\ref{renewcapa}) can be bounded from below by some $C'>0$ 
that is independent of $x$, $y$ and $\supp(\hat{\eta})$. To see why, pick a path 
from $x$ to $y$ that avoids $\supp(\hat{\eta})$ but stays inside an $L\starred
\times L\starred$ square around $\hat{\eta}$ (recall (H3-a)), and argue as in the 
proof of the lower bound of Lemma~\ref{lemmaprbds}. On the other hand, the numerator 
is bounded from above by $\CAPA^{\,\Lambda^+}(\partial^+\Lambda,G(\hat{\eta}))$, i.e., 
by the capacity of the same sets for a random walk that is not forbidden to visit 
$\supp(\hat{\eta})$, since the Dirichlet problem associated to the latter has the 
same boundary conditions, but includes more transitions. In the proof of Lemma~\ref{capasymp} 
below, we will see that $\CAPA^{\,\Lambda^+}(\partial^+\Lambda,G(\hat{\eta}))$ decays 
like $C''/\log|\Lambda|$ for some $C''<\infty$ (see (\ref{capextrid}--\ref{LB3}) below). 
We therefore conclude that indeed $g^{\supp(\hat{\eta})}_{\partial^+\Lambda,G(\hat{\eta})}(x) 
\leq C/\log|\Lambda|$ for all $x\in B(\hat{\eta})$ with $C = C''/C'$. 

\medskip\noindent
\underline{$\Theta \leq \Theta_2$}:
The upper bound is obtained from (\ref{redvp}) by picking $C_i=0$, $i=1,\ldots,I$, and
\be{hchoicered*}
h(\h) = \left\{\begin{array}{ll}
1      &\mbox{for } \h\in\cX_\Box,\\
g(x)   &\mbox{for } \h=(\hat\h,x)\in\cC^{++},\\
0      &\mbox{for } \h\in\cX\starred\backslash[\cX_\Box\cup\cC^{++}], 
\end{array}
\right.
\ee
where
\be{C++}
\cC^{++} = \big\{\h=(\hat{\eta},x)\colon\,\hat{\eta}\in\proto,
\,x \in \Lambda \backslash \CS^{++}(\hat{\eta})\big\}
\ee
consists of those configurations in $\gate$ for which the free particle is at distance 
$\geq 2$ of the protocritical droplet. The choice in \eqref{hchoicered*} gives
\be{Theta2bd}
\Theta \leq \sum_{\hat{\eta}\in\proto} \CAPA^{\,\Lambda^+} 
\left(\partial^+\Lambda,\CS^{++}(\hat{\eta})\right).
\end{equation}
To see how this upper bound arises, note that:
\begin{itemize}
\item
The choice in \eqref{hchoicered*} satisfies the boundary conditions in (\ref{redvp}) because
(recall (\ref{setsa}--\ref{setsb}))
\be{subC*}
\cC^{++} \subset \gate,\,\,[\cX_\Box\cup\gate] \cap [\cX_\boxplus \cup (\cup_{i=1}^I \cX_i)] 
= \emptyset \quad \Longrightarrow \quad \cX\starred \backslash [\cX_\Box\cup\cC^{++}] \supset 
[\cX_\boxplus \cup (\cup_{i=1}^I \cX_i)].
\ee
\item
By Lemma~\ref{lemmavalleys}, $\proto\subset\cX_\Box$. Therefore the first line of 
\eqref{hchoicered*} implies that $h(\h)=1$ for $\h=(\hat{\eta},x)$ with $\hat{\eta}\in\proto$ 
and $x\in\partial^+\Lambda$, which is consistent with the boundary condition $g|_{\partial^+\Lambda}
\equiv 1$ in (\ref{red-3d.2d=2}).
\item
The third line of \eqref{hchoicered*} implies that $h(\h)=0$ for $\h=(\hat{\eta},x)$ with 
$\hat{\eta}\in\proto$ and $x\in\CS^{++}(\hat{\eta})$, which is consistent with the boundary 
condition $g|_{F} \equiv 0$ in (\ref{red-3d.2d=2}) for $F=\CS^{++}(\hat{\eta})$. 
\end{itemize}
Further note that:
\begin{itemize}
\item
By Definitions~\ref{defdroplets-a}--\ref{defdroplets-b} and (H3-b), the only transitions in 
$\cX\starred$ between $\cX_\Box$ and $\cC^{++}$ are those where a free particle enters 
$\partial^-\Lambda$.
\item 
The only transitions in $\cX\starred$ between $\cC^{++}$ and $\cX\starred\backslash
[\cX_\Box\cup\cC^{++}]$ are those where the free particle moves from distance $2$ to 
distance $1$ of the protocritical droplet. All other transitions either involve a detachment 
of a particle from the protocritical droplet (which raises the number of droplets) or 
an increase in the number of particles in $\Lambda$. By (H3-b), such transitions lead to 
energy $>\Gamma\starred$, which is not possible in $\cX\starred$.
\item 
There are no transitions between $\cX_\Box$ and $\cX\starred\backslash[\cX_\Box\cup\cC^{++}]$.
\end{itemize}
The latter show that (\ref{red-3d.2d=2}) includes all the transitions in (\ref{redvp}).
\epr

%%%%%%%%%%%%%%%%

\subsubsection{Step 4: Sharp asymptotics for capacities of simple random walk}
\label{Step4}

With Lemma~\ref{red2} we have obtained upper and lower bounds on $\Theta$
in terms of capacities for simple random walk on $\Z^2$ of the pairs of sets 
$\partial^+\Lambda$ and $\CS(\hat{\eta})$, respectively, $\CS^{++}(\hat{\eta})$, 
with $\hat{\eta}$ summed over $\proto$. The transition rates of the simple random 
walk are 1 between neighboring pairs of sites. Lemma~\ref{capasymp} below, which 
is the analogue of \cite{BdHN06}, Lemma 3.4.1, shows that, in the limit as $\Lambda
\to\Z^2$, each of these capacities has the same asymptotic behavior, namely, 
$[1+o(1)]\,4\pi/\log|\Lambda|$, \emph{irrespective} of the location and shape 
of the protocritical droplet (provided it is not too close to $\partial^+\Lambda$, 
which is a negligible fraction of the possible locations). In what follows we 
pretend that $\Lambda=B_M=[-M,+M]^2\cap\Z^2$ for some $M\in\N$ large enough. It 
is straightforward to extend the proof to other shapes of $\Lambda$ (see van den 
Berg~\cite{vdB05} for relevant estimates).  

\bl{capasymp}
For any $\varepsilon>0$,
\be{capasymp*}
\begin{aligned}
&\lim_{M\to\infty}\,
\max_{{\hat{\eta}\in\proto} \atop {d(\partial^+B_M,\supp(\hat{\eta}))\geq\varepsilon M}}
\left|\frac{\log M}{2\pi}\,\CAPA^{\,B_M^+}\big(\partial^+B_M,\CS(\hat{\eta})\big)
-1\right| = 0,\\
&\lim_{M\to\infty}\,
\max_{{\hat{\eta}\in\proto} \atop {d(\partial^+B_M,\supp(\hat{\eta}))\geq\varepsilon M}}
\left|\frac{\log M}{2\pi}\,\CAPA^{\,B_M^+}\big(\partial^+B_M,\CS^{++}(\hat{\eta})\big)
-1\right| = 0,
\end{aligned}
\ee
where $d(\partial^+B_M,\supp(\hat{\eta}))=\min\{|x-y|\colon x\in\partial^+B_M,\,
y\in\supp(\hat{\eta})\}$.
\el
 
\bpr 
We only prove the first line of (\ref{capasymp*}). The proof of the second line is 
similar. 

\medskip\noindent
\underline{Lower bound}:
For $\hat{\eta}\in\proto$, let $y\in \CS(\hat{\eta}) \subset B_M$ denote the site closest 
to the center of $\CS(\hat{\eta})$. The capacity decreases when we enlarge the set over 
which the Dirichlet form is minimized. Therefore we have
\be{LB1}
\begin{aligned}
&\CAPA^{\,B_M^+}(\partial^+B_M,\CS(\hat{\eta}))
\geq \CAPA^{\,B_M^+}(\partial^+B_M,y)\\
&\qquad = \CAPA^{\,(B_M-y)^+}(\partial^+(B_M-y),0) 
\geq \CAPA^{\,B_{2M}^+}(\partial^+B_{2M},0),
\end{aligned}
\ee
where the last equality uses that $(B_M-y)^+ \subset B_{2M}^+$ because $y\in B_M$. By the 
analogue of (\ref{caprep}--\ref{capsym}) for simple random walk, we have (compare 
(\ref{red-3d.2d=2}) with (\ref{Diri}--\ref{capa}))
\be{capextrid}
\CAPA^{B_{2M}^+}(\partial^+B_{2M},0) = \CAPA^{B_{2M}^+}(0,\partial^+B_{2M})
= 4\, \P_0(\t_{\partial^+B_{2M}}<\t_0),
\ee
where $\P_0$ is the law on path space of the \emph{discrete-time} simple random 
walk on $\Z^2$ starting at 0. According to R\'ev\'esz \cite{R90}, Lemma 22.1, we 
have 
\be{LB3}
\P_0(\t_{\partial^+B_{2M}}<\t_0) \sim \frac{\pi}{2\log (2M)}, \qquad M\to\infty.
\ee 
Combining (\ref{LB1}--\ref{LB3}), we get the desired lower bound. 

\medskip\noindent
\underline{Upper bound}: 
As in (\ref{LB1}), we have
\be{UB4}
\begin{aligned}
&\CAPA^{\,B_M^+}(\partial^+B_M,\CS(\hat{\eta}))
\leq \CAPA^{\,B_M^+}(\partial^+B_M,S_y(\hat{\eta}))\\
&\qquad = \CAPA^{\,(B_M-y)^+}(\partial^+(B_M-y),S_y(\hat{\eta})-y)
\leq \CAPA^{\,B_{\varepsilon M}^+}(\partial^+B_{\varepsilon M},S_y(\hat{\eta})-y),
\end{aligned}
\ee
where $S_y(\hat{\eta})$ is the smallest square centered at $y$ containing $\CS(\hat{\eta})$,
and the last inequality uses that $(B_M-y)^+ \supset B_{\varepsilon M}^+$ when 
$d(\partial^+B_M,\supp(\hat{\eta}))\geq\varepsilon M$. By the recurrence of simple random walk, 
we have 
\begin{equation}
\label{UB5}
\CAPA^{\,B_{\varepsilon M}^+}(\partial^+B_{\varepsilon M},S_y(\hat{\eta})-y)
\sim \CAPA^{\,B_{\varepsilon M}^+}(\partial^+B_{\varepsilon M},0),
\qquad M \to\infty.
\end{equation}
Combining (\ref{LB3}--\ref{UB5}), we get the desired upper bound.
\end{proof}

Combining Lemmas~\ref{red2}--\ref{capasymp}, we find that $\Theta\in [\Theta_1,\Theta_2]$ 
with
\begin{equation}
\label{thetaiden}
\begin{aligned}
\Theta_1 &= O(\varepsilon M) + \sum_{{\hat{\eta}\in\proto} \atop 
{d(\partial^+B_M,\supp(\hat{\eta}))\geq\varepsilon M}}
\CAPA^{\,B_M^+}(\partial^+B_M,\CS(\hat{\eta}))\\
&= O(\varepsilon M) + \frac{2\pi}{\log M}\,\big|\big\{\hat{\eta}\in\proto\colon\, 
d(\partial^+B_M,\supp(\hat{\eta}))\geq\varepsilon M\big\}\big|\,[1+o(1)]\\
&= O(\varepsilon M) + \frac{2\pi}{\log M}\,N\starred\,[2(1-\varepsilon)M]^2\,[1+o(1)],
\end{aligned}
\end{equation}
and the same expression for $\Theta_2$, where we use that (recall (H3-a))
\begin{equation}
\CAPA^{\,B_M^+}\big(\partial^+B_M,\CS(\hat{\eta})\big)
\leq \CAPA^{\,B_M^+}\big(B_M^+\backslash\CS(\hat{\eta}),\CS(\hat{\eta})\big) 
= \tfrac12 |\CS^+(\hat{\eta})| \leq \tfrac12 (L\starred+2)^2, 
\end{equation}
and we recall from Definition~\ref{defdroplets-a}(b) that $N\starred$ is the cardinality 
of $\proto$ modulo shifts of the protocritical droplets. Let $M\to\infty$ followed by 
$\varepsilon\da 0$, to conclude that $\Theta \sim 2\pi N\starred (2M)^2/\log M$. Since 
$|\Lambda|=(2M+1)^2$ and $K=1/\Theta$, this proves (\ref{Kasymp}) in Theorem~\ref{tnucltime}.

%%%%%%%%%%%%%%%%%%%%%%%%%%%%

\subsection{Proof of Theorem~\ref{texp}}
\label{S2.4}

\bpr
The proof is immediate from Lemma~\ref{lemcapameta} and Bovier, Eckhoff, Gayrard and 
Klein~\cite{BEGK02}, Theorem 1.3(iv) and relies on (H1)--(H2) only. 
The main idea is that, each time the dynamics 
reaches the critical droplet but ``fails to go over the hill and falls back into the 
valley around $\Box$'', it has a probability exponentially close to 1 to return to 
$\Box$ (because, by (H2), $\Box$ lies at the bottom of its valley (recall \eqref{h*def} 
and \eqref{h*triv})) and to ``start from scratch''. Thus, the dynamics manages to grow a 
critical droplet and go over the hill to $\boxplus$ only after a number of  unsuccessful attempts 
that tends to infinity as $\beta\to\infty$, each having a small probability that
tends to zero as $\beta\to\infty$. Consequently, the time to go over the hill is
exponentially distributed on the scale of its average.
\epr

%%%%%%%%%%%%%%%%%%%%%%%%%%%%

\subsection{Proof of Theorem~\ref{tgate}}
\label{S2.5}

\begin{proof}
(a) The proof relies on (H1)--(H2) only.
We will show that there exist $C<\infty$ and $\d>0$ such that
\begin{equation}
\label{SVLTGeneralgate-1.0}
\P_\Box\left(\t_{\gate}<\t_\boxplus \mid \t_\boxplus<\t_\Box\right)
\geq 1-Ce^{-\d\b}, \qquad \forall\,\beta \in (0,\infty),
\end{equation}
which implies the claim.

By (\ref{caprep}), $\CAPA_\b(\Box,\boxplus) = \mu_\b(\Box)\,c_\b(\Box,
\cX\backslash\Box) \P_\Box(\tau_\boxplus<\tau_\Box)$ with $\mu_\b(\Box)=1/Z_\b$. 
From the lower bound in Lemma~\ref{lemmaprbds} it therefore follows that
\begin{equation}
\label{SVLTGenerallbcross}
\P_\Box(\tau_\boxplus<\tau_\Box) \geq C_1e^{-\Gamma\starred\b}\,
\frac{1}{c_\b(\Box,\cX\backslash\Box)}.
\end{equation} 
We will show that
\begin{equation}
\label{SVLTGeneralgate-1.1}
\P_\Box\left(\{\t_{\gate}<\t_\boxplus\}^c,\,\t_\boxplus<\t_\Box\right)
\leq C_2e^{-(\Gamma\starred+\d)\b}\,\frac{1}{c_\b(\Box,\cX\backslash\Box)}.
\end{equation}
Combining (\ref{SVLTGenerallbcross}--\ref{SVLTGeneralgate-1.1}), we get 
(\ref{SVLTGeneralgate-1.0}) with $C=C_2/C_1$. 

By Definitions~\ref{def2}(f) and \ref{def3}(d), any path from $\Box$ to $\boxplus$ 
that does not pass through $\gate$ must hit a configuration $\h$ with $ H(\h)>\Gamma\starred$. 
Therefore there exists a set $\cS$, with $H(\h)\geq \Gamma\starred+\d$ for all $\h\in\cS$ 
and some $\d>0$, such that
\begin{equation}
\label{SVLTGeneralgate-1.2}
\P_\Box\left(\{\t_{\gate}<\t_\boxplus\}^c,\,\t_\boxplus<\t_\Box\right)
\leq \P_\Box\left(\t_\cS<\t_\Box\right).
\end{equation}
Now estimate, with the help of reversibility,
\begin{equation}
\label{SVLTGeneralgate-1.3}
\begin{aligned}
\P_\Box\left(\t_\cS<\t_\Box\right)
&\leq \sum_{\h\in \cS} \P_\Box\left(\t_\h<\t_\Box\right)\\
&=\sum_{\h\in \cS} \frac{\mu_\b(\h)c_\b(\h,\cX\backslash\h)}
{\mu_\b(\Box)c_\b(\Box,\cX\backslash\Box)}\,
\P_\h\left(\t_\Box<\t_\h\right)\\ 
&\leq \frac{1}{c_\b(\Box,\cX\backslash\Box)}\sum_{\h\in \cS} 
|\{\h'\in\cX\backslash\h\colon\,\h\sim\h'\}|\,e^{-\b H(\h)}\\
&\leq \frac{1}{c_\b(\Box,\cX\backslash\Box)}\,C_2\,e^{-(\Gamma\starred+\d)\b}
\end{aligned}
\end{equation}
with $C_2=|\{(\h,\h')\in \cS\times\cX\backslash\h\colon\,\h\sim\h'\}|$,
where we use that $c_\b(\h,\h')\leq 1$. Combine 
(\ref{SVLTGeneralgate-1.2}--\ref{SVLTGeneralgate-1.3}) to get the claim in 
(\ref{SVLTGeneralgate-1.1}).

\medskip\noindent
(b) The proof relies on (H1) and (H3).
Write 
\begin{equation}
\label{SVLTGenerallbb1}
\P_\Box\big(\eta_{\t_{\entgate}} = \eta \mid \t_{\entgate}<\t_\Box\big)
=\frac{ \P_\Box\big(\eta_{\t_{\entgate}}=\eta,\,\t_{\entgate}<\t_\Box\big) }
{ \P_\Box\big(\t_{\entgate}<\t_\Box\big) },
\qquad \h\in\entgate.
\end{equation}
By reversibility,
\begin{equation}
\label{SVLTGeneralgate-2.1}
\begin{aligned}
\P_\Box\big(\eta_{\t_{\entgate}}=\eta,\,\t_{\entgate}<\t_\Box\big)
&=\frac{ \mu_\b(\eta)c_\b(\h,\cX\backslash\h)}
{\mu_\b(\Box)c_\b(\Box,\cX\backslash\Box) }\,
\P_\eta\big(\t_\Box<\t_{\entgate}\big)\\
&= e^{-\Gamma\starred\b}\,\frac{c_\b(\h,\cX\backslash\h)}
{c_\b(\Box,\cX\backslash\Box)}\,
\,\P_\eta\big(\t_\Box<\t_{\entgate}\big),
\qquad \h\in\entgate.
\end{aligned}
\end{equation}
Moreover (recall (\ref{h*def}--\ref{soleq})),
\begin{equation}
\label{SVLTGeneralequimeas}
\P_\eta\big(\t_\Box<\t_{\entgate}\big)
=\sum_{{\eta'\in\cX\backslash\entgate} \atop {\h\sim\h'}} 
\frac{c_\b(\eta,\eta')}{c_\b(\h,\cX\backslash\h)}\,
h\starred_{\Box,\entgate}(\eta'),
\qquad \h\in\entgate,
\end{equation}
where 
\begin{equation}
\label{SVLTGeneralh*rels}
h\starred_{\Box,\entgate}(\h')
= \left\{\begin{array}{ll}
0 &\mbox{if } \h'\in\entgate,\\
1 &\mbox{if } \h'=\Box,\\
\P_{\h'}(\t_\Box<\t_{\entgate}) &\mbox{otherwise}.
\end{array}
\right.
\end{equation} 
Because $\proto\subset\cX_\Box$ by Lemma~\ref{lemmavalleys} and $\entgate\subset
\cG(\Box,\boxplus)$ by Definition~\ref{defdroplets-a}(a), for all $\h'\in\proto$
we have $\comlev(\h',\entgate)- \comlev(\h',\Box) = \Gamma\starred - \comlev(\h',\Box)
\geq \delta>0$. Therefore, as in the proof of Lemma~\ref{lh*est}, it follows that
\begin{equation}
\label{SVLTGeneralbdvar1}
\min_{\h'\in\proto} h\starred_{\Box,\entgate}(\h')
\geq 1-Ce^{-\d\b},
\end{equation}
Moreover, letting $\bar{\cC}\starred$ be the set of configurations that can be reached 
from $\entgate$ via an allowed move that does not return to $\proto$, we have  
\begin{equation}
\label{SVLTGeneralbdvar2}
\max_{\h'\in\bar{\cC}\starred} h\starred_{\Box,\entgate}(\h') \leq Ce^{-\d\b}.
\end{equation}
Indeed, $h\starred_{\Box,\entgate}(\h')=0$ for $\h'\in\entgate$, while we have
the following:
\begin{equation}
\label{trpr}
\text{Any path from } \bar{\cC}\starred\backslash\entgate \text{ to } \Box
\text{ that avoids } \entgate \text{ must reach an energy level } >\Gamma\starred.
\end{equation}
To obtain \eqref{SVLTGeneralbdvar2} from \eqref{trpr}, we can do an estimate similar 
to (\ref{est1}--\ref{est1a}) for $\h'\in\bar{\cC}\starred\backslash\entgate$.

To prove \eqref{trpr} we argue as follows. Let $\zeta\in\bar{\cC}\starred$, and 
let $\eta$ be the configuration in $\entgate$ from which $\zeta$ is obtained in a single 
transition. If $\zeta \in \entgate$, then any path from $\zeta$ to $\Box$ already starts 
from $\entgate$ and there is nothing to prove. Therefore, let $\zeta\in\bar{\cC}\starred
\backslash \entgate$. Note that, by (H3-a), $\eta$ consists of a single (protocritical) 
droplet in $\Lambda^{-}$ plus a particle of type $\tb$ in $\partial^-\Lambda$. Recalling 
that particles in $\partial^-\Lambda$ do not interact with other particles, we see that 
any configuration obtained from $\eta$ by detaching a particle from the (protocritical) 
droplet increases the number of droplets and, by (H3-b), raises the energy above $\Gamma
\starred$. Therefore, $\zeta$ can only be obtained from $\eta$ by moving the free particle 
from $\partial^-\Lambda$ to $\Lambda^-$. Only two cases are possible: either $\zeta\in
\gate_{\mathrm{att}}$ or $\zeta \in \gate \backslash \entgate$. In the former case, the 
claim follows via Lemma~\ref{lbacktrack}. In the latter case, we must show that if there 
is a path $\omega\colon\,\zeta\to\Box$ that avoids $\entgate$ such that $\max_{\sigma\in\omega} 
H(\sigma) \leq \Gamma\starred$, then a contradiction occurs. 

Indeed, if $\omega$ is such a path, then the reversed path $\omega'$ is a path from 
$\Box\to\zeta$ such that $\max_{\sigma\in\omega'} H(\sigma) \leq \Gamma\starred$. But
$\omega'$ can be extended by the single move from $\zeta$ to $\eta$ to obtain a path 
$\omega''\colon\,\Box \to \eta$ such that $\max_{\sigma\in\omega''} H(\sigma) \leq 
\Gamma\starred$. Moreover, since $\eta\in\entgate$, there exists a path $\gamma\colon\,
\eta\to\boxplus$ such that $\max_{\sigma\in\gamma} H(\sigma) \leq \Gamma\starred$. But 
then the path obtained by joining $\omega''$ and $\gamma$ is a path in $(\Box\to
\boxplus)_{\mathrm{opt}}$ such that the configuration $\zeta$  visited just before 
$\eta\in\entgate$ belongs to $\gate \backslash \entgate \subset \gate$. However, by 
Definitions~\ref{defdroplets-a}--\ref{defdroplets-b}, this implies that $\zeta\in\proto$,
which is impossible because $\proto \cap \gate = \emptyset$.  

The estimates in (\ref{SVLTGeneralbdvar1}--\ref{SVLTGeneralbdvar2}) can be used as follows.
By restricting the sum in (\ref{SVLTGeneralequimeas}) to $\h'\in\proto$ and inserting 
(\ref{SVLTGeneralbdvar1}), we get
\begin{equation}
\label{SVLTGeneralgate-3.1}
\P_\h\big(\t_\Box<\t_{\entgate}\big)
\geq (1-Ce^{-\d\b})\,\frac{c_\b(\h,\proto)}{c_\b(\h,\cX\backslash\h)},
\qquad \h\in\entgate.
\end{equation}
On the other hand, by inserting (\ref{SVLTGeneralbdvar2}), we get
\begin{equation}
\label{SVLTGeneraltriv-upper}
\P_\h\big(\t_\Box<\t_{\entgate}\big)
\leq \frac{c_\b(\h,\proto)}{c_\b(\h,\cX\backslash\h)} 
+ Ce^{-\d\b}|\bar{\cC}\starred|,
\qquad \h\in\entgate.
\end{equation}
Because $H(\proto)<H(\entgate)=\Gamma\starred$, we have
\begin{equation}
\label{countprotcritcross}
c_\b(\h,\proto) = \sum_{\h'\in\proto} c_\b(\h,\h') 
= |\{\h'\in\proto\colon\,\h\sim\h'\}|, \qquad \h\in\entgate, 
\end{equation}
and, since $c_\b(\h,\cX\backslash\h)\leq |\cX|$, it follows that $\h \mapsto 
c_\b(\h,\proto)/c_\b(\h,\cX\backslash\h)$ is bounded from below. Combine this 
observation with (\ref{SVLTGeneralgate-3.1}--\ref{SVLTGeneraltriv-upper}), to get 
\begin{equation}
\label{SVLTGeneraltriv-asymp}
\P_\h\big(\t_\Box<\t_{\entgate}\big)
= [1+O(e^{-\d\b})]\,\frac{c_\b(\h,\proto)}{c_\b(\h,\cX\backslash\h)},
\qquad \h\in\entgate.
\end{equation}
Combining this in turn with (\ref{SVLTGenerallbb1}--\ref{SVLTGeneralgate-2.1}), we 
arrive at
\begin{equation}
\label{SVLTGeneraltrivfinal}
\begin{aligned}
\P_\Box\big(\eta_{\t_{\entgate}}=\eta \mid \t_{\entgate}<\t_\Box\big)
&= \frac{c_\b(\h,\cX\backslash\h)\,\P_\h(\t_\Box<\t_{\entgate})}
{\sum_{\h'\in\entgate} c_\b(\h',\cX\backslash\h')\,\P_{\h'}(\t_\Box<\t_{\entgate})}\\
&= [1+O(e^{-\d\b})]\,\frac{c_\b(\h,\proto)}{\sum_{\h'\in\entgate}
c_\b(\h',\proto)}, \qquad \h\in\entgate.
\end{aligned}
\end{equation}
Finally, each site in $\partial^-\Lambda$ has one edge towards $\partial^+\Lambda$ 
and hence, by (\ref{countprotcritcross}), $\eta \mapsto c_\b(\h,\proto)$ is constant 
on $\entgate$. Together with (\ref{SVLTGeneraltrivfinal}) this proves the claim.
\end{proof}

%%%%%%%%%%%%% REFERENCES %%%%%%%%%%%%%%%%%%%%%%%%%%%%%%%%%%%%%%%%%%%%%%%%%%%%%%%%%

\end{document}